\renewcommand{\Im}{\operatorname{Im}}
\newcommand{\sgn}{\operatorname{sgn}}
\newcommand\encircle[1]{
	\text{
  		\tikz[baseline=(X.base)] 
    		\node (X) [draw, shape=circle, inner sep=0] {${\strut #1}$};
    	}
 }
\newtheorem{theorem}{Theorem}
\theoremstyle{plain}
\newtheorem{algorithm}{Algorithm}
\newtheorem{claim}[theorem]{Claim}
\newtheorem{corollary}[theorem]{Corollary}
\newtheorem{definition}[theorem]{Definition}
\newtheorem{example}{Example}
\newtheorem{lemma}[theorem]{Lemma}
\newtheorem{proposition}[theorem]{Proposition}
\newtheorem{remark}[theorem]{Remark}
\numberwithin{equation}{section}
\numberwithin{theorem}{section} 
\begin{document}
\title[$H^{1}$ Unconditional Uniqueness for $\mathbb{T}^{4}$ Cubic NLS]{The
Unconditional Uniqueness for the Energy-critical Nonlinear Schr\"{o}dinger
Equation on $\mathbb{T}^{4}$}
\author{Xuwen Chen}
\address{Department of Mathematics, University of Rochester, Rochester, NY
14627}
\email{chenxuwen@math.umd.edu}
\urladdr{http://www.math.rochester.edu/people/faculty/xchen84/}
\author{Justin Holmer}
\address{Department of Mathematics, Brown University, 151 Thayer Street,
Providence, RI 02912}
\email{justin$\underline{\;\,}$holmer@brown.edu}
\urladdr{http://www.math.brown.edu/\symbol{126}holmer/}
\date{05/20/2020}
\subjclass[2010]{Primary 35Q55, 35A02, 81V70; Secondary 35A23, 35B45, 81Q05.}
\keywords{Energy-critical NLS, Gross-Pitaevskii Hierarchy,
Klainerman-Machedon Board Game, Multilinear Estimates}

\begin{abstract}
We consider the $\mathbb{T}^{4}$ cubic NLS which is energy-critical. We
study the unconditional uniqueness of solutions to the NLS via the cubic
Gross-Pitaevskii hierarchy, an uncommon method, which does not require the
existence of a solution in the Strichartz type spaces. We prove $U$-$V$
multilinear estimates to replace the previously used Sobolev multilinear
estimates, which fail on $\mathbb{T}^{4}$. To incorporate the weaker
estimates, we work out new combinatorics from scratch and compute, for the
first time, the time integration limits, in the recombined Duhamel-Born
expansion. The new combinatorics and the $U$-$V$ estimates then seamlessly
conclude the $H^{1}$ unconditional uniqueness for the NLS under the infinite
hierarchy framework. This work establishes a unified scheme to prove $H^{1}$
uniqueness for the $\mathbb{R}^{3}/\mathbb{R}^{4}/\mathbb{T}^{3}/\mathbb{T}%
^{4}$ energy-critical Gross-Pitaevskii hierarchies and thus the
corresponding NLS.
\end{abstract}

\maketitle
\tableofcontents

\section{Introduction}

The cubic nonlinear Schr\"{o}dinger equation (NLS) in four dimensions 
\begin{eqnarray}
i\partial _{t}u &=&-\Delta u\pm \left\vert u\right\vert ^{2}u\text{ in }%
\mathbb{R}\times \Lambda   \label{NLS:T^4 cubic NLS} \\
u(0,x) &=&u_{0},  \notag
\end{eqnarray}%
where $\Lambda =\mathbb{R}^{4}$ or $\mathbb{T}^{4}$, is called
energy-critical as it is invariant under the $\dot{H}^{1}$ scaling 
\begin{equation*}
u(t,x)\mapsto u_{\lambda }(t,x)=\frac{1}{\lambda }u(\frac{t}{\lambda ^{2}},%
\frac{x}{\lambda })
\end{equation*}%
if $\Lambda =\mathbb{R}^{4}$. The large datum global well-posedness of the
defocusing case of (\ref{NLS:T^4 cubic NLS}), was first proved for $\Lambda =%
\mathbb{R}^{4}$ in \cite{RV} after the breakthrough on the defocusing $%
\mathbb{R}^{3}$ quintic problem \cite{CKSTT}. After that, the
energy-critical defocusing $\mathbb{T}^{3}$ quintic problem's global
well-posedness is settled in \cite{HTT,IP}, by partially invoking the $%
\mathbb{R}^{3}$ result \cite{CKSTT}. Such a problem for (\ref{NLS:T^4 cubic
NLS}) when $\Lambda =\mathbb{T}^{4}$ is then subsequently proved in \cite%
{HTT1,KV,Y}. The goal of this paper is to establish the $H^{1}$
unconditional uniqueness for (\ref{NLS:T^4 cubic NLS}) on $\mathbb{T}^{4}$.

\begin{theorem}
\label{THM:MainUniquenessTHM}There is at most one $C_{[0,T]}^{0}H_{x}^{1}%
\cap \dot{C}_{[0,T]}^{1}H_{x}^{-1}$ solution\footnote{%
A $C_{[0,T]}^{0}H_{x}^{1}$ distributional solution is automatically a $%
C_{[0,T]}^{0}H_{x}^{1}\cap \dot{C}_{[0,T]}^{1}H_{x}^{-1}$ solution. We wrote
the latter here as it is a more direct space for (\ref{NLS:T^4 cubic NLS}).}
to (\ref{NLS:T^4 cubic NLS}) on $\mathbb{T}^{4}$.
\end{theorem}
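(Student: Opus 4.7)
\emph{Reduction to the GP hierarchy.} The plan is to lift two candidate solutions $u,v \in C^0_{[0,T]}H^1_x$ to density-matrix form and work inside the infinite cubic Gross-Pitaevskii (GP) hierarchy, whose virtue for unconditional uniqueness is that it requires only $H^1$ control and never places $u,v$ in any Strichartz-type space. Set $\gamma_k^u(t,\vec x_k,\vec x_k') = \prod_{j=1}^k u(t,x_j)\overline{u(t,x_j')}$ and define $\gamma_k^v$ analogously; both factorized sequences then solve the hierarchy
\[
i\partial_t \gamma_k + \left(\Delta_{\vec x_k} - \Delta_{\vec x_k'}\right)\gamma_k = \pm B_{k+1}\gamma_{k+1},
\]
with the usual contraction operator $B_{k+1} = \sum_{j=1}^k B_{j,k+1}$. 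The differences $\delta_k \defeq \gamma_k^u - \gamma_k^v$ satisfy the same linear hierarchy with $\delta_k(0)=0$; showing $\delta_1\equiv 0$ forces $\gamma_1^u = \gamma_1^v$, after which a standard factorization argument (pointwise ratio plus a continuity/phase check against the NLS and the common initial datum $u_0$) recovers $u=v$.

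\emph{Duhamel-Born expansion and board game.} I would iterate Duhamel's formula $n$ times on $\delta_k$, producing a sum of $k(k+1)\cdots(k+n-1)$ spacetime integrals of the form $\int U^{(k)}B_{k+1}U^{(k+1)}B_{k+2}\cdots\,\delta_{k+n}$ over the simplex $0 \le t_{k+n-1} \le \cdots \le t_k \le t$. The Klainerman--Machedon board game then reorganizes these $\sim n!$ summands into at most $C^n$ equivalence classes, each associated to an ordered reference collision history. On $\mathbb{R}^3$ or $\mathbb{T}^3$ one typically closes the estimate by applying a Sobolev-type collapsing bound of the form $\|B_{j,k+1}e^{i(t-s)\Delta}\gamma\|_{L^1_s H^1_x} \lesssim \|\gamma\|_{H^1}$, but on $\mathbb{T}^4$ any such estimate is scale-forbidden and fails, so a new multilinear tool is required.

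\emph{$U$-$V$ estimates, the main obstacle, and conclusion.} I would substitute in the $U^2_\Delta$-$V^2_\Delta$ multilinear estimates established earlier in the paper, which recover a scale-critical Hadac--Herr--Koch bound at each collapsing vertex on $\mathbb{T}^4$. The principal obstacle --- and where the genuinely new work concentrates --- is that $U^p$/$V^p$ norms are defined relative to a \emph{fixed} time interval, while the Duhamel-Born expansion produces nested, variable integration ranges $[0,t_{k+j}]$ that differ from term to term within a board-game class; a naive global $U$-$V$ bound destroys the smallness in $T$ that is needed for summability in $n$. The remedy is to redo the board-game combinatorics from scratch while explicitly tracking the time-integration limits at every vertex, so that the $U$-$V$ estimate can be applied locally on the correct subinterval and each reorganized term is bounded by $(CT^\alpha)^n$ with $C=C(\|u\|_{H^1}+\|v\|_{H^1})$ and some $\alpha>0$. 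Uniform $H^1$ control on $u,v$ provides the requisite uniform bound on the tail $\delta_{k+n}$, so the expansion converges on a short interval $[0,T_*]$, forcing $\delta_1 \equiv 0$ there; a bootstrap in $T_*$ then upgrades uniqueness to all of $[0,T]$.
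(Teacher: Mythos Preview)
Your outline correctly identifies the architecture (lift to GP hierarchy, Duhamel--Born expand, reorganize via a board game, estimate each class with $U$-$V$ multilinear bounds) and correctly diagnoses the central obstacle: $U$-$V$ norms are tied to a fixed interval while the iterated Duhamel produces nested, term-dependent time ranges, so the combinatorics must be reworked to produce integration domains on which the $U$-$V$ estimates actually apply. This matches the paper's program.

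There is, however, a genuine gap in your smallness mechanism. You assert that after the new combinatorics each reorganized term is bounded by $(CT^\alpha)^n$ for some $\alpha>0$. In the energy-critical setting this is false as stated: the scale-invariant $U$-$V$ trilinear estimate on $\mathbb{T}^4$ (the paper's estimate \eqref{eqn:T4UVTrilinearEstimateHigh}/\eqref{eqn:H-1-T4TrilinearEstimateWithoutGain}) carries \emph{no} power of $T$. A positive $T$-power is available only for the frequency-localized piece $P_{\le M_0}$, and it comes paired with a compensating $M_0^{3/5}$ loss (the paper's \eqref{eqn:T4UVTrilinearEstimateLow}). What actually closes the argument is a frequency splitting: at each ``unclogged'' coupling one gains a factor of the form
\[
T^{1/7}M_0^{3/5}\|P_{\le M_0}\phi\|_{H^1}\;+\;\|P_{>M_0}\phi\|_{H^1},
\]
and the second summand is made small not by shrinking $T$ but by invoking the \emph{uniform-in-time frequency localization} (UTFL) property of $C^0_tH^1_x$ solutions (the paper's Lemma~\ref{Cor:UTFLforNLS}), which guarantees that for any $\varepsilon>0$ there is $M_0$ with $\sup_t\|P_{>M_0}u(t)\|_{H^1}\le\varepsilon$. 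One first fixes $\varepsilon$ small depending only on $\|u\|_{L^\infty_tH^1}$, then chooses $M_0$ via UTFL, and only then chooses $T$ small to beat $M_0^{3/5}$. Without this two-parameter mechanism there is no smallness to iterate, and your bootstrap in $T_*$ cannot start. You should also note that the reworked board game in the paper does more than track integration limits: it introduces additional ``wild'' moves that further combine signed upper-echelon classes into \emph{reference} classes whose time domains (the $T_R$ of Proposition~\ref{Prop:ref form}) decouple the $x$- and $x'$-sides of the density; without this extra combination the $U$-$V$ estimate still cannot be applied because the $dt_j$ integrals on the two sides remain entangled (cf.\ the discussion of $I_1$ vs.\ $I_1+I_2$ in \S\ref{Sec:WhyWeNeedExtKM}).
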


The unconditional uniqueness problems, even in the $H^{1}$-critical setting,
are often overlooked as solving them in $\mathbb{R}^{n}$ after proving the
well-posedness is relatively simple.\footnote{%
See, for example, \cite[\S 16]{CKSTT}.} For NLS on $\mathbb{T}^{n}$, such
problems are delicate as estimates on $\mathbb{T}^{n}$, especially the $%
\mathbb{T}^{n}$ Strichartz estimates, are weaker than their $\mathbb{R}^{n}$
counterparts. For example, for the $\mathbb{R}^{n}$ case, one can easily use
the existence of a better solution in Strichartz spaces to yield the
unconditional uniqueness. But such a technique does not work well in the $%
\mathbb{T}^{n}$ case. In fact, Theorem \ref{THM:MainUniquenessTHM} for the $%
\mathbb{T}^{3}$ quintic case at $H^{1}$ regularity was not known until
recently \cite{CJInvent}.

On the other hand, away from answering the original mathematical problem%
\footnote{%
See a discussion after Lemma \ref{Lem:R4TrilinearWithFreqLocal} of this
problem using $\mathbb{T}^{3}$ as an example.} that there could be multiple
solutions coming from different spaces in which (\ref{NLS:T^4 cubic NLS}) is
wellposed, the unconditional uniqueness problems on $\mathbb{T}^{n}$ have
practical applications. An example is the control problem for the
Lugiato--Lefever system, first formulated in \cite{LL}, which could be
considered as a NLS with forcing: 
\begin{eqnarray}
i\partial _{t}u_{f} &=&-\Delta u_{f}\pm \left\vert u_{f}\right\vert
^{p-1}u_{f}+f\text{ in }\mathbb{R}\times \mathbb{T}^{n},  \label{eqn:LLE} \\
u_{f}(0,x) &=&u_{0},  \notag
\end{eqnarray}%
the problem is to find $f$ and $u_{0}$ such that $u_{f}\in X$, for some
space $X$ in which (\ref{eqn:LLE}) is well-posed, minimizes some given
functional $Z(u)$. For some experimental and engineering purposes, the
spatial domain has to be $\mathbb{T}^{n}$. The space $X$, in which one looks
for the minimizer, largely determines the difficulty. If $X=L_{x}^{2}$ or $%
H_{x}^{1}$, there are techniques readily available to hunt for minimizers.
However, how to search for minimizers when $X$ is a proper subspace of $%
H_{x}^{1}$ like $H_{x}^{2}$ or $H_{x}^{1}\cap L_{t}^{p}L_{x}^{q}$, a common
space for well-posedness, remains open. Such a dilemma can be resolved if
one has unconditional uniqueness results like Theorem \ref%
{THM:MainUniquenessTHM}. The recent work \cite{SZ} has also brought
attention to the analysis of PDEs over $\mathbb{T}^{4-m}\times \mathbb{R}%
^{m} $ as it is shown that such domains arise on the necks of some K3
surfaces which are Calabi-Yau manifolds.

To prove Theorem \ref{THM:MainUniquenessTHM}, we will use the cubic
Gross-Pitaevskii (GP) hierarchy on $\mathbb{T}^{4}$, which is uncommon in
the analysis of NLS. Let $\mathcal{L}_{k}^{1}$ denote the space of trace
class operators on $L^{2}(\mathbb{T}^{4k})$. The cubic GP hierarchy on $%
\mathbb{T}^{4}$ is a sequence $\left\{ \gamma ^{(k)}(t)\right\} \in \oplus
_{k\geqslant 1}C\left( \left[ 0,T\right] ,\mathcal{L}_{k}^{1}\right) $ which
satisfies the infinitely coupled hierarchy of equations: 
\begin{equation}
i\partial _{t}\gamma ^{(k)}=\sum_{j=1}^{k}\left[ -\Delta _{x_{j}},\gamma
^{(k)}\right] \pm b_{0}\sum_{j=1}^{k}\limfunc{Tr}\nolimits_{k+1}\left[
\delta (x_{j}-x_{k+1}),\gamma ^{(k+1)}\right]  \label{Hierarchy:T^4 cubic GP}
\end{equation}%
where $b_{0}$ is some coupling constant, $\pm $ denotes defocusing /
focusing. Given any solution $u$ of (\ref{NLS:T^4 cubic NLS}), it generates
a solution to (\ref{Hierarchy:T^4 cubic GP}) by letting 
\begin{equation}
\gamma ^{(k)}=\left\vert u\right\rangle \left\langle u\right\vert ^{\otimes
k},  \label{eqn:solution to GP generated by NLS}
\end{equation}%
in operator form or 
\begin{equation*}
\gamma ^{(k)}(t,\mathbf{x}_{k},\mathbf{x}_{k}^{\prime
})=\dprod\limits_{j=1}^{k}u(t,x_{j})\bar{u}(t,x_{j}^{\prime })
\end{equation*}%
in kernel form if we write $\mathbf{x}_{k}=(x_{1},...,x_{k})\in \mathbb{T}%
^{4k}$.

Hierarchy (\ref{Hierarchy:T^4 cubic GP}) arises in the derivation of NLS as
a $N\rightarrow \infty $ limit of quantum $N$-body dynamics. It was first
derived in the work of Erd\"{o}s, Schlein, and Yau \cite%
{E-S-Y2,E-S-Y5,E-S-Y3} for the $\mathbb{R}^{3}$ defocusing cubic case around
2005.\footnote{%
See also \cite{AGT} for the 1D defocusing cubic case around the same time.}
They proved delicatedly that there is a unique solution to the $\mathbb{R}
^{3}$ cubic GP hierarchy in a $H^{1}$-type space (unconditional uniqueness)
in \cite{E-S-Y2} with a sophisticated Feynman graph analysis. This first
series of ground breaking papers have motivated a large amount of work.

In 2007, Klainerman and Machedon \cite{KM}, inspired by \cite%
{E-S-Y2,KMNullForm}, proved the uniqueness of solutions regarding the $%
\mathbb{R}^{3}$ cubic GP hierarchy in a Strichartz-type space (conditional
uniqueness). They proved a collapsing type estimate, which implies a
multilinear estimate when applied to factorized solutions like (\ref%
{eqn:solution to GP generated by NLS}), to estimate the inhomogeneous term,
and provided a different combinatorial argument, the now so-called
Klainerman-Machedon (KM) board game, to combine the inhomogeneous terms
effectively reducing their numbers. At that time, it was unknown how to
prove that the limits coming from the $N$-body dynamics are in the
Strichartz type spaces even though the solutions to (\ref{Hierarchy:T^4
cubic GP}) generated by the $\mathbb{R}^{3}$ cubic NLS naturally lie in both
the $H^{1}$-type space and the Strichartz type space. Nonetheless, \cite{KM}
has made the analysis of (\ref{Hierarchy:T^4 cubic GP}) approachable to PDE
analysts and the KM board game has been used in every work involving
hierarchy (\ref{Hierarchy:T^4 cubic GP}).\footnote{%
The analysis of the Boltzmann hierarchy is also using the KM board game,
see, for example, \cite{TCRDNP}.} When Kirkpatrick, Schlein, and Staffilani 
\cite{KSS} derived (\ref{Hierarchy:T^4 cubic GP}) and found that the
Klainerman-Machedon Strichartz-type bound can be obtained via a simple trace
theorem for the defocusing case in $\mathbb{R}^{2}$ and $\mathbb{T}^{2}$ in
2008, many works \cite{TCNP2,ChenAnisotropic,C-HFocusing,C-HFocusingII,Xie,S}
then followed such a scheme for the uniqueness of GP hierarchies. However,
how to check the Klainerman-Machedon Strichartz type bound in the 3D cubic
case remained fully open at that time.

T. Chen and Pavlovic laid the foundation for the 3D quintic defocusing
energy-critical case by studying the 1D and 2D defocusing quintic case, in
their late 2008 work \cite{TCNP2}, in which they proved that the 2D quintic
case, a case usually considered equivalent to the 3d cubic case, does
satisfy the Klainerman-Machedon Strichartz-type bound though proving it for
the 3D cubic case was still open.

T. Chen and Pavlovic also initiated the study of the well-posedness theory
of (\ref{Hierarchy:T^4 cubic GP}) with general initial datum as an
independent subject away from the quantum $N$-body dynamics in \cite%
{TCNP1,TCNP3,TCNP4}. (See also \cite{TCNPNT,MNPS,MNPRS1,MNPRS2,S,SS}.) On
the one hand, generalizing the problem could help to attack the
Klainerman-Machedon Strichartz type bound problem. On the other hand, it
leads one to consider whether hierarchy (\ref{Hierarchy:T^4 cubic GP}), the
general equation, could hold more in store than its special solution, NLS (%
\ref{NLS:T^4 cubic NLS}).\footnote{%
Private communication.} Then in 2011, T. Chen and Pavlovic proved that the
3D cubic Klainerman-Machedon Strichartz type bound does hold for the
defocusing $\beta <1/4$ case in \cite{TCNP5}. The result was quickly
improved to $\beta \leqslant 2/7$ by X.C. in \cite{Chen3DDerivation} and to
the almost optimal case, $\beta <1,$ by X.C. and J.H. in \cite{C-H2/3,C-H<1}%
, by lifting the $X_{1,b}$ space techniques from NLS theory into the field.

Around the same period of time, Gressman, Sohinger, and Staffilani \cite{GSS}
studied the uniqueness of (\ref{Hierarchy:T^4 cubic GP}) in the $\mathbb{T}
^{3}$ setting and found that the sharp collapsing estimate on $\mathbb{T}
^{3} $ needs $\varepsilon $ more derivatives than the $\mathbb{R}^{3}$ case
in which one derivative is needed. Herr and Sohinger later generalized this
fact to all dimensions in \cite{HS1}. That is, collapsing estimates on $%
\mathbb{T}^{n}$ always need $\varepsilon $ more derivatives than the $%
\mathbb{R}^{n}$ case proved in \cite{ChenAnisotropic}.\footnote{%
Except the 1D case, as shown in \cite{C-HFocusing}, this $\varepsilon $-loss
also happens in $\mathbb{R}^{1}$.}

In 2013, T. Chen, Hainzl, Pavlovic, and Seiringer, introduced the quantum de
Finetti theorem, from \cite{LMR}, to the derivation of the time-dependent
power-type NLS and provided, in \cite{CHPS}, a simplified proof of the $%
\mathbb{R}^{3}$ unconditional uniqueness theorem regarding (\ref%
{Hierarchy:T^4 cubic GP}) in \cite{E-S-Y2}. The application of the quantum
de Finetti theorem allows one to replace the collapsing estimates by the
multilinear estimates. The scheme in \cite{CHPS}, which consists of the
Klainerman-Machedon board game, the quantum de Finetti theorem, and the
multilinear estimates, is robust. Sohinger used this scheme in \cite{S1} to
address the aforementioned $\varepsilon $-loss problem for the defocusing $%
\mathbb{T}^{3}$ cubic case. Hong, Taliaferro, and Xie used this scheme to
obtain unconditional uniqueness theorems for (\ref{Hierarchy:T^4 cubic GP})
in $\mathbb{R}^{n}$, $n=1,2,3$, with regularities matching the NLS analysis,
in \cite{HTX}, and $H^{1}$ small solution uniqueness for the $\mathbb{R}^{3}$
quintic case in \cite{HTX1}. (See also \cite{C-HFocusingII,C-PUniqueness}.)

The analysis of GP hierarchy did not yield new NLS results with regularity
lower than the NLS analysis until \cite{HS2, CJInvent}. (See also \cite{K}
for recent development using NLS analysis.) In \cite{HS2}, with the scheme
in \cite{CHPS}, Herr and Sohinger generalized the usual Sobolev multilinear
estimates, to Besov spaces and obtained new unconditional uniqueness results
regarding (\ref{Hierarchy:T^4 cubic GP}) and hence NLS (\ref{NLS:T^4 cubic
NLS}) on $\mathbb{T}^{n}$. The result has pushed the regularity requirement
for uniqueness of (\ref{NLS:T^4 cubic NLS}) lower than the number coming
from the NLS analysis. Moreover, their result has covered the whole
subcritical region for $n\geqslant 4$, which includes Theorem \ref%
{THM:MainUniquenessTHM} with $H^{1+\varepsilon }$ regularity.

In \cite{CJInvent}, by discovering the new hierarchical uniform frequency
localization (HUFL) property for the GP hierarchy, which reduces to a new
statement even for NLS, X.C. and J.H. established a new $H^{1}$-type
uniqueness theorem for the $\mathbb{T}^{3}$ quintic energy-critical GP
hierarchy. The new uniqueness theorem, though neither conditional nor
unconditional for the GP hierarchy, implies the $H^{1}$ unconditional
uniqueness result for the $\mathbb{T}^{3}$ quintic energy-critical NLS. It
is then natural to consider the $\mathbb{T}^{4}$ cubic energy-critical case
in this paper. However, the key Sobolev multilinear estimates in \cite%
{CJInvent}, fail for the $\mathbb{T }^{4}$ cubic case here, and it turns
out, surprisingly, that $\mathbb{T}^{4}$ is \emph{unique / special} when
compared to $\mathbb{R}^{3}/\mathbb{R}^{4}/ \mathbb{T}^{3}$.

\subsection{Outline of the Proof of Theorem \protect\ref%
{THM:MainUniquenessTHM}}

We will prove Theorem \ref{THM:MainUniquenessTHM} as a corollary of Theorem %
\ref{Thm:GP uniqueness}, a GP hierarchy uniqueness theorem stated in \S \ref%
{Sec:GP Uniqueness}. As Theorem \ref{Thm:GP uniqueness} requires the HUFL
condition, we prove that any $C_{[0,T]}^{0}H_{x}^{1}\cap \dot{C}%
_{[0,T]}^{1}H_{x}^{-1}$ solution to (\ref{NLS:T^4 cubic NLS}) on $\mathbb{T}%
^{4}$ satisfies uniform in time frequency localization (UTFL) with Lemma \ref%
{Cor:UTFLforNLS}. That is, solutions to (\ref{Hierarchy:T^4 cubic GP})
generated from (\ref{NLS:T^4 cubic NLS}) via formula (\ref{eqn:solution to
GP generated by NLS}) satisfy the HUFL condition. Thus we will have
established Theorem \ref{THM:MainUniquenessTHM} once we have proved Theorem %
\ref{Thm:GP uniqueness}.

As Theorem \ref{Thm:GP uniqueness} is an energy-critical case, due to the
known similarities between the $\mathbb{R}^{3}$ quintic and the $\mathbb{R}%
^{4}$ cubic cases, one would guess that the proof of the $\mathbb{T}^{3}$
quintic case goes through for the $\mathbb{T}^{4}$ cubic case as well. It
does \emph{not}. As mentioned before, the key Sobolev multilinear estimates
in \cite{CJInvent}, fail here. Interested readers can see Appendix \ref%
{Sec:OldEstimatesFail} for the proof that they fail. In this $H^{1}$%
-critical setting, the next in the line replacement would be the weaker $U$-$%
V$ multilinear estimates. The $U$-$V$ trilinear estimates do hold on $%
\mathbb{T}^{4}$. This is where we start.

In \S \ref{Sec:UVandTrilinear}, we first give a short introduction to the $U$
-$V$ space referring the standard literature \cite{HTT,IP,KTV,KV}, then
prove the $U$-$V$ version of the $\mathbb{T}^{4}$ trilinear estimates,
Lemmas \ref{Lem:T4TrilinearWithUV} and \ref{Lem:T4TrilinearWithUVH1}. The
proof of the $U$-$V$ trilinear estimates is less technical and simpler than
the proof of the Sobolev multilinear estimates in \cite{CJInvent}, as they
are indeed weaker.\footnote{%
The stronger Sobolev multilinear estimates hold for $\mathbb{R}^{4}$. See
Appendix \ref{sec:R4TrilinearEstimate}.} But these $U$-$V$ trilinear
estimates still highly rely on the scale invariant Stichartz estimates / $%
l^{2}$-decoupling theorem in \cite{BD,KV}.

Though the $U$-$V$ trilinear estimates hold in $\mathbb{T}^{4}$, there is no
method available to use them to prove uniqueness for GP hierarchies. This is
why estimates in the hierarchy framework have always been about $%
L_{t}^{p}H_{x}^{s}$. Even in \cite{C-H2/3,C-H<1} in which the $X_{s,b}$
techniques were used, they were used only once in the very end of the
iteration instead of every step of the iteration to yield smallness.
Conceptually speaking, while it is easy to bound the $L_{t}^{\infty
}H_{x}^{s}$ norm by the $U$-$V$ norms, one has to pay half a derivative in
time to come back. On the one hand, we are proving an unconditional
uniqueness theorem, we have to come back to the Sobolev spaces in the end of
the proof. On the other hand, we are proving a critical result, we do not
have an extra half derivative in time to spare. To fix this problem, we
adjust how the multilinear estimates apply to the Duhamel-Born expansion of $%
\gamma ^{(k)}$ after the application of the Klainerman-Machedon board game,
so that the $U$- $V$ trilinear estimates only lands on \textquotedblleft
Duhamel-like" integral.

The main problem now surfaces. The time integration domain $D_{m}$ of the
aforementioned \textquotedblleft Duhamel-like" integrals, coming from the
Klainerman-Machedon board game, is a union of a very large number of high
dimensional simplexes under the action of a proper subset of the permutation
group $S_{k}$ specific to every integrand. To, at least, have a chance to
use space-time norms like $X_{s,b}$ and $U$-$V,$ which are very sensitive to
the irregularity of time domain as they involve taking time derivatives, one
would have to know what $D_{m}$ is. It turns out, $D_{m}$ coming from the
original Klainerman-Machedon board game is not fully compatible with the $U$%
- $V$ trilinear estimates. To this end, we establish an extended
Klainerman-Machedon board game which is compatible in \S \ref%
{Sec:ElaboratedKMBoardGame}.

We first develop, as a warm up, in \S \ref{Sec:Find Integration Limits}, via
a detailed tree\footnote{%
This is the 3rd type of tree used in the analysis of GP hiearchies. The 1st
two are the Feymann graphs in \cite{E-S-Y2} and the binary trees in \cite%
{CHPS}. They are coded differently and serve different purposes.} diagram
representation, a more elaborated proof of the original Klainerman-Machedon
board game, which yields, for the first time, an algorithm to directly
compute $D_{m}$ and domains like that. Graphically speaking, under our tree
representation, the original Klainerman-Machedon board game combines all the
trees with the same skeletons into a \textquotedblleft upper echelon" class
which can be represented by a upper echelon tree.\footnote{%
It is possible to write \S \ref{Sec:ElaboratedKMBoardGame} without trees (or
matrices), but we would lose this graphical explanation. Due to the
coupling, recursive, and iterative features of the hiearchies, algorithm
terminologies happen to be helpful.} The time integration domain $D_{m}$ for
each upper echelon class can be directly read off from the upper echelon
tree representing the class.

We then introduce, in \S \ref{Sec:Wild1}-\ref{Sec:Wild5}, the wild moves,
which allow us to uncover more integrals in the Duhamel-Born expansion with
same integrands after permutation, and to combine them into
\textquotedblleft reference" classes. Graphically speaking, it allows the
combination of trees sharing the same reference enumeration but different
structure. However, the wild moves are not compatible with the upper echelon
classes coming from the original Klainerman-Machedon board game. We have to
restart from the very beginning at the level of the $2^{k}k!$ summands.

Before applying the wild moves, in \S \ref{Sec:Wild1}-\ref{Sec:Wild2}, we
turn the $2^{k}k!$ summands in the initial Duhamel-Born expansion, into
their tamed forms, which would be invariant under the wild moves, via a
reworked signed Klainerman-Machedon acceptable moves. We then sort the tamed
forms into tamed classes via the wild moves in \S \ref{Sec:Wild4}. Finally,
in \S \ref{Sec:Wild5}, we use the algorithm we just developed in \S \ref%
{Sec:Find Integration Limits} to calculate the time integration domain for
each tamed class. In fact, we prove that, given a tamed class, there is a
reference form representing the tamed class and the time integration domain
for the whole tamed class can be directly read out from the reference form.

Using this extended Klainerman-Machedon board game coming from scratch, we
found that, the previously thought unrepresentable or even disconnected time
integration domain specific for each integrand, the time integration domain
which got expanded into $[0,T]^{k}$ in all previous work as there was no
other options to use it, that time integration domain can be
\textquotedblleft miraculously" written as one single iterated integral in
the integration order ready to apply the quantum de Finette theorem.
Moreover, once these integration limits are put together with the integrand,
each distinct tamed class becomes an exact fit to apply the $U $-$V$
trilinear estimates we proved in \S \ref{Sec:UVandTrilinear}. This
combinatorial analysis, which is compatible with space-time norms and the
method to explicitly compute the time integration domain in the general
recombined Duhamel-Born expansion (which includes more than the GP
hierarchies) is the main technical achievements in this paper.

With everything set ready by the extended Klainerman-Machedon board game we
concluded in \S \ref{Sec:ElaboratedKMBoardGame}, the quantum de Finette
theorem from \cite{CHPS}, the $U$-$V$ space techniques from \cite{KTV}, the
trilinear estimates proved using the scale invariant Stichartz estimates / $%
l^{2}$-decoupling theorem in \cite{BD,KV}, and the HUFL properties from \cite%
{CJInvent}, then all work together seamlessly in \S \ref{Sec:GP Uniqueness-2}
to establish Theorem \ref{Thm:GP uniqueness} and provide a unified proof of
the large solution uniqueness for the $\mathbb{R}^{3}$/$\mathbb{T}^{3}$
quintic and the $\mathbb{R}^{4}$/$\mathbb{T}^{4}$ cubic energy-critical GP
hierarchies and hence the corresponding NLS. The discovery of such an
unexpected close and effective collaboration of these previously independent
deep theorems is the main novelty of this paper. We now expect to be able to
bring the full strength of the dispersive estimate technology to bear on
various type of hierarchies of equations and related problems, and this is
our first example of it.

\section{Trilinear Estimates in the $U$-$V$ Spaces\label{Sec:UVandTrilinear}}

As mentioned in the introduction, our proof of Theorem \ref%
{THM:MainUniquenessTHM} requires the $U$-$V$ space while the $\mathbb{R}^{3}$%
/$\mathbb{R}^{4}/\mathbb{T}^{3}$ cases do not. Referring to the now standard
text \cite{KTV} for the definition of $U_{t}^{p}$ and $V_{t}^{p}$, we define 
\begin{equation*}
\Vert u\Vert _{X^{s}\left( \left[ 0,T\right) \right) }=\left( \sum_{\xi \in 
\mathbb{Z}^{4}}\left\langle \xi \right\rangle ^{2s}\Vert \widehat{%
e^{-it\Delta }u(t,\cdot )}\left( \xi \right) \Vert _{U_{t}^{2}}^{2}\right) ^{%
\frac{1}{2}}
\end{equation*}%
and 
\begin{equation*}
\Vert u\Vert _{Y^{s}\left( \left[ 0,T\right) \right) }=\left( \sum_{\xi \in 
\mathbb{Z}^{4}}\left\langle \xi \right\rangle ^{2s}\Vert \widehat{%
e^{-it\Delta }u(t,\cdot )}\left( \xi \right) \Vert _{V_{t}^{2}}^{2}\right) ^{%
\frac{1}{2}}
\end{equation*}%
as in \cite{HTT,HTT1,IP,KV}. In particular, we have the usual properties, 
\begin{equation}
\Vert u\Vert _{L_{t}^{\infty }H_{x}^{s}}\lesssim \Vert u\Vert _{X^{s}},
\label{E:H^1/2-embedding}
\end{equation}%
\begin{equation}
\left\Vert e^{it\Delta }f\right\Vert _{Y^{s}}\lesssim \left\Vert e^{it\Delta
}f\right\Vert _{X^{s}}\lesssim \Vert f\Vert _{H^{s}}
\label{eqn:v norm applied to free solution}
\end{equation}%
\begin{eqnarray}
&&\left\Vert \int_{a}^{t}e^{i(t-s)\Delta }f(s,\cdot )\,ds\right\Vert
_{X^{s}\left( \left[ 0,T\right) \right) }
\label{eqn:dual definition of Duhamel X^s norm} \\
&\leqslant &\sup_{v\in Y^{-s}\left( \left[ 0,T\right) \right) :\left\Vert
v\right\Vert _{Y^{-s}}=1}\int_{0}^{T}\int_{\mathbb{T}^{4}}f(t,x)\overline{%
v(t,x)}dtdx\text{, }\forall a\in \left[ 0,T\right)  \notag
\end{eqnarray}%
which were proved on \cite[p.46]{KTV} and in \cite[Propositions 2.8-2.11]%
{HTT}. With the above definitions of $X^{s}$ and $Y^{s}$, we have the
following trilinear estimates.

\begin{lemma}
\label{Lem:T4TrilinearWithUV}On $\mathbb{T}^{4}$, we have the high frequency
estimate 
\begin{align}
\hspace{0.3in}& \hspace{-0.3in}
\iint_{x,t}u_{1}(t,x)u_{2}(t,x)u_{3}(t,x)g(t,x)dxdt
\label{eqn:T4UVTrilinearEstimateHigh} \\
& \lesssim \Vert u_{1}\Vert _{Y^{-1}}\Vert u_{2}\Vert _{Y^{1}}\Vert
u_{3}\Vert _{Y^{1}}\Vert g\Vert _{Y^{1}},  \notag
\end{align}
and the low frequency estimate 
\begin{align}
\hspace{0.3in}& \hspace{-0.3in}\iint_{x,t}u_{1}(t,x)\left( P_{\leqslant
M_{0}}u_{2}\right) (t,x)u_{3}(t,x)g(t,x)dxdt
\label{eqn:T4UVTrilinearEstimateLow} \\
& \lesssim T^{\frac{1}{7}}M_{0}^{\frac{3}{5}}\Vert u_{1}\Vert _{Y^{-1}}\Vert
P_{\leqslant M_{0}}u_{2}\Vert _{Y^{1}}\Vert u_{3}\Vert _{Y^{1}}\Vert g\Vert
_{Y^{1}},  \notag
\end{align}
for all $T\leqslant 1$ and all frequencies $M_{0}\geqslant 1$. Or 
\begin{eqnarray}
&&\left\Vert \int_{a}^{t}e^{i(t-s)\Delta }\left( u_{1}u_{2}u_{3}\right)
\,ds\right\Vert _{X^{-1}\left( \left[ 0,T\right) \right) }
\label{eqn:H-1-T4TrilinearEstimateWithGain} \\
&\lesssim &\Vert u_{1}\Vert _{Y^{-1}}\left( T^{\frac{1}{7}}M_{0}^{\frac{3}{5}
}\Vert P_{\leqslant M_{0}}u_{2}\Vert _{Y^{1}}+\Vert P_{>M_{0}}u_{2}\Vert
_{Y^{1}}\right) \Vert u_{3}\Vert _{Y^{1}}  \notag
\end{eqnarray}
and 
\begin{equation}
\left\Vert \int_{a}^{t}e^{i(t-s)\Delta }\left( u_{1}u_{2}u_{3}\right)
\,ds\right\Vert _{X^{-1}\left( \left[ 0,T\right) \right) }\lesssim \Vert
u_{1}\Vert _{Y^{-1}}\Vert u_{2}\Vert _{Y^{1}}\Vert u_{3}\Vert _{Y^{1}}
\label{eqn:H-1-T4TrilinearEstimateWithoutGain}
\end{equation}
Moreover, if $u_{j}=e^{it\Delta }f_{j}$ for some $j$, then the $Y^{s}$ norm
of $u_{j}$ in (\ref{eqn:H-1-T4TrilinearEstimateWithGain}) or (\ref%
{eqn:H-1-T4TrilinearEstimateWithoutGain}) can be replaced by the $H^{s}$
norm of $f_{j}$.
\end{lemma}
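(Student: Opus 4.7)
The plan is to first establish the quadrilinear integral estimates \eqref{eqn:T4UVTrilinearEstimateHigh} and \eqref{eqn:T4UVTrilinearEstimateLow}; the Duhamel forms \eqref{eqn:H-1-T4TrilinearEstimateWithGain} and \eqref{eqn:H-1-T4TrilinearEstimateWithoutGain} then follow by pairing $\int_{a}^{t} e^{i(t-s)\Delta}(u_{1} u_{2} u_{3})\,ds$ with an arbitrary $v \in Y^{1}([0,T))$ via the dual characterization \eqref{eqn:dual definition of Duhamel X^s norm} at $s = -1$, taking $g = \bar v$; for the version \eqref{eqn:H-1-T4TrilinearEstimateWithGain} with the $M_{0}$--$T$ gain one decomposes $u_{2} = P_{\leqslant M_{0}} u_{2} + P_{> M_{0}} u_{2}$ and applies \eqref{eqn:T4UVTrilinearEstimateLow} to the first piece together with \eqref{eqn:T4UVTrilinearEstimateHigh} to the second. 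The closing statement, that $\Vert u_{j}\Vert_{Y^{s}}$ may be replaced by $\Vert f_{j}\Vert_{H^{s}}$ whenever $u_{j} = e^{it\Delta} f_{j}$, is then immediate from \eqref{eqn:v norm applied to free solution}.

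For the high-frequency estimate \eqref{eqn:T4UVTrilinearEstimateHigh}, I would Littlewood--Paley decompose each of $u_{1}, u_{2}, u_{3}, g$ into dyadic blocks $P_{N_{j}}u_{j}$ and $P_{N_{g}}g$. Parseval imposes the Fourier-support constraint $\xi_{1} + \xi_{2} + \xi_{3} + \xi_{g} = 0$, which forces the two largest dyadic frequencies to be comparable; after relabeling one may assume $N_{2} \sim \max(N_{1}, N_{3}, N_{g})$. The $V^{2} \hookrightarrow U^{p}$ embedding together with the atomic characterization of $U^{p}$ reduces each factor, up to an absolute constant, to a step function of free Schr\"odinger solutions, so it suffices to prove the frequency-localized estimate for $u_{j} = e^{it\Delta}f_{j}$ (the transfer principle). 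I would then combine H\"older with the scale-invariant Strichartz estimates on $\mathbb{T}^{4}$ from \cite{BD,KV}, distributing the four factors either as three in $L^{3}_{t,x}$ paired against one in $L^{\infty}_{t} L^{2}_{x}$, or as four in $L^{4}_{t,x}$, so that the frequency weights from $Y^{-1}, Y^{1}, Y^{1}, Y^{1}$ absorb the Strichartz frequency losses. Summing the resulting dyadic geometric series over frequency configurations, using the balance $N_{1} \sim N_{2}$ (or $N_{g} \sim N_{2}$), produces \eqref{eqn:T4UVTrilinearEstimateHigh}.

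For the low-frequency estimate \eqref{eqn:T4UVTrilinearEstimateLow}, the $u_{2}$ factor is now restricted to Fourier frequencies $|\xi| \leqslant M_{0}$, and the idea is to trade this localization for a positive power of $M_{0}$ via Bernstein in $x$, and to trade time integrability for a positive power of $T$ via H\"older in $t$ on $[0,T]$. Concretely, I would H\"older-split
\begin{equation*}
\iint u_{1} (P_{\leqslant M_{0}} u_{2}) u_{3} g\, dx\, dt \leq \Vert P_{\leqslant M_{0}} u_{2}\Vert_{L^{7}_{t} L^{q}_{x}}\, \Vert u_{1} u_{3} g\Vert_{L^{7/6}_{t} L^{q'}_{x}},
\end{equation*}
bound the first factor by $T^{1/7} \Vert P_{\leqslant M_{0}} u_{2}\Vert_{L^{\infty}_{t} L^{q}_{x}} \lesssim T^{1/7} M_{0}^{4(1/2 - 1/q)} \Vert u_{2}\Vert_{L^{\infty}_{t} L^{2}_{x}}$ by the trivial time embedding followed by Bernstein, and bound the second factor by three applications of the scale-invariant $\mathbb{T}^{4}$ Strichartz estimate \cite{BD,KV} combined with the Littlewood--Paley summation of the previous paragraph, with the $Y^{\pm 1}$-weights absorbing the Strichartz frequency exponents. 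Choosing $q = 20/7$ makes $4(\tfrac{1}{2} - \tfrac{1}{q}) = \tfrac{3}{5}$ and produces exactly the announced $T^{1/7} M_{0}^{3/5}$ gain.

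The main technical obstacle is the criticality of the $\mathbb{T}^{4}$ Strichartz estimate: the scale-invariant $L^{3}_{t,x}$ form carries an $N^{\varepsilon}$ loss from $\ell^{2}$-decoupling that would be fatal at the $H^{1}$ scaling unless the H\"older split is arranged so this loss is outweighed by a genuine geometric gain coming from the frequency balance together with the $Y^{\pm 1}$-weights. This is precisely why the Sobolev version of the analogous trilinear estimate fails on $\mathbb{T}^{4}$ (see Appendix \ref{Sec:OldEstimatesFail}) while the $U$--$V$ version survives: the $U$--$V$ norms admit an $L^{\infty}_{t} L^{2}_{x}$-type endpoint via $V^{2}$ atoms that is not accessible from a pure $L^{p}_{t} H^{s}_{x}$ norm. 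Calibrating the low-frequency split to land exactly at $T^{1/7} M_{0}^{3/5}$ is the single delicate bookkeeping point of the proof and is what ultimately sets the time and frequency margins available for the combinatorial iteration in the later sections.
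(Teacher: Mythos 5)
Your reduction of \eqref{eqn:H-1-T4TrilinearEstimateWithGain} and \eqref{eqn:H-1-T4TrilinearEstimateWithoutGain} to the quadrilinear bounds via the duality \eqref{eqn:dual definition of Duhamel X^s norm}, and the final replacement of $Y^{s}$ by $H^{s}$ via \eqref{eqn:v norm applied to free solution}, are fine and match the paper. The gap is in the core quadrilinear estimates. For \eqref{eqn:T4UVTrilinearEstimateHigh}, neither of your proposed H\"older splits closes. Four factors in $L^{4}_{t,x}$ costs $N_j^{1/2}$ per factor by \eqref{eqn:T^4 p>3 u-v str}; after inserting the $Y^{-1},Y^{1},Y^{1},Y^{1}$ weights the dyadic factor in the dominant regime $N_{1}\sim N_{2}\geq N_{3}\geq N_{g}$ is $N_{1}^{3/2}N_{2}^{-1/2}N_{3}^{-1/2}N_{g}^{-1/2}\sim N_{1}(N_{3}N_{g})^{-1/2}$, which grows in the high frequency and cannot be summed. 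The split $L^{3}\times L^{3}\times L^{3}\times L^{\infty}_{t}L^{2}_{x}$ would close only if the scale-invariant $L^{3}_{t,x}$ Strichartz estimate held without loss; but on $\mathbb{T}^{4}$ that endpoint carries an $N^{\varepsilon}$ loss (this is exactly the failure mechanism recorded in Appendix \ref{Sec:OldEstimatesFail}), and at critical regularity the pair $N_{1}\sim N_{2}$ has zero net decay, so there is no room to absorb even $N_{1}^{\varepsilon}$: the Cauchy--Schwarz summation over $N_{1}\sim N_{2}$ breaks. Saying the loss is ``outweighed by the frequency balance and the $Y^{\pm1}$ weights'' is precisely the step that has no content here; some genuinely new gain is required. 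In the paper that gain comes from decomposing the two comparable high-frequency blocks into noncentered cubes $Q$ of sidelength $M_{3}$ and applying the Galilean-invariant Strichartz estimate of Corollary \ref{cor:str with noncentered cube} at exponents strictly above $3$ (namely $10/3$ and $10$, and $2,4,4$ or $7/2,7/2,7$ in the low-frequency cases), so that the Strichartz losses are measured in the small frequency $M_{3}$ (or $M$) rather than $M_{1}$, followed by Cauchy--Schwarz in $Q$ and Schur's test in the dyadic parameters. This bilinear-type, almost-orthogonality mechanism is the missing idea in your outline.

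The same issue undermines your treatment of \eqref{eqn:T4UVTrilinearEstimateLow}: after you peel off $\Vert P_{\leqslant M_{0}}u_{2}\Vert_{L^{7}_{t}L^{20/7}_{x}}\lesssim T^{1/7}M_{0}^{3/5}\Vert P_{\leqslant M_{0}}u_{2}\Vert_{Y^{1}}$ (which is fine), the remaining trilinear factor still pairs the $Y^{-1}$ input $u_{1}$ against two $Y^{1}$ inputs at critical regularity, and the powers of $T$ and $M_{0}$ give no help with the high-frequency pair $N_{1}\sim\max(N_{3},N_{g})$; one again needs the cube decomposition (or some loss-free bilinear substitute), not just diagonal Strichartz plus Bernstein. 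There is also a structural point you skip: in the paper's symmetric reduction one cannot assume the projection $P_{\leqslant M_{0}}$ sits on the factor labelled $u_{2}$, and the worst case (treated in the paper) is when it sits on $u_{3}$ while $u_{1}$ stays in $Y^{-1}$; any complete proof must handle that configuration.
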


Similarly, we have the $X^{1}$ estimates.

\begin{lemma}
\label{Lem:T4TrilinearWithUVH1}On $\mathbb{T}^{4}$, we have the high
frequency estimate 
\begin{align}
\hspace{0.3in}& \hspace{-0.3in}
\iint_{x,t}u_{1}(t,x)u_{2}(t,x)u_{3}(t,x)g(t,x)dxdt
\label{eqn:T4UVTrilinearEstimateHighH1} \\
& \lesssim \Vert u_{1}\Vert _{Y^{1}}\Vert u_{2}\Vert _{Y^{1}}\Vert
u_{3}\Vert _{Y^{1}}\Vert g\Vert _{Y^{-1}}.  \notag
\end{align}
and the low frequency estimate 
\begin{align}
\hspace{0.3in}& \hspace{-0.3in}\iint_{x,t}u_{1}(t,x)\left( P_{\leqslant
M_{0}}u_{2}\right) (t,x)u_{3}(t,x)g(t,x)dxdt
\label{eqn:T4UVTrilinearEstimateLowH1} \\
& \lesssim T^{\frac{1}{7}}M_{0}^{\frac{3}{5}}\Vert u_{1}\Vert _{Y^{1}}\Vert
P_{\leqslant M_{0}}u_{2}\Vert _{Y^{1}}\Vert u_{3}\Vert _{Y^{1}}\Vert g\Vert
_{Y^{-1}}.  \notag
\end{align}
In other words, 
\begin{eqnarray}
&&\left\Vert \int_{a}^{t}e^{i(t-s)\Delta }\left( u_{1}u_{2}u_{3}\right)
\,ds\right\Vert _{X^{1}\left( \left[ 0,T\right) \right) }
\label{eqn:H1-T4TrilinearEstimateWithGain} \\
&\lesssim &\Vert u_{1}\Vert _{Y^{1}}\left( T^{\frac{1}{7}}M_{0}^{\frac{3}{5}
}\Vert P_{\leqslant M_{0}}u_{2}\Vert _{Y^{1}}+\Vert P_{>M_{0}}u_{2}\Vert
_{Y^{1}}\right) \Vert u_{3}\Vert _{Y^{1}}  \notag
\end{eqnarray}
and 
\begin{equation}
\left\Vert \int_{a}^{t}e^{i(t-s)\Delta }\left( u_{1}u_{2}u_{3}\right)
\,ds\right\Vert _{X^{1}\left( \left[ 0,T\right) \right) }\lesssim \Vert
u_{1}\Vert _{Y^{1}}\Vert u_{2}\Vert _{Y^{1}}\Vert u_{3}\Vert _{Y^{1}}
\label{eqn:H1-T4TrilinearEstimateWithoutGain}
\end{equation}
Moreover, if $u_{j}=e^{it\Delta }f_{j}$ for some $j$, then the $Y^{s}$ norm
of $u_{j}$ in (\ref{eqn:H1-T4TrilinearEstimateWithGain}) or (\ref%
{eqn:H1-T4TrilinearEstimateWithoutGain}) can be replaced by the $H^{s}$ norm
of $f_{j}$.
\end{lemma}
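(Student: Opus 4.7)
The plan is to derive Lemma \ref{Lem:T4TrilinearWithUVH1} directly from the already established Lemma \ref{Lem:T4TrilinearWithUV} by exploiting the symmetry of the four-fold integrand. Since the space-time integral $\iint u_{1}u_{2}u_{3}g\,dx\,dt$ is an unsigned product of four factors, it is invariant under any permutation of its slots; in particular, interchanging the roles of $u_{1}$ and $g$ in \eqref{eqn:T4UVTrilinearEstimateHigh} immediately yields \eqref{eqn:T4UVTrilinearEstimateHighH1}, and the same interchange in \eqref{eqn:T4UVTrilinearEstimateLow} yields \eqref{eqn:T4UVTrilinearEstimateLowH1}. The Littlewood--Paley projection $P_{\leqslant M_{0}}$ sits on the second factor $u_{2}$ in both the hypothesis and the conclusion, so it is unaffected by this relabeling, and the numerology $T^{1/7}M_{0}^{3/5}$ carries over unchanged.

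To pass to the Duhamel formulations \eqref{eqn:H1-T4TrilinearEstimateWithGain} and \eqref{eqn:H1-T4TrilinearEstimateWithoutGain}, I would invoke the duality characterization \eqref{eqn:dual definition of Duhamel X^s norm} at regularity $s=1$:
\begin{equation*}
\left\|\int_{a}^{t}e^{i(t-s)\Delta}(u_{1}u_{2}u_{3})\,ds\right\|_{X^{1}([0,T))}
\leqslant \sup_{\|v\|_{Y^{-1}}=1}\int_{0}^{T}\!\int_{\mathbb{T}^{4}}u_{1}u_{2}u_{3}\,\overline{v}\,dx\,dt.
\end{equation*}
Since the $Y^{s}$-norm is defined through absolute values of Fourier coefficients and is therefore preserved by complex conjugation, I can apply the integral estimate \eqref{eqn:T4UVTrilinearEstimateHighH1} with $g=\overline{v}$ to recover \eqref{eqn:H1-T4TrilinearEstimateWithoutGain}. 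For the sharper bound \eqref{eqn:H1-T4TrilinearEstimateWithGain}, I split $u_{2}=P_{\leqslant M_{0}}u_{2}+P_{>M_{0}}u_{2}$, applying \eqref{eqn:T4UVTrilinearEstimateLowH1} to the low-frequency piece and \eqref{eqn:T4UVTrilinearEstimateHighH1} to the high-frequency piece, and then taking the supremum over $v$.

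The ``moreover'' clause is a one-line corollary of the embedding \eqref{eqn:v norm applied to free solution}: whenever $u_{j}=e^{it\Delta}f_{j}$, one has $\|u_{j}\|_{Y^{s}}\lesssim \|f_{j}\|_{H^{s}}$, so the $Y^{s}$ norm of such a $u_{j}$ may be replaced by the $H^{s}$ norm of $f_{j}$ at any slot in which it appears.

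I do not anticipate any serious obstacle: the entire proof is a symmetry-plus-duality argument that reuses Lemma \ref{Lem:T4TrilinearWithUV} as a black box. The only point requiring a genuine sanity check is that the quadrilinear pairing in Lemma \ref{Lem:T4TrilinearWithUV} really does treat $g$ on equal footing with the $u_{j}$'s, i.e.\ that the estimate does not secretly rely on an implicit Hermitian structure which singles out $g$ as a ``test'' slot with a different sign convention. Since the $Y^{s}$-norm is insensitive to complex conjugation and the Strichartz / $l^{2}$-decoupling input used in \S \ref{Sec:UVandTrilinear} enters the four factors symmetrically, this interchange is indeed valid, and the reduction goes through cleanly.
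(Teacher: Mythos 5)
Your argument is correct, but it is a genuinely different route from the paper's: the paper omits the proof of Lemma \ref{Lem:T4TrilinearWithUVH1} altogether, remarking only that it follows from the \emph{proof} of Lemma \ref{Lem:T4TrilinearWithUV} with little modifications, and citing \cite[Proposition 2.12]{HTT1}, \cite[(4.4)]{KV} for \eqref{eqn:T4UVTrilinearEstimateHighH1} while calling \eqref{eqn:T4UVTrilinearEstimateLowH1} easy. You instead use Lemma \ref{Lem:T4TrilinearWithUV} as a black box: since the integrand $u_{1}u_{2}u_{3}g$ carries no complex conjugates, the quadrilinear pairing is fully symmetric in its four slots, so \eqref{eqn:T4UVTrilinearEstimateHigh} and \eqref{eqn:T4UVTrilinearEstimateLow} applied with the first and fourth arguments interchanged (the projection $P_{\leqslant M_{0}}$ stays on the untouched second slot) give \eqref{eqn:T4UVTrilinearEstimateHighH1} and \eqref{eqn:T4UVTrilinearEstimateLowH1} verbatim, with the same factor $T^{\frac{1}{7}}M_{0}^{\frac{3}{5}}$; the duality characterization of the $X^{1}$ norm stated in \S \ref{Sec:UVandTrilinear}, the splitting $u_{2}=P_{\leqslant M_{0}}u_{2}+P_{>M_{0}}u_{2}$, and \eqref{eqn:v norm applied to free solution} then yield \eqref{eqn:H1-T4TrilinearEstimateWithGain}, \eqref{eqn:H1-T4TrilinearEstimateWithoutGain} and the ``moreover'' clause, exactly mirroring how the paper implicitly passes from the integral estimates to the Duhamel forms in Lemma \ref{Lem:T4TrilinearWithUV}. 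One small repair to your justification: the $Y^{s}$ norm is not built from absolute values of Fourier coefficients but from $V_{t}^{2}$ norms of $\widehat{e^{-it\Delta}u}(\xi)$; the conjugation invariance your duality step needs follows instead from $\widehat{e^{-it\Delta}\bar{v}}(\xi)=\overline{\widehat{e^{-it\Delta}v}(-\xi)}$ together with the invariance of the $V^{2}$ norm under conjugation and of $\langle \xi\rangle^{2s}$ and the $\ell_{\xi}^{2}$ sum under $\xi \mapsto -\xi$ (a fact the paper also uses tacitly for Lemma \ref{Lem:T4TrilinearWithUV} itself). As for what each approach buys: yours is shorter and avoids re-examining the Littlewood--Paley/decoupling argument, resting entirely on the conjugate-free symmetric form of the stated pairing; the paper's suggestion to rerun the proof (or quote \cite{HTT1,KV}) is what one would need for variants not obtainable by permuting slots, and it situates the scale-invariant high-frequency estimate within the known $\mathbb{T}^{4}$ literature.
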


We prove only Lemma \ref{Lem:T4TrilinearWithUV}. On the one hand, Lemma \ref%
{Lem:T4TrilinearWithUVH1} follows from the proof of Lemma \ref%
{Lem:T4TrilinearWithUV} with little modifications. On the other hand, (\ref%
{eqn:T4UVTrilinearEstimateHighH1}) has already been proved as \cite[
Proposition 2.12]{HTT1} \cite[(4.4)]{KV} and the non-scale-invariant
estimate (\ref{eqn:T4UVTrilinearEstimateLowH1}) is easy. Thence, we omit the
proof of Lemma \ref{Lem:T4TrilinearWithUVH1}.

\begin{remark}
At least one of the $L_{t}^{1}H_{x}^{s}$ versions of (\ref%
{eqn:H-1-T4TrilinearEstimateWithoutGain}) and (\ref%
{eqn:H1-T4TrilinearEstimateWithoutGain}) fail (see Appendix \ref%
{Sec:OldEstimatesFail}). While the $L_{t}^{1}H_{x}^{s}$ version of (\ref%
{eqn:H-1-T4TrilinearEstimateWithoutGain}) and (\ref%
{eqn:H1-T4TrilinearEstimateWithoutGain}) hold for $\mathbb{T}^{3}$ (see \cite%
{CJInvent}) and $\mathbb{R}^{4}$ (see Appendix \ref{sec:R4TrilinearEstimate}%
), we see that the relation between $\mathbb{T}^{3}$ and $\mathbb{T}^{4}$ is
very different from the relation between $\mathbb{R}^{3}$ and $\mathbb{R}%
^{4} $. This is the reason why $\mathbb{T}^{4}$ stands out and we have to
use $U$-$V$ spaces here.
\end{remark}

The following tools will be used to prove Lemma \ref{Lem:T4TrilinearWithUV}.

\begin{lemma}[Strichartz estimate on $\mathbb{T}^{4}$ \protect\cite{BD,KV}]
\label{estimate:T^3 strichartz}For $p>3$, 
\begin{equation}
\Vert P_{\leqslant M}u\Vert _{L_{t,x}^{p}}\lesssim M^{2-\frac{6}{p}}\Vert
u\Vert _{Y^{0}}  \label{eqn:T^4 p>3 u-v str}
\end{equation}
\end{lemma}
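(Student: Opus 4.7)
The plan is to derive this $U$-$V$ Strichartz bound from the corresponding scale-invariant free-solution Strichartz estimate on $\mathbb{T}^4$ via the standard $U^{p}$/$V^{p}$ transfer principle. The deep analytic content lies entirely in the free-evolution bound, which in turn rests on the $l^{2}$-decoupling theorem of Bourgain--Demeter; everything else is a formal consequence of the atomic structure of $U^{p}_{\Delta}$ and the embedding $V^{2}_{\Delta}\hookrightarrow U^{p}_{\Delta}$ valid for $p>2$.

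First I would invoke the sharp scale-invariant torus Strichartz estimate
\begin{equation*}
\Vert P_{\leqslant M}\, e^{it\Delta} f\Vert_{L^{p}_{t,x}([0,1]\times\mathbb{T}^{4})}\lesssim M^{2-\frac{6}{p}}\Vert f\Vert_{L^{2}_{x}(\mathbb{T}^{4})},\qquad p>3,
\end{equation*}
proved in \cite{BD,KV} by reducing to the $l^{2}$-decoupling theorem applied to the truncated paraboloid in $\mathbb{R}^{5}$. The critical exponent for the $\mathbb{T}^{4}$ paraboloid is exactly $p=3$, where an $\varepsilon$-loss is necessary, while above it no loss is needed; interpolating with the trivial $L^{\infty}_{t}L^{2}_{x}$ bound then covers the full range $p>3$.

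Next I would transfer this estimate to $U^{p}_{\Delta}$. For any $U^{p}_{\Delta}$-atom $a=\sum_{j}\chi_{[t_{j-1},t_{j})}(t)\, e^{it\Delta}\phi_{j}$ with $\sum_{j}\Vert\phi_{j}\Vert_{L^{2}}^{p}\leqslant 1$, disjointness of the time intervals gives
\begin{equation*}
\Vert P_{\leqslant M} a\Vert_{L^{p}_{t,x}}^{p}=\sum_{j}\Vert \chi_{[t_{j-1},t_{j})} P_{\leqslant M} e^{it\Delta}\phi_{j}\Vert_{L^{p}_{t,x}}^{p}\lesssim M^{p(2-\frac{6}{p})}\sum_{j}\Vert\phi_{j}\Vert_{L^{2}}^{p}\leqslant M^{p(2-\frac{6}{p})}.
\end{equation*}
The atomic definition of $U^{p}_{\Delta}$ then yields $\Vert P_{\leqslant M} u\Vert_{L^{p}_{t,x}}\lesssim M^{2-\frac{6}{p}}\Vert u\Vert_{U^{p}_{\Delta}}$ for any $u$. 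Since $p>3>2$, the standard continuous embedding $V^{2}_{\Delta}\hookrightarrow U^{p}_{\Delta}$ (in its right-continuous variant, see \cite{KTV,HTT}) upgrades this to $\Vert P_{\leqslant M} u\Vert_{L^{p}_{t,x}}\lesssim M^{2-\frac{6}{p}}\Vert u\Vert_{V^{2}_{\Delta}L^{2}_{x}}$.

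Finally, to convert from $V^{2}_{\Delta}L^{2}_{x}$ to the specific $Y^{0}$ norm defined in the paper, I would apply Plancherel in $x$ to each partition increment $e^{-it_{k}\Delta}u(t_{k})-e^{-it_{k-1}\Delta}u(t_{k-1})$ and use the elementary monotonicity $\sup_{P}\sum_{\xi}\leqslant \sum_{\xi}\sup_{P}$ to obtain
\begin{equation*}
\Vert u\Vert_{V^{2}_{\Delta}L^{2}_{x}}^{2}\leqslant \sum_{\xi\in\mathbb{Z}^{4}}\Vert \widehat{e^{-it\Delta}u(t,\cdot)}(\xi)\Vert_{V^{2}_{t}}^{2}=\Vert u\Vert_{Y^{0}}^{2}.
\end{equation*}
Chaining the three estimates proves the lemma. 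I do not expect any genuine obstacle, since the hard work is entirely offloaded to the Bourgain--Demeter decoupling theorem; the only point requiring care is that the last comparison goes in the correct direction, but the sup-sum versus sum-sup inequality makes this automatic.
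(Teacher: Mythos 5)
Your argument is correct, and it is essentially the standard route: the paper states this lemma without proof, citing \cite{BD,KV}, and those references obtain it exactly as you do, by combining the scale-invariant free Strichartz estimate (from $l^{2}$-decoupling) with the $U^{p}$ atomic transfer principle, the embedding $V^{2}_{\Delta}\hookrightarrow U^{p}_{\Delta}$ for $p>2$, and the comparison $\Vert u\Vert_{V^{2}_{\Delta}L^{2}_{x}}\leqslant \Vert u\Vert_{Y^{0}}$. The only imprecision is your side remark that interpolation with $L^{\infty}_{t}L^{2}_{x}$ yields the full range $p>3$ (naive interpolation does not remove the $\varepsilon$-loss at the critical exponent; the loss-free supercritical estimate is what \cite{KV} actually establishes), but since you invoke that estimate from the literature rather than prove it, this does not affect the validity of your proof.
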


\begin{corollary}[Strichartz estimates on $\mathbb{T}^{4}$ with noncentered
frequency localization]
\label{cor:str with noncentered cube}Let $M$ be a dyadic value and let $Q$
be a (possibly) noncentered $M$-cube in Fourier space 
\begin{equation*}
Q=\left\{ \xi _{0}+\eta :\left\vert \eta \right\vert \leq M\right\} \,.
\end{equation*}
Let $P_{Q}$ be the corresponding Littlewood-Paley projection, then by the
Galilean invariance, we have 
\begin{equation}
\Vert P_{Q}u\Vert _{L_{t,x}^{p}}\lesssim M^{2-\frac{6}{p}}\Vert P_{Q}u\Vert
_{Y^{0}}\text{, }p>3.  \label{eqn:T^4 p>3 u-v str with Galinean}
\end{equation}
The net effect of this observation is that we pay a factor of only $M^{2- 
\frac{6}{p}}$, when applying (\ref{eqn:T^4 p>3 u-v str}).
\end{corollary}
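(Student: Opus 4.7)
The plan is to conjugate away the frequency center $\xi_0$ using the standard Galilean transform for the free Schr\"odinger flow, reducing (\ref{eqn:T^4 p>3 u-v str with Galinean}) to the centered case (\ref{eqn:T^4 p>3 u-v str}) of Lemma \ref{estimate:T^3 strichartz}. Concretely, I would introduce the operator
\[
\mathcal{G}_{\xi_0}u(t,x)\defeq e^{-it|\xi_0|^2}\,e^{i\xi_0\cdot x}\,u(t,\,x-2t\xi_0),
\]
and verify three properties: (i) on the Fourier side a direct substitution yields $\widehat{\mathcal{G}_{\xi_0}u}(t,\eta)=e^{-it|\xi_0|^2-2it(\eta-\xi_0)\cdot\xi_0}\,\widehat{u}(t,\eta-\xi_0)$, so if $u=P_Q u$ then $\mathcal{G}_{\xi_0}u$ is Fourier-supported in the \emph{centered} cube $\{|\eta|\leqslant M\}$, i.e.\ $\mathcal{G}_{\xi_0}P_Q u = P_{\leqslant M}\mathcal{G}_{\xi_0}P_Q u$; (ii) $\mathcal{G}_{\xi_0}$ is an isometry of $L^p_{t,x}$, since both modulations have unit modulus and the spatial translation $x\mapsto x-2t\xi_0$ is an isometry of $L^p(\mathbb{T}^{4})$ at each fixed $t$; (iii) $\mathcal{G}_{\xi_0}$ is an isometry of $Y^0$.

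The only nontrivial item is (iii), which I would check by a direct phase computation. Using the algebraic identity $|\eta|^2-|\xi_0|^2-2(\eta-\xi_0)\cdot\xi_0=|\eta-\xi_0|^2$, the phases collapse to give
\[
\widehat{e^{-it\Delta}\mathcal{G}_{\xi_0}u(t,\cdot)}(\eta)=e^{it|\eta|^2}\,\widehat{\mathcal{G}_{\xi_0}u}(t,\eta)=e^{it|\eta-\xi_0|^2}\,\widehat{u}(t,\eta-\xi_0)=\widehat{e^{-it\Delta}u(t,\cdot)}(\eta-\xi_0),
\]
so $\mathcal{G}_{\xi_0}$ maps free solutions to free solutions and its effect on the free-evolution profile is a pure shift $\eta\mapsto\eta-\xi_0$. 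Since $\|\cdot\|_{Y^0}$ is the $\ell^2_\eta V^2_t$ norm of that profile, relabeling the sum yields $\|\mathcal{G}_{\xi_0}u\|_{Y^0}=\|u\|_{Y^0}$.

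With (i)--(iii) in hand, the corollary is immediate: apply Lemma \ref{estimate:T^3 strichartz} to $\mathcal{G}_{\xi_0}P_Q u$, which by (i) is localized in the centered $M$-cube, to obtain
\[
\|\mathcal{G}_{\xi_0}P_Q u\|_{L^p_{t,x}}\lesssim M^{2-\frac{6}{p}}\|\mathcal{G}_{\xi_0}P_Q u\|_{Y^0},
\]
and then undo the transformation via (ii) and (iii) to recover (\ref{eqn:T^4 p>3 u-v str with Galinean}). I do not anticipate any real obstacle: the argument is pure bookkeeping of Galilean invariance. The one point that requires care is the choice of the time-dependent phase $e^{-it|\xi_0|^2}$ in $\mathcal{G}_{\xi_0}$, which is precisely what produces the cancellation in (iii) and ensures that a free solution is mapped to a free solution.
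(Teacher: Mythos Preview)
Your approach via the Galilean transform is exactly the standard one, and it is what the paper defers to (the paper's proof is simply a pointer to \cite[Corollary 5.18]{CJInvent}). The computations in (ii) and (iii) are correct.

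There is, however, a sign slip in (i). Your Fourier formula
\[
\widehat{\mathcal{G}_{\xi_0}u}(t,\eta)=e^{-it|\xi_0|^2-2it(\eta-\xi_0)\cdot\xi_0}\,\widehat{u}(t,\eta-\xi_0)
\]
is correct, but the conclusion drawn from it is not: if $\widehat{u}(t,\cdot)$ is supported in $Q=\{\xi:|\xi-\xi_0|\leqslant M\}$, then $\widehat{u}(t,\eta-\xi_0)\neq 0$ forces $|\eta-2\xi_0|\leqslant M$, so $\mathcal{G}_{\xi_0}P_Q u$ lands in the cube centered at $2\xi_0$, not at the origin. In other words, your $\mathcal{G}_{\xi_0}$ shifts frequencies by $+\xi_0$, not $-\xi_0$. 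The remedy is trivial: apply $\mathcal{G}_{-\xi_0}$ instead (or equivalently redefine $\mathcal{G}_{\xi_0}$ with $e^{-i\xi_0\cdot x}$ and $x+2t\xi_0$). Items (ii) and (iii) are unchanged under this sign flip, and the argument then closes exactly as you wrote.
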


\begin{proof}
Such a fact is well-known and widely used. Readers interested in a version
of the proof can see \cite[Corollary 5.18]{CJInvent}.
\end{proof}

\subsubsection{Proof of Lemma \protect\ref{Lem:T4TrilinearWithUV}}

We first present the proof of the sharp estimate (\ref%
{eqn:T4UVTrilinearEstimateHigh}), then (\ref{eqn:T4UVTrilinearEstimateLow})

\paragraph{Proof of (\protect\ref{eqn:T4UVTrilinearEstimateHigh})}

Let $I$ denote the integral in (\ref{eqn:T4UVTrilinearEstimateHigh}).
Decompose the 4 factors into Littlewood-Paley pieces so that 
\begin{equation*}
I=\sum_{M_{1},M_{2},M_{3},M}I_{M_{1},M_{2},M_{3},M}
\end{equation*}%
where 
\begin{equation*}
I_{M_{1},M_{2},M_{3},M}=\iint_{x,t}u_{1,M_{1}}u_{2,M_{2}}u_{3,M_{3}}g_{M}dxdt
\end{equation*}%
with $u_{j,M_{j}}=P_{M_{j}}u_{j}$ and $g_{M}=P_{M}g$. As $M_{2}$, $M_{3}$
and $M$ are symmetric, it suffices to take care of the $M_{1}\sim M_{2}\geq
M_{3}\geq M$ case. Decompose the $M_{1}$ and $M_{2}$ dyadic spaces into $%
M_{3}$ size cubes, then 
\begin{eqnarray*}
I_{1A} &\lesssim &\sum_{\substack{ M_{1},M_{2},M_{3},M  \\ M_{1}\sim
M_{2}\geq M_{3}\geq M}}\sum_{Q}\left\Vert
P_{Q}u_{1,M_{1}}P_{Q_{c}}u_{2,M_{2}}u_{3,M_{3}}g_{M}\right\Vert
_{L_{t,x}^{1}} \\
&\lesssim &\sum_{\substack{ M_{1},M_{2},M_{3},M  \\ M_{1}\sim M_{2}\geq
M_{3}\geq M}}\sum_{Q}\left\Vert P_{Q}u_{1,M_{1}}\right\Vert _{L_{t,x}^{\frac{%
10}{3}}}\left\Vert P_{Q_{c}}u_{2,M_{2}}\right\Vert _{L_{t,x}^{\frac{10}{3}%
}}\left\Vert u_{3,M_{3}}\right\Vert _{L_{t,x}^{\frac{10}{3}}}\left\Vert
g_{M}\right\Vert _{L_{t,x}^{10}}
\end{eqnarray*}%
Use (\ref{eqn:T^4 p>3 u-v str}) and (\ref{eqn:T^4 p>3 u-v str with Galinean}%
) 
\begin{eqnarray*}
&\lesssim &\sum_{\substack{ M_{1},M_{2},M_{3},M  \\ M_{1}\sim M_{2}\geq
M_{3}\geq M}}\sum_{Q}M_{3}^{\frac{2}{5}}\left\Vert
P_{Q}u_{1,M_{1}}\right\Vert _{Y^{0}}\left\Vert u_{3,M_{3}}\right\Vert
_{Y^{0}}M_{3}^{\frac{1}{5}}\left\Vert P_{Q_{c}}u_{2,M_{2}}\right\Vert
_{Y^{0}}M^{\frac{7}{5}}\left\Vert g_{M}\right\Vert _{Y^{0}}) \\
&\lesssim &\sum_{\substack{ M_{1},M_{2},M_{3},M  \\ M_{1}\sim M_{2}\geq
M_{3}\geq M}}M_{3}^{\frac{3}{5}}M^{\frac{7}{5}}\left\Vert g_{M}\right\Vert
_{Y^{0}}\left\Vert u_{3,M_{3}}\right\Vert _{Y^{0}}\sum_{Q}\left\Vert
P_{Q}u_{1,M_{1}}\right\Vert _{Y^{0}}\left\Vert
P_{Q_{c}}u_{2,M_{2}}\right\Vert _{Y^{0}}
\end{eqnarray*}%
Applying Cauchy-Schwarz to sum in $Q$, 
\begin{eqnarray*}
&\lesssim &\sum_{\substack{ M_{1},M_{2},M_{3},M  \\ M_{1}\sim M_{2}\geq
M_{3}\geq M}}M_{3}^{\frac{3}{5}}M^{\frac{7}{5}}\left\Vert
u_{1,M_{1}}\right\Vert _{Y^{0}}\left\Vert u_{2,M_{2}}\right\Vert
_{Y^{0}}\left\Vert u_{3,M_{3}}\right\Vert _{Y^{0}}\left\Vert
g_{M}\right\Vert _{Y^{0}} \\
&\lesssim &\sum_{\substack{ M_{1},M_{2}  \\ M_{1}\sim M_{2}}}%
M_{2}M_{1}^{-1}\left\Vert u_{1,M_{1}}\right\Vert _{Y^{-1}}\left\Vert
u_{2,M_{2}}\right\Vert _{Y^{1}}\sum_{\substack{ M_{3},M  \\ M_{1}\sim
M_{2}\geq M_{3}\geq M}}M_{3}^{-\frac{2}{5}}M^{\frac{2}{5}}\left\Vert
u_{3,M_{3}}\right\Vert _{Y^{1}}\left\Vert g_{M}\right\Vert _{Y^{1}}
\end{eqnarray*}%
We are done by Schur's test.

\bigskip

\paragraph{Proof of (\protect\ref{eqn:T4UVTrilinearEstimateLow})}

We reuse the set up in the proof of (\ref{eqn:T4UVTrilinearEstimateHigh}).
However, due to the symmetry assumption $M_{1}\geqslant M_{2}\geqslant M_{3}$
on the frequencies in the proof of (\ref{eqn:T4UVTrilinearEstimateHigh}), we
cannot simply assume $P_{\leqslant M_{0}}$ lands on $u_{2}$. The worst /
least gain case here would be $u_{1}$ is still put in $Y^{-1}$ while $%
P_{\leqslant M_{0}}$ is applied to $u_{3}$. Thus we will prove estimate (\ref%
{eqn:T4UVTrilinearEstimateLow}) subject to the extra localization that $%
P_{\leqslant M_{0}}$ is applied on $u_{3}$. By symmetry in $M_{2}$ and $M$,
it suffices to take care of Case A: $M_{1}\sim M_{2}\geqslant M_{3}\geqslant
M$ and Case B: $M_{1}\sim M_{2}\geqslant M\geqslant M_{3}$. We will get a $%
T^{\frac{1}{4}}M_{0}^{\frac{3}{5}}$ in Case A and a $T^{\frac{1}{7}}M_{0}^{%
\frac{3}{7}}$ in Case B. Since (\ref{eqn:T4UVTrilinearEstimateLow}) is
nowhere near optimal and we just need it to hold with some powers of $T$ and 
$M_{0}$, there is no need to match these powers or pursue the best power in
these cases.

\subparagraph{Case A of (\protect\ref{eqn:T4UVTrilinearEstimateLow}): $%
M_{1}\sim M_{2}\geqslant M_{3}\geqslant M$}

Decompose the $M_{1}$ and $M_{2}$ dyadic spaces into $M_{3}$ size cubes, 
\begin{eqnarray*}
I_{M_{1},M_{2},M_{3},M} &\leqslant &\sum_{Q}\left\Vert
P_{Q}u_{1,M_{1}}P_{Q_{c}}u_{2,M_{2}}\left( P_{\leqslant
M_{0}}u_{3,M_{3}}\right) g_{M}\right\Vert _{L_{t,x}^{1}} \\
&\leqslant &\sum_{Q}\left\Vert P_{Q}u_{1,M_{1}}g_{M}\right\Vert
_{L_{t,x}^{2}}\left\Vert P_{Q_{C}}u_{2,M_{2}}\right\Vert
_{L_{t,x}^{4}}\left\Vert P_{\leqslant M_{0}}u_{3,M_{3}}\right\Vert
_{L_{t,x}^{4}}
\end{eqnarray*}%
where 
\begin{equation*}
\left\Vert P_{\leqslant M_{0}}u_{3,M_{3}}\right\Vert _{L_{t,x}^{4}}\leqslant
T^{\frac{1}{4}}M_{0}^{\frac{3}{5}}M_{3}^{\frac{2}{5}}\left\Vert P_{\leqslant
M_{0}}u_{3,M_{3}}\right\Vert _{L_{t}^{\infty }L_{x}^{2}}\lesssim T^{\frac{1}{%
4}}M_{0}^{\frac{3}{5}}M_{3}^{\frac{2}{5}}\left\Vert P_{\leqslant
M_{0}}u_{3,M_{3}}\right\Vert _{Y^{0}}
\end{equation*}%
Use (\ref{eqn:T^4 p>3 u-v str}) and (\ref{eqn:T^4 p>3 u-v str with Galinean}%
), 
\begin{equation*}
I_{M_{1},M_{2},M_{3},M}\lesssim T^{\frac{1}{4}}M_{0}^{\frac{3}{5}%
}\sum_{Q}\left( M\left\Vert P_{Q}u_{1,M_{1}}\right\Vert _{Y^{0}}\left\Vert
g_{M}\right\Vert _{Y^{0}}\right) M_{3}^{\frac{1}{2}}\left\Vert
P_{Q_{C}}u_{2,M_{2}}\right\Vert _{Y^{0}}M_{3}^{\frac{2}{5}}\left\Vert
P_{\leqslant M_{0}}u_{3,M_{3}}\right\Vert _{Y^{0}}
\end{equation*}%
Note that, in the above, we actually used a bilinear estimate for the 1st
factor but did not record or use the bilinear gain factor. Cauchy-Schwarz to
sum in $Q$, 
\begin{equation*}
I_{M_{1},M_{2},M_{3},M}\lesssim T^{\frac{1}{4}}M_{0}^{\frac{3}{5}}\left\Vert
u_{1,M_{1}}\right\Vert _{Y^{0}}\left\Vert g_{M}\right\Vert
_{Y^{1}}\left\Vert u_{2,M_{2}}\right\Vert _{Y^{0}}M_{3}^{\frac{9}{10}%
}\left\Vert P_{\leqslant M_{0}}u_{3,M_{3}}\right\Vert _{Y^{0}}
\end{equation*}%
Thus, summing in $M$ non-optimally gives 
\begin{eqnarray*}
I_{1A} &\lesssim &T^{\frac{1}{4}}M_{0}^{\frac{3}{5}}\left\Vert g\right\Vert
_{Y^{1}}\sum_{\substack{ M_{1},M_{2},M_{3}  \\ M_{1}\sim M_{2}\geqslant
M_{3} }}\left\Vert u_{1,M_{1}}\right\Vert _{Y^{0}}\left\Vert
u_{2,M_{2}}\right\Vert _{Y^{0}}M_{3}^{\frac{9}{10}}\log M_{3}\left\Vert
P_{\leqslant M_{0}}u_{3,M_{3}}\right\Vert _{Y^{0}} \\
&\lesssim &T^{\frac{1}{4}}M_{0}^{\frac{3}{5}}\left\Vert g\right\Vert
_{Y^{1}}\sum_{\substack{ M_{1},M_{2},M_{3}  \\ M_{1}\sim M_{2}\geqslant
M_{3} }}\left\Vert u_{1,M_{1}}\right\Vert _{Y^{0}}\left\Vert
u_{2,M_{2}}\right\Vert _{Y^{0}}\frac{M_{3}^{\frac{9}{10}}\log M_{3}}{M_{3}}%
\left\Vert P_{\leqslant M_{0}}u_{3,M_{3}}\right\Vert _{Y^{1}}
\end{eqnarray*}%
Again, summing in $M_{3}$ non-optimally and swapping a derivative between $%
u_{1}$ and $u_{2}$ give 
\begin{eqnarray*}
I_{1A} &\lesssim &T^{\frac{1}{4}}M_{0}^{\frac{3}{5}}\left\Vert g\right\Vert
_{Y^{1}}\left\Vert P_{\leqslant M_{0}}u_{3}\right\Vert _{Y^{1}}\sum 
_{\substack{ M_{1},M_{2}  \\ M_{1}\sim M_{2}}}\left\Vert
u_{1,M_{1}}\right\Vert _{Y^{-1}}\left\Vert u_{2,M_{2}}\right\Vert _{Y^{1}} \\
&\lesssim &T^{\frac{1}{4}}M_{0}^{\frac{3}{5}}\left\Vert u_{1}\right\Vert
_{Y^{-1}}\left\Vert u_{2}\right\Vert _{Y^{1}}\left\Vert P_{\leqslant
M_{0}}u_{3}\right\Vert _{Y^{1}}\left\Vert g\right\Vert _{Y^{1}}
\end{eqnarray*}

\subparagraph{Case B of (\protect\ref{eqn:T4UVTrilinearEstimateLow}): $%
M_{1}\sim M_{2}\geqslant M\geqslant M_{3}$}

Sum $M_{3}$ up first, we then consider 
\begin{equation*}
I_{M_{1},M_{2},M}=\iint_{x,t}u_{1,M_{1}}u_{2,M_{2}}\left( P_{\leqslant
M}P_{\leqslant M_{0}}u_{3}\right) g_{M}dxdt\text{.}
\end{equation*}
Decompose the $M_{1}$ and $M_{2}$ dyadic spaces into $M$ size cubes, 
\begin{eqnarray*}
I_{M_{1},M_{2},M} &\leqslant &\sum_{Q}\left\Vert
P_{Q}u_{1,M_{1}}P_{Q_{c}}u_{2,M_{2}}\left( P_{\leqslant M}P_{\leqslant
M_{0}}u_{3}\right) g_{M}\right\Vert _{L_{t,x}^{1}} \\
&\leqslant &\sum_{Q}\left\Vert P_{Q}u_{1,M_{1}}\right\Vert _{L_{t,x}^{\frac{%
7 }{2}}}\left\Vert P_{Q_{C}}u_{2,M_{2}}\right\Vert _{L_{t,x}^{\frac{7}{2}
}}\left\Vert P_{\leqslant M}P_{\leqslant M_{0}}u_{3}\right\Vert
_{L_{t,x}^{7}}\left\Vert g_{M}\right\Vert _{L_{t,x}^{\frac{7}{2}}}
\end{eqnarray*}
where 
\begin{eqnarray*}
\left\Vert P_{\leqslant M}P_{\leqslant M_{0}}u_{3}\right\Vert _{L_{t,x}^{7}}
&\leqslant &T^{\frac{1}{7}}\left\Vert P_{\leqslant M}P_{\leqslant
M_{0}}u_{3}\right\Vert _{L_{x}^{7}} \\
&\lesssim &T^{\frac{1}{7}}M_{0}^{\frac{3}{7}}\left\Vert P_{\leqslant
M}P_{\leqslant M_{0}}u_{3}\right\Vert _{L_{t}^{\infty }H_{x}^{1}} \\
&\lesssim &T^{\frac{1}{7}}M_{0}^{\frac{3}{7}}\left\Vert P_{\leqslant
M}P_{\leqslant M_{0}}u_{3}\right\Vert _{Y^{1}}\text{.}
\end{eqnarray*}
Apply (\ref{eqn:T^4 p>3 u-v str}) and (\ref{eqn:T^4 p>3 u-v str with
Galinean}), 
\begin{equation*}
I_{M_{1},M_{2},M}\lesssim T^{\frac{1}{7}}M_{0}^{\frac{3}{7}}\left\Vert
P_{\leqslant M}P_{\leqslant M_{0}}u_{3}\right\Vert _{Y^{1}}\sum_{Q}M^{\frac{%
2 }{7}}\left\Vert P_{Q}u_{1,M_{1}}\right\Vert _{Y^{0}}M^{\frac{2}{7}
}\left\Vert P_{Q_{C}}u_{2,M_{2}}\right\Vert _{Y^{0}}M^{\frac{2}{7}
}\left\Vert g_{M}\right\Vert _{Y^{0}}
\end{equation*}
Apply Cauchy-Schwarz to sum in $Q,$ 
\begin{equation*}
I_{M_{1},M_{2},M}\lesssim T^{\frac{1}{7}}M_{0}^{\frac{3}{7}}\left\Vert
P_{\leqslant M}P_{\leqslant M_{0}}u_{3}\right\Vert _{Y^{1}}M^{-\frac{1}{7}
}\left\Vert u_{1,M_{1}}\right\Vert _{Y^{0}}\left\Vert u_{2,M_{2}}\right\Vert
_{Y^{0}}\left\Vert g_{M}\right\Vert _{Y^{1}}
\end{equation*}
Thus, swapping a derivative between $u_{1}$ and $u_{2}$ gives 
\begin{equation*}
I_{1B}\lesssim T^{\frac{1}{7}}M_{0}^{\frac{3}{7}}\left\Vert P_{\leqslant
M_{0}}u_{3}\right\Vert _{Y^{1}}\sum_{\substack{ M_{1},M_{2},M  \\ M_{1}\sim
M_{2}\geqslant M}}M^{-\frac{1}{7}}\left\Vert u_{1,M_{1}}\right\Vert
_{Y^{-1}}\left\Vert u_{2,M_{2}}\right\Vert _{Y^{1}}\left\Vert
g_{M}\right\Vert _{Y^{1}}
\end{equation*}
Burning that $\frac{1}{7}$-derivative to sum in $M$ and then applying
Cauchy-Schwarz in $M_{1}$, we have 
\begin{equation*}
I_{1B}\lesssim T^{\frac{1}{7}}M_{0}^{\frac{3}{7}}\left\Vert P_{\leqslant
M_{0}}u_{3}\right\Vert _{Y^{1}}\left\Vert u_{1}\right\Vert
_{Y^{-1}}\left\Vert u_{2}\right\Vert _{Y^{1}}\left\Vert g\right\Vert _{Y^{1}}
\end{equation*}
as needed.

\section{Uniqueness for GP Hierarchy (\protect\ref{Hierarchy:T^4 cubic GP})
and the Proof of Theorem \protect\ref{THM:MainUniquenessTHM} - Set Up\label%
{Sec:GP Uniqueness}}

\begin{theorem}
\label{Thm:GP uniqueness}Let $\Gamma =\left\{ \gamma ^{(k)}\right\} \in
\oplus _{k\geqslant 1}C\left( \left[ 0,T_{0}\right] ,\mathcal{L}%
_{k}^{1}\right) $ be a solution to (\ref{Hierarchy:T^4 cubic GP}) in $\left[
0,T_{0}\right] $ in the sense that

(a) $\Gamma $ is admissible in the sense of Definition \ref{Def:Admissible}.

(b) $\Gamma $ satisfies the kinetic energy condition that $\exists C_{0}>0$
s.t. 
\begin{equation*}
\sup_{t\in \left[ 0,T_{0}\right] }\left( \prod\limits_{j=1}^{k}\left\langle
\nabla _{x_{j}}\right\rangle \right) \gamma ^{(k)}\left( t\right) \left(
\prod\limits_{j=1}^{k}\left\langle \nabla _{x_{j}}\right\rangle \right)
\leqslant C_{0}^{2k}
\end{equation*}
Then there is a threshold $\eta (C_{0})>0$ such that the solution is unique
in $[0,T_{0}]$ provided 
\begin{equation*}
\sup_{t\in \lbrack 0,T_{0}]}\limfunc{Tr}\left(
\prod\limits_{j=1}^{k}P_{>M}^{j}\left\langle \nabla _{x_{j}}\right\rangle
\right) \gamma ^{(k)}(t)\left( \prod\limits_{j=1}^{k}P_{>M}^{j}\left\langle
\nabla _{x_{j}}\right\rangle \right) \leqslant \eta ^{2k},
\end{equation*}%
for some frequency $M$. Our proof shows that $\eta(C_0)$ can be $\left(
100CC_{0}\right) ^{-2}$ with $C$ being a universal constant depending on the 
$U$-$V$ estimate constants and the Sobolev constants. The frequency
threshold $M$ is allowed to depend on $\gamma ^{(k)}$ (the particular
solution under consideration) but must apply uniformly on $[0,T_{0}]$.
\end{theorem}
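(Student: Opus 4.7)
Let $\Gamma_1,\Gamma_2$ be two admissible solutions of (\ref{Hierarchy:T^4 cubic GP}) both satisfying the kinetic energy bound and the high-frequency smallness hypothesis. Write $\Gamma=\Gamma_1-\Gamma_2$ and set $\gamma^{(k)}=\gamma_1^{(k)}-\gamma_2^{(k)}$. The plan is to show that, on a short subinterval $[0,T_*]\subset[0,T_0]$ with $T_*=T_*(C_0,\eta)$, one has $\gamma^{(k)}\equiv 0$ for every $k$, and then iterate the argument finitely many times to cover $[0,T_0]$. For each fixed $k$, on $[0,T_*]$ I iterate the Duhamel formula associated with (\ref{Hierarchy:T^4 cubic GP}) $k_c$ times (with $k_c\to\infty$ at the end), so that the free term vanishes at time zero and $\gamma^{(k)}(t)$ is expressed as a sum of roughly $(2k)(2k+2)\cdots(2k+2k_c-2)$ Duhamel-Born terms at level $k+k_c$, each an iterated time integral over a simplex.

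The first main step is combinatorial. Rather than crudely expanding each time integral to $[0,T_*]^{k_c}$, I invoke the extended Klainerman--Machedon board game of \S\ref{Sec:ElaboratedKMBoardGame}: group the $\sim(2k_c)!$ summands into \emph{tamed classes} via the reworked signed acceptable moves together with the wild moves, and for each tamed class read off, by the algorithm of \S\ref{Sec:Find Integration Limits}, the single iterated time-integration domain $D_m\subset[0,T_*]^{k_c}$ attached to the reference representative. This is what makes $U$-$V$ estimation possible, because each tamed class now presents itself as one genuine nested Duhamel integral in the variables $(t_1,\ldots,t_{k_c})$ rather than a disconnected union of simplices.

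Next, for each tamed class I apply the quantum de Finetti theorem of \cite{CHPS} to $\gamma_j^{(k+k_c)}(s)$ at the innermost time to replace it by a superposition $\int |\phi\rangle\langle\phi|^{\otimes(k+k_c)}\,d\mu_{s,j}(\phi)$ with $\mu_{s,j}$ supported on the $H^1$ unit sphere scaled by $C_0$. This reduces every Duhamel-like integration in the class to a trilinear space-time integral of functions $e^{it\Delta}\phi$ or of Duhamel iterates of them. On each such trilinear integral I apply Lemma \ref{Lem:T4TrilinearWithUV} and Lemma \ref{Lem:T4TrilinearWithUVH1}, splitting one factor by $P_{\leqslant M}+P_{>M}$: the $P_{>M}$ contribution absorbs a factor $\eta$ from the HUFL/UTFL hypothesis (through Lemma \ref{Cor:UTFLforNLS}), while the $P_{\leqslant M}$ contribution absorbs a factor $T_*^{1/7}M^{3/5}$ from (\ref{eqn:H-1-T4TrilinearEstimateWithGain})--(\ref{eqn:H1-T4TrilinearEstimateWithGain}). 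Thus every triple of iterated Duhamel terms produces a constant $C(C_0)(\eta+T_*^{1/7}M^{3/5})$; choosing $\eta<(100CC_0)^{-2}$ and $T_*$ small enough (depending on $M$), the constant is $<1/2$.

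The number of tamed classes is bounded by $C^{k_c}$ (as in the original KM board game), which is the input needed for the geometric series in $k_c$ to converge against the trilinear smallness above. Summing over classes then gives $\|\gamma^{(k)}(t)\|_{\mathcal L^1_k}\leqslant 2^{-k_c}\cdot(\text{uniform bound})\to 0$ as $k_c\to\infty$, forcing $\gamma^{(k)}\equiv 0$ on $[0,T_*]$ for every $k$. Finally, the length $T_*$ depends only on $C_0,\eta,M$, so we slide the window and repeat to reach $T_0$. The step I expect to be the true obstacle is the marriage of combinatorics with space-time norms in the third paragraph above: the $U$-$V$ norms are extremely sensitive to the shape of the time domain because they control time derivatives, and it is precisely the explicit single-iterated-integral representation of $D_m$ coming out of the extended KM board game that lets the trilinear estimates of \S\ref{Sec:UVandTrilinear} land on each tamed class without wasting the half derivative in time we cannot afford at the critical regularity.
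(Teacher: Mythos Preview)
Your proposal follows the paper's approach closely and correctly identifies all the structural ingredients: the extended KM board game producing explicit iterated-integral domains compatible with $U$-$V$ norms, the quantum de Finetti representation at the innermost time, and the trilinear estimates with a $P_{\leqslant M}+P_{>M}$ split on one factor. However, there is one technical point you gloss over that is not innocuous. It is \emph{not} true that ``every triple of iterated Duhamel terms produces a constant $C(C_0)(\eta+T_*^{1/7}M^{3/5})$.'' A coupling only gains this smallness when the cubic term it generates contains at least one free factor $e^{it\Delta}\phi$ (what the paper calls an \emph{unclogged} coupling, Definition~\ref{def:type of coupling}); when all three factors are themselves Duhamel iterates (a \emph{congested} coupling), you must fall back on the crude estimates (\ref{estimate:Duhamel mlinear-3})--(\ref{estimate:Duhamel mlinear-4}) and obtain only $CC_0^2$ with no smallness. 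The paper closes this via the counting Lemma~\ref{Lem:unclogged couplings}: since the $k$ couplings together consume $2k-2$ free $U\phi$ factors and each unclogged coupling consumes at most three, at least $\tfrac{2}{3}k$ of them must be unclogged. This is why the exponent in Proposition~\ref{Prop:KeyUniquenessEstimate} is $\tfrac{2}{3}k$ rather than $k$, and why the threshold comes out as $\eta\sim(CC_0)^{-2}$.

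Two smaller corrections: the high-frequency smallness does not enter through Lemma~\ref{Cor:UTFLforNLS} (that lemma concerns NLS solutions, not the hierarchy) but through a Chebyshev argument on the de Finetti measures $d\mu_{j,t}$ that confines their support to the set (\ref{set:spt of qdF Measure}); and the final norm in which one shows $\gamma^{(1)}\to 0$ is $\|\langle\nabla_{x_1}\rangle^{-1}\langle\nabla_{x_1'}\rangle^{-1}\,\cdot\,\|_{L_{t_1}^\infty L^2_{x_1,x_1'}}$, not the trace-class norm.
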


Here, we have intentionally stated Theorem \ref{Thm:GP uniqueness} before
writing out the definition of admissible / Definition \ref{Def:Admissible}
to bring up readers' attention. For the purpose of only proving Theorem \ref%
{THM:MainUniquenessTHM}, Definition \ref{Def:Admissible} and its companion,
the quantum de Finette theorem / Theorem \ref{Thm:qdF} are, in fact, not
necessary. One could just apply the proof of Theorem \ref{Thm:GP uniqueness}
to the special case 
\begin{eqnarray}
\gamma ^{(k)}\left( t\right) &\equiv &\int_{L^{2}(\mathbb{T}^{4})}\left\vert
\phi \right\rangle \left\langle \phi \right\vert ^{\otimes k}d\mu _{t}(\phi )
\label{eqn:difference of NLS solution} \\
&\equiv &\dprod\limits_{j=1}^{k}u_{1}(t,x_{j})\bar{u}_{1}(t,x_{j}^{\prime
})-\dprod\limits_{j=1}^{k}u_{2}(t,x_{j})\bar{u}_{2}(t,x_{j}^{\prime }), 
\notag
\end{eqnarray}%
where $u_{1}$ and $u_{2}$ are two solutions to (\ref{NLS:T^4 cubic NLS}) and 
$\mu _{t}$ is the signed measure $\delta _{u_{1}}-$ $\delta _{u_{2}}$ on $%
L^{2}(\mathbb{T}^{4})$, to get that the difference is zero for all $k$ and
obtain a uniqueness theorem which is solely about solutions to (\ref{NLS:T^4
cubic NLS}) and is enough to conclude Theorems \ref{THM:MainUniquenessTHM}.
Readers unfamiliar with Theorem \ref{Thm:qdF} could first skip Definition %
\ref{Def:Admissible} and Theorem \ref{Thm:qdF}, put (\ref{eqn:difference of
NLS solution}) in the place of (\ref{eqn:qdF Rep for zero initial solution}%
), get to know how the GP hierarchy is involved and then come back to
Definition \ref{Def:Admissible} and Theorem \ref{Thm:qdF}. Once one
understands the role of the GP hierarchy in the proof, it is easy to see
that, due to Theorem \ref{Thm:qdF}, the more general theorem / Theorem \ref%
{Thm:GP uniqueness} costs nothing more and the origin of the current scheme
of proving NLS uniqueness using GP hierarchies is indeed Theorem \ref%
{Thm:qdF} as mentioned in the introduction. In fact, Theorem \ref{Thm:GP
uniqueness}, implies the following corollary.

\begin{corollary}
\label{Cor:weak uniqueness regarding NLS}Given an intial datum $u_{0}\in
H^{1}(\mathbb{T}^{4})$, there is at most one $C\left( \left[ 0,T_{0}\right]
,H_{x,\mathnormal{weak}}^{1}\right) $ solution $u$ to (\ref{NLS:T^4 cubic
NLS}) on $\mathbb{T}^{4}$ satisfying the following two properties:

(1) There is a $C_{0}>0$ such that 
\begin{equation*}
\sup_{t\in \lbrack 0,T_{0}]}\left\Vert u(t)\right\Vert _{H^{1}}\leqslant
C_{0}.
\end{equation*}

(2) There is some frequency $M$ such that 
\begin{equation}
\sup_{t\in \lbrack 0,T_{0}]}\Vert \nabla P_{\geqslant M}u(t)\Vert
_{L_{x}^{2}}\leqslant \eta ,  \label{eqn:utfl weak}
\end{equation}%
for the threshold $\eta (C_{0})>0$ concluded in Theorem \ref{Thm:GP
uniqueness}.
\end{corollary}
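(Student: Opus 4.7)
The plan is to derive the corollary from Theorem \ref{Thm:GP uniqueness} by lifting NLS uniqueness to the hierarchy level. Let $u_1,u_2$ be two $C([0,T_0],H^1_{x,\text{weak}})$ solutions of (\ref{NLS:T^4 cubic NLS}) sharing the initial datum $u_0$ and both satisfying (1) and (2). Following the remark immediately after Theorem \ref{Thm:GP uniqueness}, I form the signed sequence
\[
\gamma^{(k)}(t) = |u_1(t)\rangle\langle u_1(t)|^{\otimes k} - |u_2(t)\rangle\langle u_2(t)|^{\otimes k},
\]
exactly as in (\ref{eqn:difference of NLS solution}). Each of the two factorized pieces solves (\ref{Hierarchy:T^4 cubic GP}) via (\ref{eqn:solution to GP generated by NLS}), and the linearity of the hierarchy makes $\Gamma=\{\gamma^{(k)}\}$ a solution as well, admissibly represented through the signed de Finetti measure $\mu_t = \delta_{u_1(t)}-\delta_{u_2(t)}$ on $L^2(\mathbb{T}^4)$; as the authors note, the proof of Theorem \ref{Thm:GP uniqueness} runs directly on this special case without appealing to Definition \ref{Def:Admissible}.

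Next I would transfer the two quantitative hypotheses of Theorem \ref{Thm:GP uniqueness} from (1) and (2). Because $\gamma^{(k)}(t)$ is the difference of two nonnegative rank-one operators on $L^2(\mathbb{T}^{4k})$, the tensor-product structure gives the operator inequality
\[
\pm\Bigl(\textstyle\prod_{j=1}^k \langle \nabla_{x_j}\rangle\Bigr)\gamma^{(k)}(t)\Bigl(\textstyle\prod_{j=1}^k \langle \nabla_{x_j}\rangle\Bigr)\leq \|u_1(t)\|_{H^1}^{2k}+\|u_2(t)\|_{H^1}^{2k}\leq 2C_0^{2k},
\]
and similarly
\[
\operatorname{Tr}\Bigl(\textstyle\prod_j P_{>M}^j\langle\nabla_{x_j}\rangle\,\gamma^{(k)}(t)\,\prod_j P_{>M}^j\langle\nabla_{x_j}\rangle\Bigr)=\|P_{>M}\langle\nabla\rangle u_1(t)\|_{L^2}^{2k}-\|P_{>M}\langle\nabla\rangle u_2(t)\|_{L^2}^{2k},
\]
whose absolute value is at most $2\eta^{2k}$ under (2). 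Choosing $\eta(C_0)$ in the corollary a fixed small factor below the threshold $\eta(\tilde C_0)$ produced by Theorem \ref{Thm:GP uniqueness} (with $\tilde C_0\lesssim C_0$) absorbs the factor of $2$ uniformly in $k$. Theorem \ref{Thm:GP uniqueness} then forces $\gamma^{(k)}\equiv 0$ on $[0,T_0]$ for every $k$.

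Finally I would translate $\gamma^{(1)}\equiv 0$ back to the NLS. In kernel form this reads $u_1(t,x)\overline{u_1(t,x')}=u_2(t,x)\overline{u_2(t,x')}$. Setting $x=x'$ gives $|u_1(t,\cdot)|=|u_2(t,\cdot)|$ a.e., while fixing $x'$ at a point where $u_2(t,x')\neq 0$ gives $u_1(t,\cdot)=\lambda(t)u_2(t,\cdot)$ with $|\lambda(t)|=1$; the weak continuity in time together with the shared datum yields $\lambda(0)=1$. Substituting $u_1=\lambda u_2$ into (\ref{NLS:T^4 cubic NLS}) and subtracting the equation for $u_2$ produces $i\lambda'(t)u_2=0$, so $\lambda\equiv 1$ and $u_1\equiv u_2$ on $[0,T_0]$; the case $u_0\equiv 0$ is trivial from the $H^1$ bound and the hierarchy conclusion at $k=1$.

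The main obstacle, and where I expect to spend the most care, is not the final NLS step but the book-keeping in the second paragraph: one must check that the signed $\Gamma$ genuinely fits into the framework used to prove Theorem \ref{Thm:GP uniqueness} (either as a signed de Finetti representation $\mu_t$, or by re-running the proof with $\gamma^{(k)}$ taken as a difference rather than nonnegative trace-class), and that the factor-of-two loss when passing from (1)–(2) on each $u_i$ to the hierarchy bounds on $\gamma^{(k)}$ can be absorbed uniformly in $k$ into the definition of the threshold $\eta(C_0)$, without destroying the dependence chain by which $\eta$ is ultimately determined by $C_0$ alone in Theorem \ref{Thm:GP uniqueness}.
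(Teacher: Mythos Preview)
Your approach is essentially the same as the paper's: lift the two NLS solutions to factorized hierarchy solutions and invoke Theorem~\ref{Thm:GP uniqueness}. The paper does not spell out a separate proof of the corollary, but the intended argument is exactly this route, as signalled by the discussion surrounding \eqref{eqn:difference of NLS solution}.

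One clarification that removes your ``main obstacle'': you should not attempt to verify conditions (a), (b), and the frequency bound on the \emph{signed} difference $\gamma^{(k)}$. Condition (b) is an operator inequality for nonnegative trace-class operators and has no clean meaning for the difference. Instead, apply Theorem~\ref{Thm:GP uniqueness} to the two admissible solutions $\Gamma_i=\{|u_i\rangle\langle u_i|^{\otimes k}\}$ separately: for factorized $\Gamma_i$ the trace in (b) equals $\|u_i(t)\|_{H^1}^{2k}\le C_0^{2k}$ and the frequency-localized trace equals $\|P_{>M}\langle\nabla\rangle u_i(t)\|_{L^2}^{2k}\le \eta^{2k}$, with no factor of~$2$ anywhere. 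Equivalently, in the language of the proof of Theorem~\ref{Thm:GP uniqueness}, the de~Finetti measures are $\delta_{u_i(t)}$ and the support property \eqref{set:spt of qdF Measure} is exactly the statement that each $u_i(t)\in S$, which is (1) and (2). Either way the threshold $\eta(C_0)$ transfers verbatim, and your worry about absorbing constants uniformly in $k$ disappears. Your final step recovering $u_1=u_2$ from $\gamma^{(1)}\equiv 0$ via the phase $\lambda(t)$ is a standard detail that the paper leaves implicit.
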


Corollary \ref{Cor:weak uniqueness regarding NLS}, an unclassified
uniqueness theorem, seems to be stronger than the unconditional uniqueness
theorem, Theorem \ref{THM:MainUniquenessTHM}, as it concludes uniqueness in
a larger class of solutions. We wonder if there could be a more detailed
classification regarding the word \textquotedblleft unconditional
uniqueness" at the critical regularity.

Theorem \ref{THM:MainUniquenessTHM}, follows from Theorem \ref{Thm:GP
uniqueness} and the following lemma.

\begin{lemma}
\label{Cor:UTFLforNLS}\footnote{%
The proof of Lemma \ref{Cor:UTFLforNLS} uses only compactness and is much
simpler than \cite[Theorem A.2]{CJInvent}.}$u$ is a $%
C_{[0,T_{0}]}^{0}H_{x}^{1}\cap \dot{C}_{[0,T_{0}]}^{1}H_{x}^{-1}$ solution
of (\ref{NLS:T^4 cubic NLS}) if and only if $u\ $is a $C_{[0,T_{0}]}^{0}H_{x,%
\mathnormal{weak}}^{1}\cap \dot{C}_{[0,T_{0}]}^{1}H_{x,\mathnormal{weak}%
}^{-1}$ solution and satisfies \emph{uniform in time frequency localization}
(UTFL), that is, for each $\varepsilon >0$ there exists $M(\varepsilon )$
such that 
\begin{equation}
\Vert \nabla P_{\geqslant M(\varepsilon )}u\Vert _{L_{[0,T_{0}]}^{\infty
}L_{x}^{2}}\leq \varepsilon  \label{E:UTFL01}
\end{equation}
\end{lemma}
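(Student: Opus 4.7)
The lemma asserts two implications. The forward direction (strong continuity $\Rightarrow$ weak continuity plus UTFL) will follow from the compactness of the continuous image $u([0,T_0]) \subset H^1_x$ together with a finite $\varepsilon$-net argument. The reverse direction is the substantive content: I will exploit the toral structure to reduce strong $H^1$ convergence on low frequencies to weak $H^1$ convergence, since on $\mathbb{T}^4$ the range of $P_{\leq M}$ is finite-dimensional. The continuity of $\partial_t u$ in $H^{-1}_x$ in both directions will be bootstrapped from the corresponding continuity of $u$ via the NLS equation \eqref{NLS:T^4 cubic NLS}, using the Sobolev embedding $H^1(\mathbb{T}^4) \hookrightarrow L^4(\mathbb{T}^4)$ and its dual $L^{4/3}(\mathbb{T}^4) \hookrightarrow H^{-1}(\mathbb{T}^4)$ to handle the cubic nonlinearity.

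\textbf{Forward direction.} Assume $u\in C^0_{[0,T_0]}H^1_x$. The image $K := u([0,T_0])$ is compact in $H^1_x$. Given $\varepsilon>0$, cover $K$ by finitely many $H^1_x$-balls of radius $\varepsilon/2$ centered at $u(t_1),\dots,u(t_N)$. Since each $u(t_i)\in H^1$, one can pick $M_i$ with $\|\nabla P_{>M_i}u(t_i)\|_{L^2_x}\leq \varepsilon/2$, and set $M(\varepsilon):=\max_i M_i$. For arbitrary $t\in[0,T_0]$, choose $t_i$ with $\|u(t)-u(t_i)\|_{H^1_x}\leq \varepsilon/2$; then
$$\|\nabla P_{>M(\varepsilon)}u(t)\|_{L^2_x}\leq \|\nabla P_{>M(\varepsilon)}(u(t)-u(t_i))\|_{L^2_x}+\|\nabla P_{>M(\varepsilon)}u(t_i)\|_{L^2_x}\leq \varepsilon,$$
which is \eqref{E:UTFL01}. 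The weak continuity of $u$ is trivial from the strong continuity, and the strong continuity of $\partial_t u$ in $H^{-1}_x$ follows from $i\partial_t u=-\Delta u\pm|u|^2u$, since $\Delta:H^1\to H^{-1}$ is bounded and $v\mapsto |v|^2v$ is continuous from $H^1(\mathbb{T}^4)$ to $L^{4/3}(\mathbb{T}^4)\hookrightarrow H^{-1}(\mathbb{T}^4)$ by Sobolev and Hölder.

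\textbf{Reverse direction.} Assume $u\in C^0_{[0,T_0]}H^1_{x,\mathnormal{weak}}$ satisfies UTFL, and fix $t_0\in[0,T_0]$. Given $\varepsilon>0$, pick $M=M(\varepsilon)$ as in UTFL. I decompose
$$u(t)-u(t_0)=P_{\leq M}(u(t)-u(t_0))+P_{>M}(u(t)-u(t_0)).$$
The high-frequency piece is controlled uniformly in $t$: using $M\geq 1$ and Bernstein, UTFL yields $\|P_{>M}(u(t)-u(t_0))\|_{H^1_x}\lesssim \varepsilon$. For the low-frequency piece, observe that on $\mathbb{T}^4$ the range of $P_{\leq M}$ is spanned by the finitely many exponentials $\{e^{i\xi\cdot x}:|\xi|\leq M\}$ and is therefore finite-dimensional. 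Weak convergence $u(t)\rightharpoonup u(t_0)$ in $H^1_x$ forces weak—and hence strong, since the subspace is finite-dimensional—convergence of $P_{\leq M}u(t)$ to $P_{\leq M}u(t_0)$. Thus $\|P_{\leq M}(u(t)-u(t_0))\|_{H^1_x}\to 0$ as $t\to t_0$, and combining the two bounds gives $\limsup_{t\to t_0}\|u(t)-u(t_0)\|_{H^1_x}\lesssim \varepsilon$. Since $\varepsilon$ is arbitrary, $u\in C^0_{[0,T_0]}H^1_x$; the strong continuity of $\partial_t u$ in $H^{-1}_x$ is then recovered from the equation as in the forward direction.

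\textbf{Main obstacle.} There is no genuinely hard step; the only point requiring care is the use of the toral structure, which turns the low-frequency truncation $P_{\leq M}$ into a projection onto a finite-dimensional space, allowing weak $H^1$ convergence to upgrade to strong convergence there. This is precisely where the argument would break on $\mathbb{R}^4$ (one would need additional spatial tightness), and it reflects why the lemma is specifically formulated for $\mathbb{T}^4$.
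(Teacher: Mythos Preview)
Your proof is correct and, in both directions, takes a more streamlined route than the paper's argument.

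For the forward direction, the paper first uses the NLS equation to show that $t\mapsto \|\nabla P_{\leq M}u(t)\|_{L^2}^2$ is locally Lipschitz with modulus depending only on $M$ and $\|u\|_{L^\infty_t H^1_x}$, then combines this with the strong continuity of $\|\nabla u(t)\|_{L^2}^2$ to control $\|\nabla P_{>M}u(t)\|_{L^2}^2$ near each fixed time, and finally extracts a finite subcover. Your argument bypasses the equation entirely: compactness of $u([0,T_0])$ in $H^1_x$ plus a finite $\varepsilon$-net gives UTFL directly. This is shorter and makes clear that the forward implication is a general fact about $C^0$ curves in $H^1$ and has nothing to do with the NLS dynamics.

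For the reverse direction, the paper argues by contradiction: assuming failure of strong continuity, it produces dual elements $\phi_k\in H^{-1}$ witnessing the gap, then uses Rellich--Kondrachov to extract a strongly convergent subsequence of $P_{\leq M}\phi_k$ in $H^{-1}$, leading to a contradiction with weak continuity. You instead observe directly that on $\mathbb{T}^4$ the range of $P_{\leq M}$ is finite-dimensional, so weak $H^1$ convergence of $u(t)$ forces strong convergence of $P_{\leq M}u(t)$; UTFL handles the tail. Your argument and the paper's are dual to one another (finite-dimensionality of $P_{\leq M}H^1$ versus compactness of $P_{\leq M}:H^{-1}\to H^{-1}$), but yours is more transparent and avoids the contradiction setup. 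Both approaches genuinely rely on the torus structure, as you correctly note.
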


\begin{proof}
Postponed to $\S \ref{Sec:NLS UTFL}$. We remark that (\ref{E:UTFL01})
implies (\ref{eqn:utfl weak}), but the converse is not true. That is,
Corollary \ref{Cor:weak uniqueness regarding NLS} implies Theorem \ref%
{THM:MainUniquenessTHM}, the unconditional uniqueness theorem, but the type
of uniqueness concluded in Corollary \ref{Cor:weak uniqueness regarding NLS}
and also Theorem \ref{Thm:GP uniqueness} is unclassified.
\end{proof}

We can start the proof of Theorem \ref{Thm:GP uniqueness} now. We set up
some notations first. We rewrite (\ref{Hierarchy:T^4 cubic GP}) in Duhamel
form 
\begin{equation}
\gamma ^{(k)}(t_{k})=U^{(k)}(t_{k})\gamma _{0}^{(k)}\mp
i\int_{0}^{t_{k}}U^{(k)}(t_{k}-t_{k+1})B^{(k+1)}\left( \gamma
^{(k+1)}(t_{k+1})\right) dt_{k+1}  \label{hierarchy:GP in integral form}
\end{equation}%
where $U^{(k)}(t)=\prod\limits_{j=1}^{k}e^{it\left( \Delta _{x_{j}}-\Delta
_{x_{j}^{\prime }}\right) }$ and 
\begin{eqnarray*}
B^{(k+1)}\left( \gamma ^{(k+1)}\right) &\equiv
&\sum_{j=1}^{k}B_{j,k+1}\left( \gamma ^{(k+1)}\right) \\
&\equiv &\sum_{j=1}^{k}(B_{j,k+1}^{+}-B_{j,k+1}^{-})\left( \gamma
^{(k+1)}\right) \\
&\equiv &\sum_{j=1}^{k}\limfunc{Tr}\nolimits_{k+1}\delta
(x_{j}-x_{k+1})\gamma ^{(k+1)}-\gamma ^{(k+1)}\delta (x_{j}-x_{k+1}).
\end{eqnarray*}%
\newline
In the above, products are interpreted as the compositions of operators. For
example, in kernels, 
\begin{eqnarray*}
&&\left( \limfunc{Tr}\nolimits_{k+1}\delta (x_{1}-x_{k+1})\gamma
^{(k+1)}\right) \left( \mathbf{x}_{k},\mathbf{x}_{k}^{\prime }\right) \\
&=&\int \delta (x_{1}-x_{k+1})\gamma ^{(k+1)}(\mathbf{x}_{k},x_{k+1};\mathbf{%
\ x}_{k}^{\prime },x_{k+1})dx_{k+1}.
\end{eqnarray*}

We will prove that if $\Gamma _{1}=\left\{ \gamma _{1}^{(k)}\right\} $ and $%
\Gamma _{2}=\left\{ \gamma _{2}^{(k)}\right\} $ are two solutions to (\ref%
{hierarchy:GP in integral form}), subject to the same initial datum and (a)
- (b) in Theorem \ref{Thm:GP uniqueness}, then $\Gamma =\left\{ \gamma
^{(k)}=\gamma _{1}^{(k)}-\gamma _{2}^{(k)}\right\} $ is identically zero.
Note that, because (\ref{hierarchy:GP in integral form}) is linear, $\Gamma $
is a solution to (\ref{hierarchy:GP in integral form}). We will start using
a representation of $\Gamma $ given by the quantum de Finette theorem /
Theorem \ref{Thm:qdF}. To this end, we hereby define admissibility.

\begin{definition}[\protect\cite{CHPS}]
\label{Def:Admissible}A nonnegative trace class symmetric operators sequence 
$\Gamma =\left\{ \gamma ^{(k)}\right\} \in \oplus _{k\geqslant 1}C\left( %
\left[ 0,T\right] ,\mathcal{L}_{k}^{1}\right) $, is called admissible if for
all $k$, one has 
\begin{equation*}
\limfunc{Tr}\gamma ^{(k)}=1\text{, }\gamma ^{(k)}=\limfunc{Tr}%
\nolimits_{k+1}\gamma ^{(k+1)}.
\end{equation*}%
Here, a trace class operator is called symmetry, if, written in kernel form, 
\begin{eqnarray*}
\gamma ^{(k)}(\mathbf{x}_{k};\mathbf{x}_{k}^{\prime }) &=&\overline{\gamma
^{(k)}(\mathbf{x}_{k}^{\prime };\mathbf{x}_{k}),} \\
\gamma ^{(k)}(x_{1},...,x_{k};x_{1}^{\prime },...,x_{k}^{\prime }) &=&\gamma
^{(k)}(x_{\sigma \left( 1\right) },...,x_{\sigma \left( k\right) };x_{\sigma
\left( 1\right) }^{\prime },...,x_{\sigma \left( k\right) }^{\prime }),
\end{eqnarray*}%
for all $\sigma \in S_{k}$, the permutation group of $k$ elements.
\end{definition}

\begin{theorem}[quantum de Finette Theorem \protect\cite{CHPS,LMR}]
\label{Thm:qdF}Under assumption (a), there exists a probability measure $%
d\mu _{t}(\phi )$ supported on the unit sphere of $L^{2}(\mathbb{T}^{4})$
such that 
\begin{equation*}
\gamma ^{(k)}(t)=\int \left\vert \phi \right\rangle \left\langle \phi
\right\vert ^{\otimes k}d\mu _{t}(\phi )\text{.}
\end{equation*}
\end{theorem}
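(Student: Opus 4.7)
The plan is to derive Theorem \ref{Thm:qdF} from the abstract bosonic quantum de Finetti theorem of Hudson--Moody / St\o rmer, which in the form we need is precisely the result reformulated in \cite{LMR}. The first observation is that the admissibility conditions in Definition \ref{Def:Admissible} (symmetry, normalization $\mathrm{Tr}\,\gamma^{(k)}=1$, and the consistency $\gamma^{(k)}=\mathrm{Tr}_{k+1}\gamma^{(k+1)}$) are exactly the compatibility conditions needed to assemble, at each fixed $t$, the sequence $\{\gamma^{(k)}(t)\}_{k\geqslant 1}$ into a single symmetric tracial state $\omega_t$ on the infinite (symmetric) tensor product C*-algebra built on $L^2(\mathbb{T}^4)$. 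The $\gamma^{(k)}(t)$ are then recovered as the $k$-particle marginals of $\omega_t$.

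Second, I would invoke the abstract theorem: every such symmetric state admits a (unique) integral decomposition as a convex combination of pure product states $|\phi\rangle\langle\phi|^{\otimes\infty}$ with $\phi$ ranging over the unit sphere of $L^2(\mathbb{T}^4)$. Passing to the $k$-particle marginal gives the desired formula
\[
\gamma^{(k)}(t)=\int |\phi\rangle\langle\phi|^{\otimes k}\,d\mu_t(\phi).
\]
The support on the unit sphere (rather than the unit ball) is a direct consequence of the normalization $\mathrm{Tr}\,\gamma^{(1)}(t)=1$.

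An alternative, more quantitative route proceeds by finite-dimensional approximation: project onto a finite-dimensional subspace $H_N\subset L^2(\mathbb{T}^4)$ and invoke the Christandl--K\"onig--Mitchison--Renner finite quantum de Finetti theorem to produce, for each $N$, a measure $d\mu_t^{(N)}$ on the unit sphere of $H_N$ approximating $\gamma^{(k)}(t)$ with trace-norm error $O(k^2\dim H_N/n)$ after considering the $n$-particle state for large $n$; then pass to $N\to\infty$ using weak-$*$ compactness on the space of probability measures on the unit sphere of $L^2(\mathbb{T}^4)$. This is essentially the Lewin--Nam--Rougerie argument. The measurability in $t$ of the resulting $d\mu_t$, needed for later time integration, follows from the trace-norm continuity of $\gamma^{(k)}(t)$ in $t$ combined with the same compactness argument applied jointly in $t$.

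The main obstacle in either route is not conceptual but machinery-heavy: in the Hudson--Moody direction, constructing the extension of $\{\gamma^{(k)}(t)\}$ to a genuine state on the infinite symmetric tensor product and identifying extreme symmetric states with pure product states is nontrivial operator-algebraic input; in the finite-dimensional direction, the delicate points are lifting measures from the projective space (where the decomposition is truly canonical, due to the phase ambiguity $\phi\mapsto e^{i\theta}\phi$) to the unit sphere and ensuring joint measurability in $t$. Because all of this is already carried out in \cite{LMR,CHPS}, at this level of the paper I would simply cite those works rather than reproduce the decomposition argument.
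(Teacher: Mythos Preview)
Your proposal is correct and matches the paper's treatment exactly: the paper does not prove Theorem \ref{Thm:qdF} but simply cites it as the quantum de Finetti theorem from \cite{CHPS,LMR}, which is precisely what you conclude after sketching the two standard routes. Your outline of the Hudson--Moody/St\o rmer and Lewin--Nam--Rougerie approaches is accurate background, but as you say, at this level the result is quoted rather than reproved.
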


By Theorem \ref{Thm:qdF}, $\exists d\mu _{1,t}$ and $d\mu _{2,t}$
representing the two solutions $\Gamma _{1}$ and $\Gamma _{2}$. The same
Chebyshev argument as in \cite[Lemma 4.5]{CHPS} turns the assumptions in
Theorem \ref{Thm:GP uniqueness} to the property that $d\mu _{j,t}$ is
supported in the set 
\begin{equation}
S=\{\phi \in \mathbb{S}\left( L^{2}(\mathbb{T}^{4})\right) :\left\Vert
P_{>M}\left\langle \nabla \right\rangle \phi \right\Vert _{L^{2}}\leqslant
\varepsilon \}\cap \{\phi \in \mathbb{S}\left( L^{2}(\mathbb{T}^{4})\right)
:\left\Vert \phi \right\Vert _{H^{1}}\leqslant C_{0}\}.
\label{set:spt of qdF Measure}
\end{equation}%
That is, let the signed measure $d\mu _{t}=d\mu _{1,t}-d\mu _{2,t}$, we have 
\begin{equation}
\gamma ^{(k)}(t_{k})=\left( \gamma _{1}^{(k)}-\gamma _{2}^{(k)}\right)
(t_{k})=\int \left\vert \phi \right\rangle \left\langle \phi \right\vert
^{\otimes k}d\mu _{t_{k}}(\phi )
\label{eqn:qdF Rep for zero initial solution}
\end{equation}%
and $d\mu _{t_{k}}$ is supported in the set $S$ defined in (\ref{set:spt of
qdF Measure}).

So our task of establishing Theorem \ref{Thm:GP uniqueness} is now
transformed into proving the solution is zero if the solution takes the form
(\ref{eqn:qdF Rep for zero initial solution}) and is subject to zero initial
datum. It suffices to prove $\gamma ^{(1)}=0$ as the proof is the same for
the general $k$ case. The proof involves coupling (\ref{hierarchy:GP in
integral form}) multiple times. To this end, we plug in zero initial datum,
set the ``$\mp i$" in (\ref{hierarchy:GP in integral form}) to be $1$ so
that we do not need track its power and rewrite (\ref{hierarchy:GP in
integral form}) as 
\begin{equation}
\gamma ^{(k)}(t_{k})=\int_{0}^{t_{k}}U^{(k)}(t_{k}-t_{k+1})B^{(k+1)}\left(
\gamma ^{(k+1)}(t_{k+1})\right) dt_{k+1}
\label{hierarchy:GP in integral form (used)}
\end{equation}

Define 
\begin{eqnarray*}
&&J^{(k+1)}(f^{(k+1)})(t_{1},\underline{t}_{k+1}) \\
&=&U^{(1)}(t_{1}-t_{2})B^{(2)}U^{(2)}(t_{2}-t_{3})B^{(3)}...U^{(k)}(t_{k}-t_{k+1})B^{(k+1)}f^{(k+1)}(t_{k+1})
\end{eqnarray*}%
with $\underline{t}_{k+1}=\left( t_{2},t_{3},...,t_{k},t_{k+1}\right) $. We
can then write 
\begin{equation*}
\gamma
^{(1)}(t_{1})=\int_{0}^{t_{1}}\int_{0}^{t_{2}}...\int_{0}^{t_{k}}J^{(k+1)}(%
\gamma ^{(k+1)})(t_{1},\underline{t}_{k+1})d\underline{t}_{k+1}\text{,}
\end{equation*}%
after iterating (\ref{hierarchy:GP in integral form (used)}) $k$ times. To
estimate $\gamma ^{(1)}$, we first use the Klainerman-Machedon board game%
\footnote{%
As mentioned before, we actually need an extended Klainerman-Machedon board
game, we do so in \S \ref{Sec:ElaboratedKMBoardGame}.} to reduce the number
of summands inside $\gamma ^{(1)},$ which is $k!2^{k}$ at the moment, by
combining them.

\begin{lemma}[Klainerman-Machedon board game \protect\cite{KM}]
\label{lemma:Klainerman-MachedonBoardGame}One can express 
\begin{equation*}
\int_{0}^{t_{1}}\int_{0}^{t_{2}}...%
\int_{0}^{t_{k+1}}J^{(k+1)}(f^{(k+1)})(t_{1},\underline{t}_{k+1})d\underline{%
t}_{k+1}
\end{equation*}%
as a sum of at most $4^{k}$ terms of the form 
\begin{equation*}
\int_{D_{m}}J_{\mu _{m}}^{(k+1)}(f^{(k+1)})(t_{1},\underline{t}_{k+1})d%
\underline{t}_{k+1},
\end{equation*}%
or in other words, 
\begin{equation}
\int_{0}^{t_{1}}\int_{0}^{t_{2}}...%
\int_{0}^{t_{k+1}}J^{(k+1)}(f^{(k+1)})(t_{1},\underline{t}_{k+1})d\underline{%
t}_{k+1}=\sum_{m}\int_{D_{m}}J_{\mu _{m}}^{(k+1)}(f^{(k+1)})(t_{1},%
\underline{t}_{k+1})d\underline{t}_{k+1}.  \label{eqn:KMBoardGameSum}
\end{equation}%
Here, $D_{m}$ is a subset of $[0,t_{1}]^{k}$, depending on $\mu _{m}$; $%
\left\{ \mu _{m}\right\} $ are a set of maps from $\{2,\ldots ,k+1\}$ to $%
\{1,\ldots ,k\}$ satisfying $\mu _{m}(2)=1$ and $\mu _{m}(l)<l$ for all $l,$
and 
\begin{eqnarray*}
J_{\mu _{m}}^{(k+1)}(f^{(k+1)})(t_{1},\underline{t}_{k+1})
&=&U^{(1)}(t_{1}-t_{2})B_{1,2}U^{(2)}(t_{2}-t_{3})B_{\mu _{m}(3),3}\cdots \\
&&\cdots U^{(k)}(t_{k}-t_{k+1})B_{\mu _{m}(k+1),k+1}(f^{(k+1)})(t_{1}).
\end{eqnarray*}
\end{lemma}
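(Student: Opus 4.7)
The plan is to execute the classical Klainerman--Machedon board-game argument, which organizes the $k!$ summands produced by the full expansion of the iterated Duhamel integral into at most $4^{k}$ equivalence classes by exploiting the permutation symmetry of $f^{(k+1)}$ inherited from admissibility. First I would expand each coupling operator $B^{(l)}=\sum_{j=1}^{l-1}B_{j,l}$ at the $k$ couplings, obtaining
\[
J^{(k+1)}(f^{(k+1)})(t_{1},\underline{t}_{k+1}) = \sum_{\mu} J_{\mu}^{(k+1)}(f^{(k+1)})(t_{1},\underline{t}_{k+1}),
\]
where $\mu$ runs over all maps $\{2,\ldots,k+1\}\to\{1,\ldots,k\}$ satisfying $\mu(2)=1$ and $\mu(l)<l$ for all $l$. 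The number of such $\mu$ is exactly $k!$, and each summand comes naturally paired with the ordered simplex $\{0\le t_{k+1}\le\cdots\le t_{2}\le t_{1}\}$. The differences $B_{j,l}=B_{j,l}^{+}-B_{j,l}^{-}$ are left unexpanded so that the sharper $4^{k}$ count in the statement can be attained.

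Second, I would introduce the acceptable moves and the equivalence relation they generate on the set of such maps. Using the full $S_{k+1}$-symmetry of $f^{(k+1)}$, one may permute the dummy integration labels $(x_{j},x_{j}^{\prime})$ for $j\ge 2$; at the price of a matching permutation of the time variables $t_{j}$, this relabels $\mu$ into an equivalent $\mu^{\prime}$ while moving the sub-simplex of integration to a permuted one. Declaring $\mu\sim\mu^{\prime}$ whenever such a relabeling is possible, I would then single out within each class the unique upper-echelon representative $\mu_{m}$, characterized by the greedy rule that $\mu_{m}(l)$ is taken as small as possible at each level $l$ subject to the constraints already imposed by $\mu_{m}(2),\ldots,\mu_{m}(l-1)$. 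The bound $4^{k}$ on the number of upper-echelon classes then follows from a standard Catalan-number count.

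Finally, for each class I would collect all sub-simplexes of $[0,t_{1}]^{k}$ obtained from its members into a single domain $D_{m}$, yielding the rearrangement \eqref{eqn:KMBoardGameSum}. The main obstacle, and the most delicate bookkeeping in this proof, is verifying that the acceptable moves truly define an equivalence relation whose classes partition the original $k!$ summands, so that the resulting domains $D_{m}$ are pairwise disjoint and together tile the full $k$-simplex. Once this is settled, the identity is immediate. Notably, one does \emph{not} need to produce an explicit description of each $D_{m}$ here; that precise graphical/algorithmic description is exactly what Section~\ref{Sec:ElaboratedKMBoardGame} will develop from scratch in order to make the time-domain compatible with the $U$-$V$ trilinear estimates.
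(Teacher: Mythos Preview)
Your outline is correct and follows the classical Klainerman--Machedon argument from \cite{KM}, which is essentially the approach the paper takes as well. The paper's treatment in \S\ref{Sec:Find Integration Limits} recasts the same argument via a binary-tree encoding (Algorithm~\ref{alg:u to tree}): each $\mu$ becomes a labeled admissible tree, two $\mu$'s are KM-relatable if and only if their trees share the same skeleton, and the $4^{k}$ bound is then the Catalan count of skeletons on $k$ nodes. One small correction: your description of the upper-echelon representative as ``$\mu_{m}(l)$ is taken as small as possible at each level'' is not the right characterization; upper echelon simply means the output sequence is non-decreasing, $\mu_{m}(j)\le\mu_{m}(j+1)$ for all $j$, and it is this monotonicity (not minimality) that makes the representative unique within each class.
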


Using Lemma \ref{lemma:Klainerman-MachedonBoardGame}, to estimate $\gamma
^{(1)}$, it suffices to deal with a summand in the right hand side of (\ref%
{eqn:KMBoardGameSum}) 
\begin{equation*}
\int_{D_{m}}J_{\mu _{m}}^{(k+1)}(\gamma ^{(k+1)})(t_{1},\underline{t}_{k+1})d%
\underline{t}_{k+1}
\end{equation*}%
at the expense of a $4^{k}$. Since $B_{j,k+1}=B_{j,k+1}^{+}-B_{j,k+1}^{-}$, $%
J_{\mu _{m}}^{(k+1)}(\gamma ^{(k+1)})$ is but another sum. Thus, by paying
an extra $2^{k}$, we can just estimate a typical term 
\begin{equation}
\int_{D_{m}}J_{\mu _{m},sgn}^{(k+1)}(\gamma ^{(k+1)})(t_{1},\underline{t}%
_{k+1})d\underline{t}_{k+1}  \label{eqn:a typical term in uniqueness}
\end{equation}%
where 
\begin{eqnarray}
J_{\mu _{m},sgn}^{(k+1)}(f^{(k+1)})(t_{1},\underline{t}_{k+1})
&=&U^{(1)}(t_{1}-t_{2})B_{1,2}^{sgn(2)}U^{(2)}(t_{2}-t_{3})B_{\mu
_{m}(3),3}^{sgn(3)}\cdots  \label{E:JSgnDef} \\
&&\cdots U^{(k)}(t_{k}-t_{k+1})B_{\mu
_{m}(k+1),k+1}^{sgn(k+1)}(f^{(k+1)})(t_{k+1}).  \notag
\end{eqnarray}%
with $sgn$ meaning the signature array $(sgn(2),...,sgn(k+1))$ and $%
B_{j,k+1}^{sgn(k+1)}$ stands for $B_{j,k+1}^{+}$ or $B_{j,k+1}^{-}$
depending on the sign of the $(k+1)$-th signature element. The estimate of ( %
\ref{eqn:a typical term in uniqueness}) is given by the following
proposition.

\begin{proposition}
\label{Prop:KeyUniquenessEstimate} 
\begin{eqnarray*}
&&\left\Vert \left\langle \nabla _{x_{1}}\right\rangle ^{-1}\left\langle
\nabla _{x_{1}^{\prime }}\right\rangle ^{-1}\int_{D_{m}}J_{\mu
_{m},sgn}^{(k+1)}(\gamma ^{(k+1)})(t_{1},\underline{t}_{k+1})d\underline{t}
_{k+1}\right\Vert _{L_{t_{1}}^{\infty }L_{x_{1},x_{1}^{\prime }}^{2}} \\
&\leqslant &2TC_{0}^{2}\left( CC_{0}^{3}T^{\frac{1}{7}}M_{0}^{\frac{3}{5}
}+CC_{0}^{2}\varepsilon \right) ^{\frac{2}{3}k}
\end{eqnarray*}
\end{proposition}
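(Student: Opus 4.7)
The plan is to reduce (\ref{eqn:a typical term in uniqueness}) to an expression that factorizes over copies of a single function $\phi$, then apply the $U$-$V$ trilinear estimates layer by layer, with the time domain $D_{m}$ handled by the extended Klainerman-Machedon board game of \S\ref{Sec:ElaboratedKMBoardGame}. The four inputs that must cooperate are: (i) the quantum de Finetti representation (\ref{eqn:qdF Rep for zero initial solution}) with support property (\ref{set:spt of qdF Measure}); (ii) the explicit iterated-integral form of $D_{m}$ from \S\ref{Sec:ElaboratedKMBoardGame}; (iii) the trilinear estimates (\ref{eqn:H-1-T4TrilinearEstimateWithGain}) and (\ref{eqn:H1-T4TrilinearEstimateWithGain}); and (iv) the HUFL bound $\|P_{>M}\langle\nabla\rangle\phi\|_{L^{2}}\leq\varepsilon$ available on the support $S$ of $d\mu_{t}$.

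First I would substitute $\gamma^{(k+1)}(t_{k+1})=\int|\phi\rangle\langle\phi|^{\otimes(k+1)}\,d\mu_{t_{k+1}}(\phi)$ and swap the $d\mu$ integral past $J_{\mu_{m},sgn}^{(k+1)}$. Because each $B_{j,l}^{\pm}$ contracts the $l$th slot of $|\phi\rangle\langle\phi|^{\otimes l}$ with the $j$th, iterating $B$ and $U^{(l)}$ on a factorized state produces a product of free evolutions $e^{it\Delta}\phi$ and nested Duhamel trilinear terms of type $\int e^{i(t-s)\Delta}(u_{1}u_{2}u_{3})\,ds$. Applying $\langle\nabla_{x_{1}}\rangle^{-1}\langle\nabla_{x_{1}'}\rangle^{-1}$ and taking the $L^{\infty}_{t_{1}}L^{2}_{x_{1},x_{1}'}$ norm allows (\ref{E:H^1/2-embedding}) and (\ref{eqn:dual definition of Duhamel X^s norm}) to dualize the outermost factor into an $X^{-1}$ norm on one kernel slot and an $X^{1}$ norm on the conjugate slot.

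Second, I would invoke the extended Klainerman-Machedon board game from \S\ref{Sec:ElaboratedKMBoardGame} to rewrite $\int_{D_{m}}$ as a single well-ordered iterated integral whose innermost limit has exactly the form $\int_{a}^{t}$ required by (\ref{eqn:H-1-T4TrilinearEstimateWithGain})--(\ref{eqn:H1-T4TrilinearEstimateWithoutGain}). Then I would estimate from the innermost Duhamel layer outward: at each trilinear cluster, split the designated factor as $\mathrm{Id}=P_{\leq M_{0}}+P_{>M_{0}}$ and apply (\ref{eqn:H-1-T4TrilinearEstimateWithGain}) or its $X^{1}$ analogue (\ref{eqn:H1-T4TrilinearEstimateWithGain}). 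On $P_{\leq M_{0}}$ the low-frequency bound gives $T^{1/7}M_{0}^{3/5}\|P_{\leq M_{0}}\phi\|_{H^{1}}\leq C_{0}T^{1/7}M_{0}^{3/5}$, while on $P_{>M_{0}}$ the HUFL support of $d\mu_{t_{k+1}}$ in $S$ contributes $\|P_{>M_{0}}\phi\|_{H^{1}}\leq\varepsilon$. The remaining two factors in each trilinear are estimated by $\|e^{it\Delta}\phi\|_{Y^{1}}\lesssim\|\phi\|_{H^{1}}\leq C_{0}$ via (\ref{eqn:v norm applied to free solution}).

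Finally, I would count. The extended board game arranges the $k$ Born operators into roughly $2k/3$ trilinear clusters after the $U$-$V$ estimates are chained, each cluster contributing one factor of $\bigl(CC_{0}^{3}T^{1/7}M_{0}^{3/5}+CC_{0}^{2}\varepsilon\bigr)$; the outermost time integration contributes a $T$ and the two free evolutions of $x_{1},x_{1}'$ contribute a remaining $C_{0}^{2}$, yielding the stated bound. The step I expect to be the main obstacle is verifying the compatibility in Step 3: each layer of trilinear estimation must inherit from the reference tamed class of \S\ref{Sec:ElaboratedKMBoardGame} a valid integration interval $[a,t]$ so that (\ref{eqn:H-1-T4TrilinearEstimateWithGain}) applies cleanly, and the low/high splitting must propagate through the iteration without destroying the $2k/3$ counting. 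Ensuring this compatibility is precisely the purpose of rebuilding the board game from scratch in \S\ref{Sec:ElaboratedKMBoardGame}, and its correct deployment here is the crux of the argument.
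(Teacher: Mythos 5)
Your overall strategy coincides with the paper's: plug in the de Finetti representation, use the extended board game's integration limits so that the $x_{1}$ and $x_{1}^{\prime }$ sides factor, and chain the $U$-$V$ trilinear estimates from the innermost Duhamel layer outward, harvesting the factor $T^{\frac{1}{7}}M_{0}^{\frac{3}{5}}C_{0}+\varepsilon $ from the low/high splitting together with the support property (\ref{set:spt of qdF Measure}). However, there is a genuine gap exactly where the exponent $\frac{2}{3}k$ should come from. You assert that the board game ``arranges the $k$ Born operators into roughly $2k/3$ trilinear clusters'' and that at each cluster one may split a designated factor and bound its $P_{>M_{0}}$ part by $\varepsilon $. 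In fact every one of the $k$ couplings is a trilinear cluster, and the gain estimates (\ref{estimate:Duhamel mlinear-1})--(\ref{estimate:Duhamel mlinear-2}) produce a small factor only when that coupling contracts at least one free evolution $e^{it\Delta }\phi $: only then can the $X^{1}$ norm of that factor be replaced by $\Vert \phi \Vert _{H^{1}}$, so that $\Vert P_{>M_{0}}\phi \Vert _{H^{1}}\leqslant \varepsilon $ is available from the measure's support. At a congested coupling all three factors are themselves Duhamel terms, no factor can be usefully split, and one must fall back on the no-gain estimates (\ref{estimate:Duhamel mlinear-3})--(\ref{estimate:Duhamel mlinear-4}). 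The count of gain-bearing couplings is therefore not supplied by the board game at all, but by a separate combinatorial argument (Lemma \ref{Lem:unclogged couplings}): there are $2k-1$ copies of $U\phi $ to be consumed, an unclogged coupling consumes at most three of them, and congested couplings consume none, so fewer than $k/3$ couplings can be congested and at least $\frac{2}{3}k$ gains survive. Without this counting step your bound $(\cdot )^{\frac{2}{3}k}$ is unjustified.

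A second, smaller omission: the innermost collapsed term $\mathcal{C}_{R}^{(k+1)}=\left\vert \phi \right\vert ^{2}\phi $ is evaluated at the fixed time $t_{k+1}$ and carries no propagator integral (the $dt_{k+1}$ integral is the outermost one and is not used for smoothing), so it cannot be absorbed into any trilinear Duhamel estimate; it must be placed in $H^{-1}$ by the 4D Sobolev inequality (\ref{Sobolev:4D}), $\left\Vert \left\vert \phi \right\vert ^{2}\phi \right\Vert _{H^{-1}}\lesssim \left\Vert \phi \right\Vert _{H^{1}}^{3}$, which accounts for part of the powers of $C_{0}$ and of the constant $C$ in the stated bound, while the prefactor $2T$ arises from $\int_{0}^{T}dt_{k+1}\int d\left\vert \mu _{t_{k+1}}\right\vert \leqslant 2T$ for the signed measure. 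Both of these steps need to be made explicit for the claimed inequality to follow.
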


\begin{proof}
See \S \ref{Sec:GP Uniqueness-2}.
\end{proof}

Once Proposition \ref{Prop:KeyUniquenessEstimate} is proved, Theorem \ref%
{Thm:GP uniqueness} then follows. In fact, 
\begin{eqnarray*}
&&\left\Vert \left\langle \nabla _{x_{1}}\right\rangle ^{-1}\left\langle
\nabla _{x_{1}^{\prime }}\right\rangle ^{-1}\gamma ^{(1)}(t_{1})\right\Vert
_{L_{t_{1}}^{\infty }L_{x_{1},x_{1}^{\prime }}^{2}} \\
&\leqslant &4^{k}\left\Vert \left\langle \nabla _{x_{1}}\right\rangle
^{-1}\left\langle \nabla _{x_{1}^{\prime }}\right\rangle
^{-1}\int_{D_{m}}J_{\mu _{m}}^{(k+1)}(\gamma ^{(k+1)})(t_{1},\underline{t}%
_{k+1})d\underline{t}_{k+1}\right\Vert _{L_{t_{1}}^{\infty
}L_{x_{1},x_{1}^{\prime }}^{2}} \\
&\leqslant &8^{k}\left\Vert \left\langle \nabla _{x_{1}}\right\rangle
^{-1}\left\langle \nabla _{x_{1}^{\prime }}\right\rangle
^{-1}\int_{D_{m}}J_{\mu _{m},sgn}^{(k+1)}(\gamma ^{(k+1)})(t_{1},\underline{t%
}_{k+1})d\underline{t}_{k+1}\right\Vert _{L_{t_{1}}^{\infty
}L_{x_{1},x_{1}^{\prime }}^{2}} \\
&\leqslant &2TC_{0}^{2}\left( CC_{0}^{3}T^{\frac{1}{7}}M_{0}^{\frac{3}{5}%
}+CC_{0}^{2}\varepsilon \right) ^{\frac{2}{3}k}
\end{eqnarray*}%
Select $\varepsilon $ small enough (The threshold $\eta $ is also determined
here.) so that $CC_{0}^{2}\varepsilon <\frac{1}{4}$ and then select $T$
small enough so that $CC_{0}^{3}T^{\frac{1}{7}}M_{0}^{\frac{3}{5}}$ $<\frac{1%
}{4}$, we then have 
\begin{equation*}
\left\Vert \left\langle \nabla _{x_{1}}\right\rangle ^{-1}\left\langle
\nabla _{x_{1}^{\prime }}\right\rangle ^{-1}\gamma ^{(1)}(t_{1})\right\Vert
_{L_{t_{1}}^{\infty }L_{x_{1},x_{1}^{\prime }}^{2}}\leqslant \left( \frac{1}{%
2}\right) ^{k}\rightarrow 0\text{ as }k\rightarrow \infty \text{.}
\end{equation*}%
We can then bootstrap to fill the whole $\left[ 0,T_{0}\right] $ interval as 
$M$ applies uniformly on $[0,T_{0}].$

Before moving into the proof of Proposition \ref{Prop:KeyUniquenessEstimate}%
, we remark that the extra $2T$ does not imply the estimate is critical or
subcritical, this $T$ actually appears only once. Such a $T$ is due to the
GP hierarchy method instead of scaling because the $dt_{k+1}$ time integral
is not used for any Strichartz type estimates. This one factor of $T$
appeared in the other energy-critical $\mathbb{T}^{3}$ quintic case \cite%
{CJInvent} as well.

\subsection{Proof of Lemma \protect\ref{Cor:UTFLforNLS} / UTFL for NLS\label%
{Sec:NLS UTFL}}

By substituting the equation, we compute 
\begin{align*}
\left\vert \partial _{t}\Vert \nabla P_{\leq M}u\Vert
_{L_{x}^{2}}^{2}\right\vert & =2\left\vert \Im \int P_{\leq M}\nabla u\cdot
P_{\leq M}\nabla (|u|^{2}u)\,dx\right\vert \\
& \leqslant 2\Vert P_{\leq M}\nabla u\Vert _{L^{4}}\Vert P_{\leq M}\nabla
(|u|^{2}u)\Vert _{L^{4/3}} \\
& =2M^{2}\Vert \tilde{P}_{\leq M}u\Vert _{L^{4}}\Vert \tilde{P}_{\leq
M}(|u|^{2}u)\Vert _{L^{4/3}}
\end{align*}%
where, if the symbol associated to $P_{\leq M}$ is $\chi (\xi /M)$, then the
symbol associated to $\tilde{P}_{\leq M}$ is $\tilde{\chi}(\xi /M)$, with $%
\tilde{\chi}(\xi )=\xi \chi (\xi )$. By the $L^{p}\rightarrow L^{p}$
boundedness of the Littlewood-Paley projections (see for example, \cite[
Appendix]{HS2}), 
\begin{equation*}
\left\vert \partial _{t}\Vert \nabla P_{\leq M}u\Vert
_{L_{x}^{2}}^{2}\right\vert \lesssim M^{2}\Vert u\Vert _{L^{4}}^{4}
\end{equation*}%
By Sobolev embedding 
\begin{equation*}
\left\vert \partial _{t}\Vert \nabla P_{\leq M}u\Vert
_{L_{x}^{2}}^{2}\right\vert \lesssim M^{2}\Vert u\Vert _{H^{1}}^{4}
\end{equation*}%
Hence there exists $\delta ^{\prime }>0$ (depending on $M$, $\Vert u\Vert
_{L_{[0,T]}^{\infty }H^{1}}$, and $\varepsilon $) such that for any $%
t_{0}\in \lbrack 0,T]$, it holds that for any $t\in (t_{0}-\delta ^{\prime
},t_{0}+\delta ^{\prime })\cap \lbrack 0,T]$, 
\begin{equation}
\left\vert \Vert \nabla P_{\leq M}u(t)\Vert _{L_{x}^{2}}^{2}-\Vert \nabla
P_{\leq M}u(t_{0})\Vert _{L_{x}^{2}}^{2}\right\vert \leq \tfrac{1}{16}%
\varepsilon ^{2}  \label{E:A}
\end{equation}%
On the other hand, since $u\in C_{[0,T]}^{0}H_{x}^{1}$, for each $t_{0}$,
there exists $\delta ^{\prime \prime }>0$ such that for any $t\in
(t_{0}-\delta ^{\prime \prime },t_{0}+\delta ^{\prime \prime })\cap \lbrack
0,T]$, 
\begin{equation}
\left\vert \Vert \nabla u(t)\Vert _{L_{x}^{2}}^{2}-\Vert \nabla
u(t_{0})\Vert _{L_{x}^{2}}^{2}\right\vert \leq \tfrac{1}{16}\varepsilon ^{2}
\label{E:B}
\end{equation}%
Note that $\delta ^{\prime \prime }$ depends on $u$ itself (or the
\textquotedblleft modulus of continuity\textquotedblright\ of $u$), unlike $%
\delta ^{\prime }$ that depends only on $M$, $\Vert u\Vert
_{L_{[0,T]}^{\infty }H^{1}}$, and $\varepsilon $. Now let $\delta =\min
(\delta ^{\prime },\delta ^{\prime \prime })$. Then by \eqref{E:A} and %
\eqref{E:B} we have that for any $t\in (t_{0}-\delta ,t_{0}+\delta )\cap
\lbrack 0,T]$, 
\begin{equation*}
\left\vert \Vert \nabla P_{>M}u(t)\Vert _{L_{x}^{2}}^{2}-\Vert \nabla
P_{>M}u(t_{0})\Vert _{L_{x}^{2}}^{2}\right\vert \leq \tfrac{1}{4}\varepsilon
^{2}
\end{equation*}%
For each $t\in \lbrack 0,T]$, there exists $M_{t}$ such that 
\begin{equation*}
\Vert \nabla P_{>M_{t}}u(t)\Vert _{L_{x}^{2}}\leq \tfrac{1}{2}\varepsilon
\end{equation*}%
By the above, there exists $\delta _{t}>0$ such that on $(t-\delta
_{t},t+\delta _{t})$, we have 
\begin{equation*}
\Vert \nabla P_{>M_{t}}u\Vert _{L_{(t-\delta _{t},t+\delta _{t})}^{\infty
}L_{x}^{2}}\leq \varepsilon
\end{equation*}%
Here, $\delta _{t}>0$ depends on $u$ and $M_{t}$. The collection of
intervals $(t-\delta _{t},t+\delta _{t})$, as $t$ ranges over $[0,T]$ is an
open cover of $[0,T]$. Let 
\begin{equation*}
(t_{1}-\delta _{t_{1}},t_{1}+\delta _{t_{1}})\,,\ldots (t_{J}-\delta
_{t_{J}},t_{J}+\delta _{t_{J}})
\end{equation*}%
be an open cover of $[0,T]$. Letting 
\begin{equation*}
M=\max (M_{t_{1}},\ldots ,M_{t_{J}}).
\end{equation*}
we have established \eqref{E:UTFL01}.

Now conversely suppose that $u\in C_{[0,T]}^0H_{x,\text{weak}}^1\cap
C_{[0,T]}^1H_{x,\text{weak}}^{-1}$ and $u$ satisfies \eqref{E:UTFL01}. Then
we claim that $u\in C_{[0,T]}^0H_x^1 \cap C_{[0,T]}^1 H_x^{-1}$. Let $t_0\in
[0,T]$ be arbitrary. If $u$ is not strongly continuous at $t_0$, then there
exist $\epsilon>0$ and a sequence $t_k \to t_0$ such that $\|u(t_k) - u(t_0)
\|_{H_x^1} > 2\epsilon$. Then for each $k$, there exists $\phi_k \in
H_x^{-1} $ with $\|\phi_k \|_{H_x^{-1}} \leq 1$ and 
\begin{equation}  \label{E:UTFL02}
|\langle u(t_k) - u(t_0) , \phi_k \rangle| > 2\epsilon
\end{equation}
Get $M$ as in \eqref{E:UTFL01}. Then 
\begin{equation}  \label{E:UTFL03}
|\langle u(t_k)-u(t_0) , P_{>M} \phi_k \rangle | \leq \epsilon
\end{equation}
On the other hand, by the Rellich-Kondrachov compactness theorem, there
exists a subsequence such that $P_{\leq M} \phi_k \to \phi$ in $H_x^{-1}$.
This combined with the assumption that $u$ is weakly continuous implies that 
\begin{equation}  \label{E:UTFL04}
\langle u(t_k) - u(t_0) , P_{\leq M} \phi_k \rangle \to 0
\end{equation}
But \eqref{E:UTFL03} and \eqref{E:UTFL04} contradict \eqref{E:UTFL02}. The
proof that $\partial_t u$ is strongly continuous is similarly
straightforward.

\section{An Extended Klainerman-Machedon Board Game\label%
{Sec:ElaboratedKMBoardGame}}

This section is divided into two main parts. We first provide as a warm up,
in \S \ref{Sec:Find Integration Limits}, a more elaborated proof of the
original Klainerman-Machedon Board game (Lemma \ref%
{lemma:Klainerman-MachedonBoardGame}) which yields the previously unknown
time integration limits in (\ref{eqn:KMBoardGameSum}). We then prove, in \S %
\ref{Sec:Wild1}-\ref{Sec:Wild5}, an extension of Lemma \ref%
{lemma:Klainerman-MachedonBoardGame} which further combines the summands
inside $J^{(k+1)}(f^{(k+1)})(t_{1},\underline{t}_{k+1})$ to enable the
application of $U$-$V$ spaces techniques.

\subsection{A More Elaborated Proof of Lemma \protect\ref%
{lemma:Klainerman-MachedonBoardGame}\label{Sec:Find Integration Limits}}

Let us first give a brief review of the original Klainerman-Machedon (KM)
board game which, since invented, has been used in every paper in which the
analysis of Gross-Pitaevskii hiearchy is involved. Recall the notation of $%
\mu$ in Lemma \ref{lemma:Klainerman-MachedonBoardGame}: $\left\{ \mu
\right\} $ is a set of maps from $\{2,\ldots ,k+1\}$ to $\{1,\ldots ,k\}$
satisfying $\mu(2)=1$ and $\mu(l)<l$ for all $l,$ and 
\begin{eqnarray*}
J_{\mu}^{(k+1)}(f^{(k+1)})(t_{1},\underline{t}_{k+1})
&=&U^{(1)}(t_{1}-t_{2})B_{1,2}U^{(2)}(t_{2}-t_{3})B_{\mu(3),3}\cdots \\
&&\cdots U^{(k)}(t_{k}-t_{k+1})B_{\mu(k+1),k+1}(f^{(k+1)}(t_{k+1}))
\end{eqnarray*}

\begin{example}
An example of $\mu$ when $k=5$ is 
\begin{equation*}
\begin{tabular}{c|ccccc}
$j$ & $2$ & $3$ & $4$ & $5$ & $6$ \\ \hline
$\mu $ & $1$ & $1$ & $3$ & $2$ & $1$%
\end{tabular}
.
\end{equation*}
\end{example}

If $\mu $ satisfies $\mu (j)\leq \mu (j+1)$ for $2\leq j\leq k$ in addition
to $\mu (j)<j$ for all $2\leq j\leq k+1$, then it is in \emph{upper-echelon
form}\footnote{%
This word makes more sense when one uses the matrix / board game
representation of $J_{\mu}^{(k+1)}(f^{(k+1)})$ in \cite{KM}.} as they are
called in \cite{KM}.

Let $\mu $ be a collapsing map as defined above and $\sigma $ a permutation
of $\{2,\ldots ,k+1\}$. A \emph{Klainerman-Machedon acceptable move}, which
we denote $\text{KM}(j,j+1)$, is allowed when $\mu (j)\neq \mu (j+1)$ and $%
\mu(j+1)<j$, and is the following action: $(\mu ^{\prime },\sigma ^{\prime
})=\text{KM}(j,j+1)(\mu ,\sigma )$: 
\begin{align*}
\mu ^{\prime }& =(j,j+1)\circ \mu \circ (j,j+1) \\
\sigma ^{\prime }& =(j,j+1)\circ \sigma
\end{align*}%
A key observation in Klainerman-Machedon \cite{KM} is that if $(\mu ^{\prime
},\sigma ^{\prime })=\text{KM}(j,j+1)(\mu ,\sigma )$ and $f^{(k+1)}$ is a
symmetric density, then 
\begin{equation}
J_{\mu ^{\prime }}^{(k+1)}(f^{(k+1)})(t_{1},{\sigma ^{\prime }}^{-1}(%
\underline{t}_{k+1}))=J_{\mu }^{(k+1)}(f^{(k+1)})(t_{1},{\sigma }^{-1}(%
\underline{t}_{k+1}))  \label{E:KM-den1}
\end{equation}%
where, 
\begin{equation*}
\text{for }\underline{t}_{k+1}=(t_{2},\ldots ,t_{k+1})\text{ we define }%
\sigma ^{-1}(\underline{t}_{k+1})=(t_{\sigma ^{-1}(2)},\ldots ,t_{\sigma
^{-1}(k+1)})
\end{equation*}%
Associated to each $\mu $ and $\sigma $, we define the Duhamel integrals 
\begin{equation}
I(\mu ,\sigma ,f^{(k+1)})(t_{1})=\int_{t_{1}\geq t_{\sigma (2)}\geq \cdots
\geq t_{\sigma (k+1)}}J_{\mu }^{(k+1)}(f^{(k+1)})(t_{1},\underline{t}%
_{k+1})\,d\underline{t}_{k+1}  \label{E:KM-den2}
\end{equation}%
It follows from \eqref{E:KM-den1} that 
\begin{equation*}
I(\mu ^{\prime },\sigma ^{\prime (k+1)})=I(\mu ,\sigma ,f^{(k+1)})
\end{equation*}%
It is clear that we can combine Klainerman-Machedon acceptable moves as
follows: if $\rho $ is a permutation of $\{2,\ldots ,k+1\}$ such that it is
possible to write $\rho $ as a composition of transpositions 
\begin{equation*}
\rho =\tau _{1}\circ \cdots \circ \tau _{r}
\end{equation*}%
for which each operator $\text{KM}(\tau _{j})$ on the right side of the
following is an acceptable action 
\begin{equation*}
\text{KM}(\rho )\overset{\mathrm{def}}{=}\text{KM}(\tau _{1})\circ \cdots
\circ \text{KM}(\tau _{r})
\end{equation*}%
then $\text{KM}(\rho )$, defined by this composition, is acceptable as well.
In this case $(\mu^{\prime },\sigma ^{\prime })=\text{KM}(\rho )(\mu ,\sigma
)$ and 
\begin{align*}
& \mu ^{\prime } = \rho \circ \mu \circ \rho^{-1} \\
&\sigma ^{\prime } =\rho \circ \sigma
\end{align*}%
\eqref{E:KM-den1} and \eqref{E:KM-den2} hold as well. If $\mu $ and $\mu
^{\prime }$ are such that there exists $\rho $ as above for which $(\mu
^{\prime },\sigma ^{\prime })=\text{KM}(\rho )(\mu ,\sigma )$ then we say
that $\mu ^{\prime }$ and $\mu $ are \emph{KM-relatable}. This is an
equivalence relation that partitions the set of collapsing maps into
equivalence classes.

In short, one can describe the KM board game in \cite{KM} which combines the 
$k!$ many terms in $J^{(k+1)}(f^{(k+1)})$ as the following.

\begin{algorithm}[\protect\cite{KM}]
\label{alg:KMOriginal} \leavevmode

\begin{enumerate}
\item Convert each of the $k!$ many $\mu _{\text{in}}^{\prime }s$ in $%
J^{(k+1)}(f^{(k+1)})$, into one of the $\leqslant 4^{k}$ many upper echelon
form $\mu _{\text{out}}$ via acceptable moves, defined in the board game
argument, and at the same time produce an array $\sigma $ which changes the
time integration domain from the simplex 
\begin{equation*}
t_{1}\geqslant t_{2}\geqslant t_{3}\geqslant ...\geqslant t_{k+1},
\end{equation*}%
into the simplex 
\begin{equation*}
t_{1}\geqslant t_{\sigma (2)}\geqslant t_{\sigma (3)}...\geqslant t_{\sigma
\left( k+1\right) }.
\end{equation*}%
Hence, there are $\leqslant 4^{k}$ classes on the right hand side of (\ref%
{eqn:KMBoardGameSum}).

\item For each upper echelon form $\mu _{\text{out}}$, take a union of the
time integration domains of its $\mu _{\text{in}}$'s after the acceptable
moves and use it as the time integration domain for the whole class. Thus,
the integration domain $D_{m}$ on the right hand side of (\ref%
{eqn:KMBoardGameSum}) depends on $\mu _{m}$ and we have successfully
combined $k!$ summands into $\leqslant 4^{k}$ summands.
\end{enumerate}
\end{algorithm}

The key take away in Algorithm \ref{alg:KMOriginal} is that, though very
unobvious, quite a few of the summands in $J^{(k+1)}(f^{(k+1)})$ actually
have the same integrand if one switches the variable labelings in a clever
way. Algorithm \ref{alg:KMOriginal} leaves only one ambiguity, that is, the
time integration domain $D_{m}$, which is, obviously very complicated for
large $k,$ as it is a union of a very large number of simplexes in high
dimension under the action of a proper subset of the permutation group $%
S_{k} $ depending on the integrand. So far, for the analysis of GP
hierarchies on $\mathbb{R}^{d}$/$\mathbb{T}^{d}$, $d\leqslant 3$, knowing $%
D_{m}\subset \left[ 0,1\right] ^{k}$ has been enough as the related $%
L_{t}^{1}H^{s}$ estimates are true. $\mathbb{T}^{4}$ appears to be the first
domain on which one has to know what $D_{m}$ is so that one can, at least,
have a chance to use space-time norms like $X_{s,b}$ and $U$-$V$, as the
related $L_{t}^{1}H^{s}$ estimates fails.

It turns out that $D_{m}$ is, in fact, simple, as we will see. We now
present a more elaborated proof of Lemma \ref%
{lemma:Klainerman-MachedonBoardGame}, in which, $D_{m}$ is computed in a
clear way. Given a $\mu $, and hence a summand inside $J^{(k+1)}(f^{(k+1)})$
, we construct a binary tree with the following algorithm.

\begin{algorithm}
\label{alg:u to tree} \leavevmode

\begin{enumerate}
\item Set counter $j=2$

\item Given $j$, find the next pair of indices $a$ and $b$ so that $a>j$, $%
b>j$ and 
\begin{equation*}
\mu (a)=\mu (j)\text{ and }\mu (b)=j
\end{equation*}
and moreover $a$ and $b$ are the minimal indices for which the above
equalities hold. It is possible that there is no such $a$ and/or no such $b$.

\item At the node $j$, put $a$ as the left child and $b$ as the right child
(if there is no $a$, then the $j$ node will be missing a left child, and if
there is no $b$, then the $j$ node will be missing a right child.)

\item If $j=k+1,$ then stop, otherwise set $j=j+1$ and go to step 2.
\end{enumerate}
\end{algorithm}

\begin{example}
\label{example:Tree-1}\footnote{%
This simple example is in fact one of the two largest $k=5$ upper echelon
classes in which there are eight $\mu ^{\prime }$s equilvalent to the upper
echelon form.}

\begin{minipage}{0.30\linewidth}
\begin{center}
\begin{tikzpicture} 
\node{$1$}
	child[missing]
	child{node{$2$}
		child{node{$3$}} 
		child{node{$5$}}
	};
\end{tikzpicture}
\end{center}
\end{minipage}
\begin{minipage}{0.70\linewidth}
Let us work with the following example 

\medskip

\begin{center}
\begin{tabular}{c|ccccc}
$j$ & $2$ & $3$ & $4$ & $5$ & $6$ \\ \hline
$\mu _{\text{out}}$ & $1$ & $1$ & $1$ & $2$ & $3$
\end{tabular}
\end{center}

\medskip

We start with $j=2$, and note that $\mu _{\text{out}}(2)=1$ so need to find
minimal $a>2$, $b>2$ such that $\mu (a)=1$ and $\mu (b)=2$. In this case, it
is $a=3$ and $b=5$, so we put those as left and right children of $2$,
respectively, in the tree (shown at left)
\end{minipage}

\bigskip

\begin{minipage}{0.30\linewidth}
\begin{center}
\begin{tikzpicture} 
\node{$1$}
	child[missing]
	child{node{$2$}
		child{node{$3$} 
			child{node{$4$}} 
			child{node{$6$}} 
		}
		child{node{$5$}}
	}; 
\end{tikzpicture}
\end{center}
\end{minipage}
\begin{minipage}{0.70\linewidth}
Now we move to $j=3$. Since $\mu _{\text{out}}(3)=1$, we find minimal $a$
and $b$ so that $a>3$, $b>3$ and $\mu (a)=1$ and $\mu (b)=3$. We find that $
a=4$ and $b=6$, so we put these as left and right children of $3$,
respectively, in the tree (shown at left).  Since all indices appear in the tree, it is complete.
\end{minipage}
\end{example}

\begin{definition}
A binary tree is called an \emph{admissible} tree if every child node's
label is strictly larger than its parent node's label.\footnote{%
This is certainly a natural requirement coming from the hierarchy.} For an
admissible tree, we call, the graph of the tree without any labels in its
nodes, the \emph{skeleton} of the tree.
\end{definition}

\begin{minipage}{0.30\linewidth}
\begin{center}
\begin{tikzpicture} 
\node{$\encircle{1}$}
	child[missing]
	child{node{$\encircle{\;}$}
		child{node{$\encircle{\;}$} 
			child{node{$\encircle{\;}$}} 
			child{node{$\encircle{\;}$}} 
		}
		child{node{$\encircle{\;}$}}
	}; 
\end{tikzpicture}
\end{center}
\end{minipage}
\begin{minipage}{0.70\linewidth}
For example, the skeleton of the tree in Example \ref{example:Tree-1} is shown at left.

\bigskip

By the hierarchy structure, Algorithm \ref{alg:u to tree}, which produces a
tree from a $\mu $, only produces admissible trees. As we have made a
distinction between left and right children in the algorithm, the procedure
is reversible -- given an admissible binary tree, we can uniquely
reconstruct the $\mu $ that generated it.
\end{minipage}

\begin{algorithm}
\leavevmode
\label{alg:tree to u}

\begin{enumerate}
\item For every right child, $\mu $ maps the child value to the parent value
(i.e. if $f$ is a right child of $d$, then $\mu (f)=d$). Start by filling
these into the $\mu $ table.

\item Fill in the table using that for every left child, $\mu $ maps the
child value to $\mu (\text{parent value})$.
\end{enumerate}
\end{algorithm}

\begin{example}
Suppose we are given the tree

\begin{minipage}{0.375\linewidth}
\begin{center}
\begin{tikzpicture} 
\node{$1$}
child[missing]
child{node{$2$}
	child{node{$3$} 
		child{node{$4$} 
			child[missing]
			child{node{$7$}} 
		}
		child{node{$6$} 
			child[missing] 
			child{node{$8$} 
				child{node{$9$}}
				child[missing] 
			} 
		} 
	} 
	child{node{$5$}}
}; 
\end{tikzpicture}
\end{center}
\end{minipage}
\begin{minipage}{0.575\linewidth}

Using that for every right child, $\mu$ maps the child value to the parent
value, we fill in the following values in the $\mu $ table:

\bigskip

\begin{center}
\begin{tabular}{c|cccccccc}
$j$ & $2$ & $3$ & $4$ & $5$ & $6$ & $7$ & $8$ & $9$ \\ \hline
$\mu $ & $1$ &  &  & $2$ & $3$ & $4$ & $6$ & 
\end{tabular}
\end{center}

\bigskip

Now we employ the left child rule, and note that since $3$ is a left child
of $2$ and $\mu (2)=1$, we must have $\mu (3)=1$, and etc. to recover

\bigskip

\begin{center}
\begin{tabular}{c|cccccccc}
$j$ & $2$ & $3$ & $4$ & $5$ & $6$ & $7$ & $8$ & $9$ \\ \hline
$\mu $ & $1$ & $1$ & $1$ & $2$ & $3$ & $4$ & $6$ & $6$
\end{tabular}
\end{center}
\end{minipage}
\end{example}

One can show that, in the tree representation of $\mu $, an acceptable move
defined in \cite{KM}, is the operation which switches the labels of two
nodes with consecutive labels on an admissible tree provided that the
outcome is still an admissible tree by writing out the related trees on \cite%
[p.180-182]{KM}. For example, interchanging the labeling of 5 and 6 in the
tree in Example \ref{example:Tree-1} is an acceptable move. That is,
acceptable moves in \cite{KM} preserve the tree structures but permute the
labeling under the admissibility requirement. Two collapsing maps $\mu$ and $%
\mu^{\prime }$ are KM-relatable if and only the trees corresponding to $\mu$
and $\mu^{\prime }$ have the same skeleton.

Given $k$, we would like to have the number of different binary tree
structures of $k$ nodes. This number is exactly defined as the Catalan
number and is controlled by $4^{k}$. Hence, we just provided a proof of the
original Klainerman-Machedon board game, neglecting the trees showing the
acceptable moves' effect on a tree.

But let us get to the main \textquotedblleft elaborate\textquotedblright\
part, namely, how to compute $D_{m}$ for a given upper echelon class now. To
this end, we need to define what is an upper echelon form. Though the
requirement $\mu (j)\leq \mu (j+1)$ for $2\leq j\leq k$ is good enough, we
give an algorithm which produces the upper echelon tree given the tree
structure, as the tree representation of an upper echelon form is in fact
labeled in sequential order. See, for example, the tree in Example \ref%
{example:Tree-1}.

\begin{algorithm}
\label{alg:TreeToUpperEchelon}\footnote{%
The difference between the definition of left and right children in
Algorithm \ref{alg:u to tree} makes all the enumeration algorithms in this
paper address left branches first. See also \S \ref{Sec:Wild2} for the
enumeration of the tamed form.} \leavevmode

\begin{enumerate}
\item Given a tree structure with $k$ nodes, label the top node with $2$ and
set counter $j=2.$

\item If $j=k+1$, then stop, otherwise continue.

\item If the node labeled $j$ has a left child, then label that left child
node with $j+1$, set counter $j=j+1$ and go to step (2). If not, continue.

\item In the already labeled nodes which has an empty right child, search
for the node with the smallest label. If such a node can be found, label
that node's empty right child as $j+1$, set counter $j=j+1,$ and go to step
(2). If none of the labeled nodes has an empty right child, then stop.
\end{enumerate}
\end{algorithm}

\begin{definition}
We say $\mu $ is in upper echelon form if $\mu (j)\leq \mu (j+1)$ for $2\leq
j\leq k$ or its corresponding tree given by Algorithm \ref{alg:u to tree}
agrees with the tree with the same skeleton given by Algorithm \ref%
{alg:TreeToUpperEchelon}.
\end{definition}

We define a map $T_{D}$ which maps an upper echelon tree to a time
integration domain / a set of inequality relations by 
\begin{equation}  \label{E:TD-def}
\begin{aligned} T_{D}(\alpha ) = \{\; t_{j}\geqslant t_{k} \; : \; &
j,k\text{ are labels on nodes of }\alpha \\ &\text{such that the }k\text{
node is a child of the }j\text{ node} \; \} \end{aligned}
\end{equation}
where $\alpha $ is an upper echelon tree. We then have the integration
domain as follows.

\begin{proposition}
\label{Prop:KMIntegrationLimits}Given a $\mu _{m}$ in upper echelon form, we
have 
\begin{equation*}
\sum_{\mu \sim \mu _{m}}\int_{t_{1}\geqslant t_{2}\geqslant t_{3}\geqslant
...\geqslant t_{k+1}}J_{\mu_m }^{(k+1)}(f^{(k+1)})(t_{1},\underline{t}%
_{k+1})d \underline{t}_{k+1}=\int_{T_{D}(\mu _{m})}J_{\mu
_{m}}^{(k+1)}(f^{(k+1)})(t_{1},\underline{t}_{k+1})d\underline{t}_{k+1}.
\end{equation*}
Here, $\mu \sim \mu _{m}$ means that $\mu $ is equivalent to $\mu _{m}$
under acceptable moves / the trees representing $\mu $ and $\mu _{m}$ have
the same structure and $T_{D}(\mu _{m})$ is the domain defined in %
\eqref{E:TD-def}.
\end{proposition}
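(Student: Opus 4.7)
The plan is to establish the identity in three stages, combining the tree dictionary developed just above with the KM identity \eqref{E:KM-den1} and a standard partition-by-linear-extensions argument. Throughout, fix the upper echelon $\mu _{m}$ and let $\alpha _{m}$ denote its tree.

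\textbf{Stage 1 (Bijection with linear extensions).} First I would argue that the assignment $\mu \mapsto \sigma $, where $\sigma $ is the composition of the transpositions recorded along any sequence of acceptable moves carrying $\mu $ to $\mu _{m}$, is a well-defined bijection from the equivalence class $\{\mu :\mu \sim \mu _{m}\}$ onto the set of linear extensions of the partial order on $\{2,\ldots ,k+1\}$ determined by the ancestor relation in $\alpha _{m}$. Independence of $\sigma $ from the particular chain of moves follows because any two chains differ by a composition that fixes the $\mu _{m}$ tree structure, which must be the identity (the upper echelon representative of an equivalence class is unique by Algorithm \ref{alg:TreeToUpperEchelon}). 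The admissibility of the tree associated to $\mu $ by Algorithm \ref{alg:u to tree} is exactly the statement that $\sigma $ respects the parent--ancestor order of $\alpha _{m}$; conversely, every linear extension produces an admissible relabeling of $\alpha _{m}$, hence a valid $\mu \sim \mu _{m}$.

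\textbf{Stage 2 (Change of variables).} Iterating \eqref{E:KM-den1} along such a chain yields, for each $\mu \sim \mu _{m}$ with associated $\sigma $,
\begin{equation*}
\int_{t_{1}\geq t_{2}\geq \cdots \geq t_{k+1}}J_{\mu}^{(k+1)}(f^{(k+1)})(t_{1},\underline{t}_{k+1})\,d\underline{t}_{k+1}=\int_{t_{1}\geq t_{\sigma (2)}\geq \cdots \geq t_{\sigma (k+1)}}J_{\mu_m}^{(k+1)}(f^{(k+1)})(t_{1},\underline{t}_{k+1})\,d\underline{t}_{k+1},
\end{equation*}
which is just the substitution $s_{j}=t_{\sigma ^{-1}(j)}$ turning the inequality $t_{j}\geq t_{j+1}$ into $s_{\sigma (j)}\geq s_{\sigma (j+1)}$.

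\textbf{Stage 3 (Union equals $T_{D}(\mu _{m})$).} Summing over $\mu \sim \mu _{m}$ and equivalently over the corresponding $\sigma $'s, I would show that the permuted simplexes $\{t_{1}\geq t_{\sigma (2)}\geq \cdots \geq t_{\sigma (k+1)}\}$ form a partition, up to a Lebesgue-null set, of $T_{D}(\mu _{m})$. This is the classical order-polytope decomposition: a tuple $(t_{2},\ldots ,t_{k+1})\in \lbrack 0,t_{1}]^{k}$ with pairwise distinct entries lies in exactly one such simplex, namely the one associated to the $\sigma $ that lists the indices in decreasing order of their $t$-values, and this $\sigma $ is a linear extension of $\alpha _{m}$ if and only if the tuple satisfies every parent--child inequality defining $T_{D}(\mu _{m})$. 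The set where two coordinates coincide has measure zero.

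The main obstacle will be Stage 1: one must check carefully that the rule $\mu \mapsto \sigma $ is well defined and bijective. The independence from the chosen chain relies on the uniqueness of the upper echelon representative within an equivalence class, which in turn rests on Algorithm \ref{alg:TreeToUpperEchelon}. Stages 2 and 3 are then routine bookkeeping together with the well-known fact that the order polytope of a finite poset is tiled by the simplexes of its linear extensions, so the combinatorial heart of the proposition really is the poset-theoretic identification carried out in Stage 1.
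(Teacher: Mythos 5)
Your proposal is correct and follows essentially the paper's own route: your Stage 1 identification of the equivalence class with the linear extensions of the tree's parent--child partial order is exactly the paper's equality $\Sigma_{1}=\Sigma_{2}$, your Stage 2 is the iterated use of \eqref{E:KM-den1}, and your Stage 3 union of permuted simplexes is the paper's final display giving $T_{D}(\mu_{m})$. The only difference is presentational: the paper argues through a representative example and remarks that the general case just requires rewriting $\Sigma_{1}$ and $\Sigma_{2}$, whereas you give the general statement directly (and make explicit the measure-zero overlaps that justify turning the union into a sum).
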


\begin{proof}

We prove by an example as the notation is already heavy. For the general
case, one merely needs to rewrite $\Sigma _{1}$ and $\Sigma _{2}$, to be
defined in this proof. The key is the admissible condition or the simple
requirement that the child must carry a larger lable than the parent.

Recall the upper echelon tree in Example \ref{example:Tree-1}, and denote it
with $\alpha $. Here are all the admissible trees equivalent to $\alpha .$


\begin{minipage}{1.45in}
\begin{tikzpicture}
\node{$1$} 
	child[missing] 
	child{node{$2$}
		child{node{$3$}
			child{node{$4$}}
			child{node{$6$}}
		}
		child{node{$5$}}
	};
\end{tikzpicture}
\end{minipage}
\begin{minipage}{1.45in}
\begin{tikzpicture}
\node{$1$} 
	child[missing] 
	child{node{$2$}
		child{node{$3$}
			child{node{$5$}}
			child{node{$6$}}
		}
		child{node{$4$}}
	};
\end{tikzpicture}
\end{minipage}
\begin{minipage}{1.45in}
\begin{tikzpicture}
\node{$1$} 
	child[missing] 
	child{node{$2$}
		child{node{$4$}
			child{node{$5$}}
			child{node{$6$}}
		}
		child{node{$3$}}
	};
\end{tikzpicture}
\end{minipage}
\begin{minipage}{1.45in}
\begin{tikzpicture}
\node{$1$} 
	child[missing] 
	child{node{$2$}
		child{node{$3$}
			child{node{$6$}}
			child{node{$5$}}
		}
		child{node{$4$}}
	};
\end{tikzpicture}
\end{minipage}

\bigskip

\begin{minipage}{1.45in}
\begin{tikzpicture}
\node{$1$} 
	child[missing] 
	child{node{$2$}
		child{node{$4$}
			child{node{$6$}}
			child{node{$5$}}
		}
		child{node{$3$}}
	};
\end{tikzpicture}
\end{minipage}
\begin{minipage}{1.45in}
\begin{tikzpicture}
\node{$1$} 
	child[missing] 
	child{node{$2$}
		child{node{$3$}
			child{node{$6$}}
			child{node{$4$}}
		}
		child{node{$5$}}
	};
\end{tikzpicture}
\end{minipage}
\begin{minipage}{1.45in}
\begin{tikzpicture}
\node{$1$} 
	child[missing] 
	child{node{$2$}
		child{node{$3$}
			child{node{$5$}}
			child{node{$4$}}
		}
		child{node{$6$}}
	};
\end{tikzpicture}
\end{minipage}
\begin{minipage}{1.45in}
\begin{tikzpicture}
\node{$1$} 
	child[missing] 
	child{node{$2$}
		child{node{$3$}
			child{node{$4$}}
			child{node{$5$}}
		}
		child{node{$6$}}
	};
\end{tikzpicture}
\end{minipage}

\bigskip


We first read by definition that 
\begin{equation*}
T_{D}(\alpha )=\{t_{1}\geqslant t_{2},t_{2}\geqslant t_{3},t_{3}\geqslant
t_{4},t_{3}\geqslant t_{6},t_{2}\geqslant t_{5}\}\text{.}
\end{equation*}
Let $\sigma $ denote some composition of acceptable moves, we then notice
the equivalence of the two sets 
\begin{eqnarray*}
\Sigma _{1} &=&\left\{ \sigma :\sigma ^{-1}(1)<\sigma ^{-1}(2)<\sigma
^{-1}(3)<\sigma ^{-1}(4),\sigma ^{-1}(2)<\sigma ^{-1}(5),\sigma
^{-1}(3)<\sigma ^{-1}(6)\right\} , \\
\Sigma _{2} &=&\left\{ \sigma :\sigma \text{ takes input tree to }\alpha 
\text{ where the input tree is admissibile}\right\} ,
\end{eqnarray*}
both generated by the requirement that the child must carry a larger label
than the parent. That is, both $\Sigma _{1}$ and $\Sigma _{2}$ classifies
the whole upper echelon class represented by $\alpha $.

Hence, 
\begin{equation*}
\bigcup _{\sigma \in \Sigma _{1}}\left\{ t_{1}\geqslant
t_{\sigma(2)}\geqslant t_{\sigma (3)}...\geqslant t_{\sigma \left( 6\right)
}\right\} =\{t_{1}\geqslant t_{2}\geqslant t_{3}\geqslant
t_{4},t_{2}\geqslant t_{5},t_{3}\geqslant t_{6}\}=T_{D}(\alpha )
\end{equation*}
and we are done.
\end{proof}

\subsection{Signed KM Acceptable Moves\label{Sec:Wild1}}

While Proposition \ref{Prop:KMIntegrationLimits} shows that summing over an
entire KM upper echelon class yields a time integration domain with clean
structure, it is not sufficient for our purposes. We prove an extended KM
board game in \S \ref{Sec:Wild1}-\ref{Sec:Wild5}. Recall the key observation
of the KM board game is that, many summands in $J^{(k+1)}(f^{(k+1)})$
actually have the same integrand if one switches the variable labelings,
thus one can take the acceptable moves to combine them. In fact, one can
combine them even more after the acceptable moves to get the integration
domain $D_{m}$ larger.\footnote{%
We do not know if one could combine even more than what we are going to do
in \S \ref{Sec:Wild1}-\ref{Sec:Wild5}.} Instead of aiming to reduce the
number of summands in $J^{(k+1)}(f^{(k+1)})$ even more, our goal this time
is to enlarge the integration domain when estimating $J_{\mu
,sgn}^{(k+1)}(f^{(k+1)})(t_{1},\underline{t}_{k+1})$ so that $U$-$V$
techniques can actually apply. Depending on the sign combination in $J_{\mu
_{m},sgn}^{(k+1)}(f^{(k+1)})(t_{1},\underline{t}_{k+1})$, one could run into
the problem that one needs to estimate the $x$ part and the $x^{\prime }$
part using the same time integral. This problem is another obstacle stopping 
$U$-$V$ space techniques from being used in the analysis of $GP$
hierarchies, away from the other obstacle that $D_{m}$ was previously
unknown.

From here on out, we denote the already unioned/combined integrals in one
echelon class a upper echelon class integral and we use Proposition \ref%
{Prop:KMIntegrationLimits} for its integration limits. We also put a $+$ or $%
-$ sign at the corresponding node of a tree as we are dealing with $J_{\mu
,sgn}^{(k+1)}(f^{(k+1)})(t_{1},\underline{t}_{k+1})$ in which there are $%
B^{+}$ and $B^{-}$ at each coupling. We start with the following example.

\begin{example}
\label{example:one could indeed combine more}The following two upper echelon
class intergrals 
\begin{eqnarray*}
I_{1}
&=&\int_{t_{4}=0}^{t_{1}}\int_{t_{2}=t_{4}}^{t_{1}}%
\int_{t_{3}=0}^{t_{2}}U^{(1)}(t_{1}-t_{2})B_{1,2}^{-}U^{(2)}(t_{2}-t_{3})B_{1,3}^{+}U^{(3)}(t_{3}-t_{4})B_{2,4}^{+}(f^{(4)})(t_{1},%
\underline{t}_{4})d\underline{t}_{4} \\
I_{2}
&=&\int_{t_{4}=0}^{t_{1}}\int_{t_{2}=0}^{t_{1}}%
\int_{t_{3}=t_{4}}^{t_{2}}U^{(1)}(t_{1}-t_{2})B_{1,2}^{+}U^{(2)}(t_{2}-t_{3})B_{1,3}^{-}U^{(3)}(t_{3}-t_{4})B_{3,4}^{+}(f^{(4)})(t_{1},%
\underline{t}_{4})d\underline{t}_{4}
\end{eqnarray*}%
actually have the same integrand if one does a $t_{2}\leftrightarrow t_{3}$
swap in $I_{1}$, despite that the trees corresponding to $I_{1}$ and $I_{2}$
have different skeltons. On the one hand, shorten $e^{i\left(
t_{i}-t_{j}\right) \triangle }$ as $U_{i,j}$, 
\begin{eqnarray*}
I_{1}
&=&\int_{t_{4}=0}^{t_{1}}\int_{t_{2}=t_{4}}^{t_{1}}%
\int_{t_{3}=0}^{t_{2}}U_{1,3}(\left\vert U_{3,4}\phi \right\vert
^{2}U_{3,4}\phi )(x_{1})U_{1,2}(\overline{U_{2,4}\phi }\overline{U_{2,4}\phi 
}U_{2,4}\left( \left\vert \phi \right\vert ^{2}\phi \right) )(x_{1}^{\prime
})d\underline{t}_{4} \\
&=&\int_{t_{4}=0}^{t_{1}}\int_{t_{2}=0}^{t_{1}}\int_{t_{3}=\max
(t_{2},t_{4})}^{t_{1}}U_{1,2}(\left\vert U_{2,4}\phi \right\vert
^{2}U_{2,4}\phi )(x_{1})U_{1,3}(\overline{U_{3,4}\phi }\overline{U_{3,4}\phi 
}U_{3,4}\left( \left\vert \phi \right\vert ^{2}\phi \right) )(x_{1}^{\prime
})d\underline{t}_{4} \\
&=&\int_{t_{4}=0}^{t_{1}}\int_{t_{2}=0}^{t_{1}}\int_{t_{3}=\max
(t_{2},t_{4})}^{t_{1}}U^{(1)}(t_{1}-t_{2})B_{1,2}^{+}U^{(2)}(t_{2}-t_{3})B_{1,3}^{-}U^{(3)}(t_{3}-t_{4})B_{3,4}^{+}(f^{(4)})(t_{1},%
\underline{t}_{4})d\underline{t}_{4}
\end{eqnarray*}%
where we have put in $f^{(4)}=\left( \left\vert \phi \right\rangle
\left\langle \phi \right\vert \right) ^{\otimes 4}$ for simplicity.\footnote{%
One could put a general symmetric $f^{(4)}$ here and get the same result.}%
Hence, 
\begin{equation*}
I_{1}+I_{2}=\int_{t_{4}=0}^{t_{1}}\int_{t_{2}=0}^{t_{1}}%
\int_{t_{3}=t_{4}}^{t_{1}}U^{(1)}(t_{1}-t_{2})B_{1,2}^{+}U^{(2)}(t_{2}-t_{3})B_{1,3}^{-}U^{(3)}(t_{2}-t_{3})B_{3,4}^{+}(f^{(4)})(t_{1},%
\underline{t}_{4})d\underline{t}_{4}.
\end{equation*}%
On the other hand, if one puts the $I_{1}$ tree on the left and the $I_{2}$
tree on the right, the trees read

\begin{minipage}{0.50\linewidth}
\begin{center}
\begin{tikzpicture}
\node{$1$} 
	child[missing] 
	child{node{$2-$}
		child{node{$3+$}}
		child{node{$4+$}}
	};
\end{tikzpicture}
\end{center}
\end{minipage}%
\begin{minipage}{0.50\linewidth}
\begin{center}
\begin{tikzpicture}
\node{$1$}
	child[missing]
	child{node{$2+$}
		child{node{$3-$}
			child[missing]
			child{node{$4+$}}
		}
		child[missing]
	};
\end{tikzpicture}
\end{center}
\end{minipage}
\end{example}

Example \ref{example:one could indeed combine more} shows that one could
indeed further combine the summands in $J^{(k+1)}(f^{(k+1)})$ after the
original KM board game has been performed. We will explain why our $U$-$V$
techniques apply to $I_{1}+I_{2}$ but not $I_{1}$, $I_{2}$ individually in 
\S \ref{sec:key example}. Despite Example \ref{example:one could indeed
combine more} uses the already combined upper echelon integrals, our
extended KM board game actually starts from scratch, that is, it starts from 
$\gamma ^{(1)}(t_{1})$ instead of already combined upper echelon integrals.
However, it is still a multi-step process. We will first switch the terms in 
$\gamma ^{(1)}(t_{1})$ into their tamed form via signed KM acceptable moves
in \S \ref{Sec:Wild1}-\ref{Sec:Wild2}, we then categorize the tamed forms
into tamed classes via the wild moves in \S \ref{Sec:Wild4}-\ref{Sec:Wild5}.

We now explain the program as follows: as before, start by expanding $\gamma
^{(1)}(t_{1})$ to coupling level $k$, which generates a sum expansion of $k!$
terms. But now for each of these $k!$ terms, expand the collapsing operators 
$B_{\mu (j),j}^{(j)}$ into $+$ and $-$ components, which introduces $2^{k}$
terms. Thus, in all, we have $2^{k}k!$ terms, each of which has
sign-dependent collapsing operators.

\begin{equation}
\gamma ^{(1)}=\sum_{\mu ,\func{sgn}}I(\mu ,\func{id},\func{sgn},\gamma
^{(k+1)})  \label{E:gamma-expansion}
\end{equation}%
where $\func{id}$ is the identity permutation on $\{2,\ldots ,k+1\},$ 
\begin{equation*}
I(\mu ,\sigma ,\func{sgn})=\int_{t_{1}\geq t_{\sigma (2)}\geq \cdots \geq
t_{\sigma (k+1)}}J_{\mu ,\func{sgn}}^{(k+1)}(\gamma ^{(k+1)})(t_{1},%
\underline{t}_{k+1})\,d\underline{t}_{k+1}
\end{equation*}%
and $J_{\mu ,\func{sgn}}^{(k+1)}$ is defined as in (\ref{E:JSgnDef}). The
equation \eqref{E:gamma-expansion} is a sum over all \emph{admissible} $\mu $%
, that is, collapsing maps that satisfy $\mu (j)<j$, of which there are $k!$%
. It is also a sum over all $\func{sgn}$ maps, of which there are $2^{k}$.

We define a signed version of the KM acceptable moves, still denoted $\text{%
KM}(j,j+1)$, defined provided $\mu (j)\neq \mu (j+1)$ and $\mu(j+1)<j$. It
is defined as the following action on a triple $(\mu ,\sigma ,\func{sgn})$: 
\begin{equation*}
(\mu ^{\prime },\sigma ^{\prime },\func{sgn}^{\prime })=\text{KM}(j,j+1)(\mu
,\sigma ,\func{sgn})
\end{equation*}%
where 
\begin{equation*}
\begin{aligned} &\mu' = (j,j+1) \circ \mu \circ (j,j+1) \\ &\sigma' =
(j,j+1) \circ \sigma\\ &\operatorname{sgn}' = \operatorname{sgn} \circ
(j,j+1) \\ \end{aligned}
\end{equation*}

Graphically, this means that nodes $j$ and $j+1$ belong to different left
branches and corresponds to switching nodes $j$ and $j+1$, \emph{leaving the
signs in place on the tree} -- in other words, the node previously labeled $%
j $ is relabeled $j+1$, and the node previously labeled $j+1$ is relabeled $%
j $, but the signs are left in place.

A slight modification of the arguments in \cite{KM} shows that, analogous to %
\eqref{E:KM-den1}, if $(\mu ^{\prime },\sigma ^{\prime },\func{sgn}^{\prime
})=\text{KM}(j,j+1)(\mu ,\sigma ,\func{sgn})$ and $f^{(k+1)}$ is a symmetric
density, then 
\begin{equation}
J_{\mu ^{\prime },\func{sgn}^{\prime }}^{(k+1)}(f^{(k+1)})(t_{1},{\sigma
^{\prime }}^{-1}(\underline{t}_{k+1}))=J_{\mu ,\func{sgn}%
}^{(k+1)}(f^{(k+1)})(t_{1},{\sigma }^{-1}(\underline{t}_{k+1}))
\label{E:KM-den1b}
\end{equation}%
It follows from \eqref{E:KM-den1b} that 
\begin{equation}
I(\mu ^{\prime },\sigma ^{\prime },\func{sgn}^{\prime (k+1)})=I(\mu ,\sigma ,%
\func{sgn},f^{(k+1)})  \label{E:KM-den2b}
\end{equation}%
As in the sign independent case (or more accurately, the combined sign case)
we can combine KM acceptable moves as follows: if $\rho $ is a permutation
of $\{2,\ldots ,k+1\}$ such that it is possible to write $\rho $ as a
composition of transpositions 
\begin{equation*}
\rho =\tau _{1}\circ \cdots \circ \tau _{r}
\end{equation*}%
for which each operator $\text{KM}(\tau _{j})$ on the right side of the
following is an acceptable action 
\begin{equation*}
\text{KM}(\rho )\overset{\mathrm{def}}{=}\text{KM}(\tau _{1})\circ \cdots
\circ \text{KM}(\tau _{r})
\end{equation*}%
then $\text{KM}(\rho )$, defined by this composition, is acceptable as well.
In this case $(\mu ^{\prime },\sigma ^{\prime },\func{sgn}^{\prime })=\text{%
KM}(\rho )(\mu ,\sigma ,\func{sgn})$ and 
\begin{align*}
& \mu ^{\prime} = \rho \circ \mu \circ \rho^{-1} \\
& \sigma ^{\prime }=\rho \circ \sigma \\
& \func{sgn}^{\prime }=\func{sgn}\circ \rho ^{-1}
\end{align*}%
Of course, \eqref{E:KM-den1b} and \eqref{E:KM-den2b} hold as well. If $(\mu ,%
\func{sgn})$ and $(\mu ^{\prime },\func{sgn}^{\prime })$ are such that there
exists $\rho $ as above for which $(\mu ^{\prime },\sigma ^{\prime },\func{%
sgn}^{\prime })=\text{KM}(\rho )(\mu ,\sigma ,\func{sgn}^{\prime })$ then we
say that $(\mu ^{\prime },\func{sgn}^{\prime })$ and $(\mu ,\func{sgn})$ are 
\emph{KM-relatable}. This is an equivalence relation that partitions the set
of collapsing map/sign map pairs into equivalence classes. In the graphical
representation two collapsing map/sign map pairs are KM-relatable if and
only if they have the same signed skeleton tree.

Whereas we could use the signed KM acceptable moves to convert an arbitrary
admissible $\mu $ to an upper echelon $\mu ^{\prime }$, this will no longer
suit our purpose. Instead, our program will be to convert each pair $(\mu ,%
\func{sgn})$ to a \emph{tamed} form, which we define in the next section.
The reason for our preference of tamed form over upper echelon form is that
it is invariant under \emph{wild moves}, to be introduced in \S \ref%
{Sec:Wild4}

\pagebreak

\subsection{Tamed Form\label{Sec:Wild2}}

In this section, we define what it means for a pair $(\mu ,\func{sgn})$, and
its corresponding tree representation, to be \emph{tamed, }in Definition \ref%
{def:tamedF}. Then first through an example, we present an algorithm for
producing the tamed enumeration of a signed skeleton. The general algorithm
is then stated in Algorithm \ref{alg:tamed enumeration}. Notice that it
produces a different enumeration from Algorithm \ref{alg:TreeToUpperEchelon}%
. Compared with Algorithm \ref{alg:TreeToUpperEchelon}, not only the tamed
form enumeration deals with left branches first, it also deals with "$+$"
first.\footnote{%
By symmetry, one could deal with "$-$" first here to get a very similar
tamed form. But left and right branches are not symmetric as they are
defined differently.} In \S \ref{Sec:Wild3}, we exhibit how to reduce a
signed tree with same skeleton but different enumeration into the tamed form
using signed KM acceptable moves.

We will now give a nongraphical set of conditions on $\mu$ and $\func{sgn}$
that determine whether or not $(\mu,\func{sgn})$ is tamed. First, we define
the concept of \emph{tier}. We say that $j\geq 2$ is tier $q$ if 
\begin{equation*}
\mu^q(j) = 1 \quad \text{but} \quad \mu^{q-1}(j) >1
\end{equation*}
where $\mu^q = \mu \circ \cdots \circ \mu$, the composition taken $q$ times.
We write $t(j)$ for the tier value of $j$.

\begin{definition}
\label{def:tamedF}A pair $(\mu ,\func{sgn})$ is \emph{tamed} if it meets the
following four requirements:

\begin{enumerate}
\item If $t(\ell )<t(r)$, then $\ell <r.$

\item If $t(\ell )=t(r)$, $\mu^2(\ell) = \mu^2(r)$, $\func{sgn}(\mu (\ell ))=%
\func{sgn}(\mu (r))$, and $\mu (\ell )<\mu (r)$, then $\ell <r.$

\item If $t(\ell )=t(r)$, $\mu^2(\ell) = \mu^2(r)$, $\func{sgn}(\mu (\ell
))=+$, and $\func{sgn}(\mu (r))=-$, then $\ell <r.$

\item If $t(\ell) = t(r)$, $\mu^2(\ell) \neq \mu^2(r)$, $\mu(\ell)<\mu(r)$,
then $\ell<r$.
\end{enumerate}
\end{definition}

Note that the statement $\mu^2(\ell) = \mu^2(r)$ means graphically that the
parents of $\ell$ and $r$ belong to the same left branch. Conditions (2),
(3), and (4) specify the ordering for $\ell$ and $r$ belonging to the same
tier, and the rule depends upon whether or not the \emph{parents} of $\ell$
and $r$ belong to the same left branch. If they do, rule (3) says that a
positive parent dominates over a negative parent, but rule (2) says that if
the parents are of the same sign, then the ordering follows the parental
ordering. Finally, if the parents do not belong to the same left branch,
rule (4) says that the ordering follows the parental ordering regardless of
the signs of the parents.

\begin{example}
The $(\mu ,\func{sgn})$ pair with tier properties indicated in the following
chart%
\begin{equation*}
\begin{tabular}{c|ccccccccccccc}
$j$ & 2 & $3$ & $4$ & $5$ & $6$ & $7$ & $8$ & $9$ & $10$ & $11$ & $12$ & $13$
& $14$ \\ \hline
$\mu (j)$ & $1$ & $1$ & $1$ & $1$ & $5$ & $5$ & $2$ & $2$ & $7$ & $7$ & $9$
& $9$ & $8$ \\ 
$\func{sgn}(j)$ & $-$ & $-$ & $+$ & $+$ & $-$ & $+$ & $-$ & $+$ & $-$ & $+$
& $+$ & $-$ & $+$ \\ 
$t(j)$ & $1$ & $1$ & $1$ & $1$ & $2$ & $2$ & $2$ & $2$ & $3$ & $3$ & $3$ & $%
3 $ & $3$%
\end{tabular}%
\end{equation*}%
is tamed. All four conditions in Definition \ref{def:tamedF} can be checked
from the above chart. This is in fact the $(\mu ,\func{sgn})$ pair that
appears in the example that follows.
\end{example}

In the example below, we illustrate an algorithm for determining the unique
tamed enumeration of a signed skeleton tree. After the example is completed,
we will give the general form of the algorithm.

\newpage

\begin{minipage}[t]{0.45\linewidth}
\vspace{0pt}
\begin{center}
\begin{tikzpicture}
\node{$\encircle{1}$}
	child[missing]
	child{node{$\encircle{\;}-$}
		child{node{$\encircle{\;}-$}
			child{node{$\encircle{\;}+$}
				child{node{$\encircle{\;}+$}
					child[missing]
					child{node{$\encircle{\;}-$}
						child{node{$\encircle{\;}+$}
							child[missing]
							child{node{$\encircle{\;}-$}
								child{node{$\encircle{\;}+$}}
								child[missing]
							}
						}
						child[missing]
					}
				}
				child[missing]
			}
			child[missing]
		}
		child{node{$\encircle{\;}-$}
			child{node{$\encircle{\;}+$}
				child[missing]
				child{node{$\encircle{\;}+$}
					child{node{$\encircle{\;}-$}}
					child[missing]
				}
			}
			child{node{$\encircle{\;}+$}}
		}
	};
\end{tikzpicture}
\end{center}
\end{minipage}%
\begin{minipage}[t]{0.55\linewidth}
We illustrate the tamed form enumeration with the following example.  Let's start with the following skeleton (on the left) with only the signs indicated.  (Recall that KM acceptable moves will leave the signs in place in the tree and just change the numbering of the nodes).
Start by considering all nodes mapping to $1$ (the universal ancestor) - this is the left branch attached to $1$ that is four nodes long in the order $--++$, and we enumerate it in order as $2,3,4,5$.
\end{minipage}

\vspace{-2in}

\begin{minipage}[b]{0.55\linewidth}
We then put this full left branch in the (empty) queue, but list the $+$ nodes first and then the $-$ nodes
$$\text{Queue:  } 4+, 5+, 2-, 3-$$
Then we start working along the queue from left to right.  Since $4+$ has no right child, we skip it and move to $5+$.  Since $5+$ does have a right child, we label it with the next available number $6$, and completely enumerate the entire left branch that starts with this $6$ node (that means, in this case, labeling $6-$ and $7+$ as shown on the next graph).   
\end{minipage}%
\begin{minipage}[b]{0.45\linewidth}
\begin{center}
\begin{tikzpicture}
\node{$\encircle{1}$}
	child[missing]
	child{node{$\encircle{2}-$}
		child{node{$\encircle{3}-$}
			child{node{$\encircle{4}+$}
				child{node{$\encircle{5}+$}
					child[missing]
					child{node{$\encircle{\;}-$}
						child{node{$\encircle{\;}+$}
							child[missing]
							child{node{$\encircle{\;}-$}
								child{node{$\encircle{\;}+$}}
								child[missing]
							}
						}
						child[missing]
					}
				}
				child[missing]
			}
			child[missing]
		}
		child{node{$\encircle{\;}-$}
			child{node{$\encircle{\;}+$}
				child[missing]
				child{node{$\encircle{\;}+$}
					child{node{$\encircle{\;}-$}}
					child[missing]
				}
			}
			child{node{$\encircle{\;}+$}}
		}
	};
\end{tikzpicture}
\end{center}
\end{minipage}

\newpage

\begin{minipage}[t]{0.45\linewidth}
\vspace{0pt}
\begin{center}
\begin{tikzpicture}
\node{$\encircle{1}$}
	child[missing]
	child{node{$\encircle{2}-$}
		child{node{$\encircle{3}-$}
			child{node{$\encircle{4}+$}
				child{node{$\encircle{5}+$}
					child[missing]
					child{node{$\encircle{6}-$}
						child{node{$\encircle{7}+$}
							child[missing]
							child{node{$\encircle{\;}-$}
								child{node{$\encircle{\;}+$}}
								child[missing]
							}
						}
						child[missing]
					}
				}
				child[missing]
			}
			child[missing]
		}
		child{node{$\encircle{\;}-$}
			child{node{$\encircle{\;}+$}
				child[missing]
				child{node{$\encircle{\;}+$}
					child{node{$\encircle{\;}-$}}
					child[missing]
				}
			}
			child{node{$\encircle{\;}+$}}
		}
	};
\end{tikzpicture}
\end{center}
\end{minipage}
\begin{minipage}[t]{0.55\linewidth}
\vspace{0pt}
Then we add this entire left branch to the queue, putting the $+$ nodes before the $-$ nodes.  We also pop $4+$ and $5+$ from the (left of the) queue, since we have already dealt with them.  The queue now reads
$$\text{Queue:  }  2-, 3-, 7+, 6-$$
Now we come to the next node in the queue (reading from the left) which is $2-$.  The node $2$ does have a right child.  We label it as $8$ (the next available number) and completely enumerate the left branch that starts with $8$, which means labeling $8-$, $9+$ as shown.  
\end{minipage}

\vspace{-2in}

\begin{minipage}[b]{0.55\linewidth}
From the queue, we pop $2$ and add the $8-$, $9+$ left branch, but first all $+$ nodes and then all $-$ nodes:
$$\text{Queue:  }  3-, 7+, 6-, 9+, 8-$$
Since $3-$ does not have a right child, we pop it and proceed to $7+$, which does have a right child, which is labeled with $10$, and the left branch starting at $10$ is enumerated as $10-, 11+$, as shown
\end{minipage}%
\begin{minipage}[b]{0.45\linewidth}
\begin{center}
\begin{tikzpicture}
\node{$\encircle{1}$}
	child[missing]
	child{node{$\encircle{2}-$}
		child{node{$\encircle{3}-$}
			child{node{$\encircle{4}+$}
				child{node{$\encircle{5}+$}
					child[missing]
					child{node{$\encircle{6}-$}
						child{node{$\encircle{7}+$}
							child[missing]
							child{node{$\encircle{\;}-$}
								child{node{$\encircle{\;}+$}}
								child[missing]
							}
						}
						child[missing]
					}
				}
				child[missing]
			}
			child[missing]
		}
		child{node{$\encircle{8}-$}
			child{node{$\encircle{9}+$}
				child[missing]
				child{node{$\encircle{\;}+$}
					child{node{$\encircle{\;}-$}}
					child[missing]
				}
			}
			child{node{$\encircle{\;}+$}}
		}
	};
\end{tikzpicture}
\end{center}
\end{minipage}

\newpage

\begin{minipage}{0.40\linewidth}
\begin{tikzpicture}
\node{$\encircle{1}$}
	child[missing]
	child{node{$\encircle{2}-$}
		child{node{$\encircle{3}-$}
			child{node{$\encircle{4}+$}
				child{node{$\encircle{5}+$}
					child[missing]
					child{node{$\encircle{6}-$}
						child{node{$\encircle{7}+$}
							child[missing]
							child{node{$\encircle{10}-$}
								child{node{$\encircle{11}+$}}
								child[missing]
							}
						}
						child[missing]
					}
				}
				child[missing]
			}
			child[missing]
		}
		child{node{$\encircle{8}-$}
			child{node{$\encircle{9}+$}
				child[missing]
				child{node{$\encircle{12}+$}
					child{node{$\encircle{13}-$}}
					child[missing]
				}
			}
			child{node{$\encircle{14}+$}}
		}
	};
\end{tikzpicture}
\end{minipage}
\begin{minipage}{0.60\linewidth}
The queue is updated:
$$\text{Queue:  }   6-, 9+, 8-, 11+, 10-$$
By now the procedure is probably clear, so we will jump to the fully enumerated tree.

\end{minipage}

\bigskip

Here is the general algorithm. Recall that a queue is a data structure where
elements are added on the right and removed (dequeued) on the left.

\begin{algorithm}
\label{alg:tamed enumeration}Start with a \emph{queue} that at first
contains only $1$, and start with a \emph{next available label} $j=2$.

\begin{enumerate}
\item Dequeue the leftmost entry $\ell$ of the queue. (If the queue is
empty, stop). On the tree, pass to the right child of $\ell$, and enumerate
its left branch starting with the next available label $j, j+1, \ldots, j+q$%
. If there is no right child of $\ell$, return to the beginning of step (1).

\item Take the left branch enumerated in (1) and first list all $+$ nodes in
order from $j, \ldots, j+q$ and add them to the right side of the queue, and
then list in order all $-$ nodes from $j, \ldots, j+q$ and add them to the
right side of the queue

\item Set the next available label to be $j+q+1$, and return to step (1).
\end{enumerate}
\end{algorithm}

\newpage

\subsubsection{Reduce to Tamed Forms via Signed KM Board Game\label%
{Sec:Wild3}}

We will now explain how to execute a sequence of signed KM acceptable moves
that will bring the example tree from the previous section, with some other
enumeration, into the tamed form.

\bigskip

\begin{minipage}{0.35\linewidth}
\begin{tikzpicture}
\node{$\encircle{1}$}
	child[missing]
	child{node{$\encircle{2}-$}
		child{node{$\encircle{3}-$}
			child{node{$\encircle{4}+$}
				child{node{$\encircle{7}+$}
					child[missing]
					child{node{$\encircle{9}-$}
						child{node{$\encircle{11}+$}
							child[missing]
							child{node{$\encircle{13}-$}
								child{node{$\encircle{14}+$}}
								child[missing]
							}
						}
						child[missing]
					}
				}
				child[missing]
			}
			child[missing]
		}
		child{node{$\encircle{5}-$}
			child{node{$\encircle{6}+$}
				child[missing]
				child{node{$\encircle{8}+$}
					child{node{$\encircle{10}-$}}
					child[missing]
				}
			}
			child{node{$\encircle{12}+$}}
		}
	};
\end{tikzpicture}
\end{minipage}%
\begin{minipage}{0.65\linewidth}
We are going to start with the enumeration at left, which is not tamed, and explain how to execute KM acceptable moves in order to convert this tree into tamed form.  Of course, this is quite similar to what Klainerman-Machedon described, with just a modification to prioritize plusses over minuses.

\bigskip

This tree corresponds to the following $\mu$ and $\sgn$ functions

\begin{center}
\begin{tabular}{ c | c c c c c c c c c c c c c c}
$j$ & $2$ & $3$ & $4$ & $5$ & $6$ & $7$ & $8$ & $9$ & $10$ & $11$ & $12$ & $13$ & $14$\\
\hline
$\sgn(j)$ & $-$ & $-$ & $+$ & $-$ & $+$ & $+$ & $+$ & $-$ & $-$ & $+$ & $+$ & $-$  & $+$\\
$\mu(j)$ & 1 & 1 & 1 & 2 & 2 & 1 & 6 & 7 & 6 & 7 & 5 & 11 & 11
\end{tabular}
\medskip
\end{center}

We will keep a queue that right now includes only the node $1$
$$\text{Queue: }1$$
Following the queue, we move all nodes (all $j$) for which $\mu(j)=1$ all the way to left using KM moves.  Since $\mu(7)=1$ although $\mu(5)=2$ and $\mu(6)=2$, we apply the following KM moves $\text{KM}(6,7)$ and then $\text{KM}(5,6)$.

\end{minipage}

\bigskip

\noindent The $\text{KM}(6,7)$ move is 
\begin{align*}
&\mu \mapsto (6,7) \circ \mu \circ (6,7) \\
&\func{sgn} \mapsto \func{sgn} \circ (6,7)
\end{align*}
The $\text{KM}(5,6)$ move is 
\begin{align*}
&\mu \mapsto (5,6) \circ \mu \circ (5,6) \\
&\func{sgn} \mapsto \func{sgn} \circ (5,6)
\end{align*}
and together these result in the following:

\newpage

\begin{minipage}{0.35\linewidth}
\begin{tikzpicture}
\node{$\encircle{1}$}
	child[missing]
	child{node{$\encircle{2}-$}
		child{node{$\encircle{3}-$}
			child{node{$\encircle{4}+$}
				child{node{$\encircle{5}+$}
					child[missing]
					child{node{$\encircle{9}-$}
						child{node{$\encircle{11}+$}
							child[missing]
							child{node{$\encircle{13}-$}
								child{node{$\encircle{14}+$}}
								child[missing]
							}
						}
						child[missing]
					}
				}
				child[missing]
			}
			child[missing]
		}
		child{node{$\encircle{6}-$}
			child{node{$\encircle{7}+$}
				child[missing]
				child{node{$\encircle{8}+$}
					child{node{$\encircle{10}-$}}
					child[missing]
				}
			}
			child{node{$\encircle{12}+$}}
		}
	};
\end{tikzpicture}
\end{minipage}
\begin{minipage}{0.65\linewidth}

\begin{center}
\begin{tabular}{ c | c c c c c c c c c c c c c c}
$j$ & $2$ & $3$ & $4$ & $5$ & $6$ & $7$ & $8$ & $9$ & $10$ & $11$ & $12$ & $13$ & $14$\\
\hline
$\sgn(j)$ & $-$ & $-$ & $+$ & $+$ & $-$ & $+$ & $+$ & $-$ & $-$ & $+$ & $+$ & $-$  & $+$\\
$\mu(j)$ & 1 & 1 & 1 & 1 & 2 & 2 & 7 & 5 & 7 & 5 & 6 & 11 & 11
\end{tabular}
\end{center}

\bigskip

These two moves have been implemented on the revised graph at left.    \\

Inspecting the $\mu$ chart above, we see all output $1$'s have been moved to the left, and the complete list of $j$ for which $\mu(j)=1$ is $2-, 3-, 4+, 5+$.  We add these numbers to our queue, but first add all plusses and then all minuses:
$$\text{Queue: }\text{\sout{1}, 4, 5, 2, 3}$$
Since we have completed $1$ on the queue, we next move to $4$, but there are no $j$ for which $\mu(j)=4$, so we proceed to $5$.  
As we can see from the $\mu$ table or from the tree, $\mu(9)=5$ and $\mu(11)=5$, so we execute KM moves to bring these all the way to the left (but to the right of the $1$'s):

\end{minipage}

\bigskip

The next step is therefore to implement moves $\text{KM}(8,9)$, $\text{KM}%
(7,8)$, $\text{KM}(6,7)$, which brings the $\mu$ table to

\bigskip

\begin{center}
\begin{tabular}{c|cccccccccccccc}
$j$ & $2$ & $3$ & $4$ & $5$ & $6$ & $7$ & $8$ & $9$ & $10$ & $11$ & $12$ & $%
13$ & $14$ &  \\ \hline
$\func{sgn}(j)$ & $-$ & $-$ & $+$ & $+$ & $-$ & $-$ & $+$ & $+$ & $-$ & $+$
& $+$ & $-$ & $+$ &  \\ 
$\mu(j)$ & 1 & 1 & 1 & 1 & 5 & 2 & 2 & 8 & 8 & 5 & 7 & 11 & 11 & 
\end{tabular}
\end{center}

\bigskip

This is followed by the moves $\text{KM}(11,10)$, $\text{KM}(10,9)$, $\text{%
KM}(9,8)$, $\text{KM}(8,7)$, which bring the $\mu$ table to

\bigskip

\begin{center}
\begin{tabular}{c|cccccccccccccc}
$j$ & $2$ & $3$ & $4$ & $5$ & $6$ & $7$ & $8$ & $9$ & $10$ & $11$ & $12$ & $%
13$ & $14$ &  \\ \hline
$\func{sgn}(j)$ & $-$ & $-$ & $+$ & $+$ & $-$ & $+$ & $-$ & $+$ & $+$ & $-$
& $+$ & $-$ & $+$ &  \\ 
$\mu(j)$ & 1 & 1 & 1 & 1 & 5 & 5 & 2 & 2 & 9 & 9 & 8 & 7 & 7 & 
\end{tabular}
\end{center}

\newpage

\begin{minipage}{0.35\linewidth}
\begin{tikzpicture}
\node{$\encircle{1}$}
	child[missing]
	child{node{$\encircle{2}-$}
		child{node{$\encircle{3}-$}
			child{node{$\encircle{4}+$}
				child{node{$\encircle{5}+$}
					child[missing]
					child{node{$\encircle{6}-$}
						child{node{$\encircle{7}+$}
							child[missing]
							child{node{$\encircle{13}-$}
								child{node{$\encircle{14}+$}}
								child[missing]
							}
						}
						child[missing]
					}
				}
				child[missing]
			}
			child[missing]
		}
		child{node{$\encircle{8}-$}
			child{node{$\encircle{9}+$}
				child[missing]
				child{node{$\encircle{10}+$}
					child{node{$\encircle{11}-$}}
					child[missing]
				}
			}
			child{node{$\encircle{12}+$}}
		}
	};
\end{tikzpicture}
\end{minipage}
\begin{minipage}{0.65\linewidth}
At this point, the tree takes the form as pictured to the left.  All $5$'s have been moved to their proper position in the $\mu$ table.   The complete list of $j$ for which $\mu(j)=5$ is $6-, 7-$, so  we add these numbers to the queue but first add the plusses, then the minuses:
$$\text{Queue: }\text{\sout{1, 4, 5}, 2, 3, 7, 6}$$
Since we have addressed $5$ on the queue, we move to the next one, which is $2$.  This means we have to move all $j$ for which $\mu(j)=2$ all the way to the left (just to the right of $5$).  Examining the $\mu$ table, we see that these $j$ are already in place, at positions $8-, 9+$.  So no KM moves are needed, and we add to the queue
$$\text{Queue: }\text{\sout{1, 4, 5, 2}, 3, 7, 6, 9, 8}$$
Next on the queue is $3$, but there are no $j$ for which $\mu(j)=3$, so we proceed to $7$ on the queue.  From the $\mu$ table or the tree, we see there are two $j$ for which $\mu(j)=7$, namely $13$ and $14$.    We therefore execute KM moves to bring these to the left in the $\mu$ table, just to the right of $2$.  

\end{minipage}

\bigskip

Specifically, we do $\text{KM}(12,13)$, $\text{KM}(11,12)$ and $\text{KM}%
(10,11)$, which brings us to this $\mu$ table

\begin{center}
\begin{tabular}{c|cccccccccccccc}
$j$ & $2$ & $3$ & $4$ & $5$ & $6$ & $7$ & $8$ & $9$ & $10$ & $11$ & $12$ & $%
13$ & $14$ &  \\ \hline
$\func{sgn}(j)$ & $-$ & $-$ & $+$ & $+$ & $-$ & $+$ & $-$ & $+$ & $-$ & $+$
& $-$ & $+$ & $+$ &  \\ 
$\mu(j)$ & 1 & 1 & 1 & 1 & 5 & 5 & 2 & 2 & 7 & 9 & 9 & 8 & 7 & 
\end{tabular}
\end{center}

After that, we do $\text{KM}(13,14)$, $\text{KM}(12,13)$ and $\text{KM}%
(11,12)$, which brings us to this $\mu$ table

\begin{center}
\begin{tabular}{c|cccccccccccccc}
$j$ & $2$ & $3$ & $4$ & $5$ & $6$ & $7$ & $8$ & $9$ & $10$ & $11$ & $12$ & $%
13$ & $14$ &  \\ \hline
$\func{sgn}(j)$ & $-$ & $-$ & $+$ & $+$ & $-$ & $+$ & $-$ & $+$ & $-$ & $+$
& $+$ & $-$ & $+$ &  \\ 
$\mu(j)$ & 1 & 1 & 1 & 1 & 5 & 5 & 2 & 2 & 7 & 7 & 9 & 9 & 8 & 
\end{tabular}
\end{center}

Now that the $7$ outputs are in place, we take the set of $j$ for which $%
\mu(j) = 7$, which is $10-, 11+$, and put them in the queue with plusses
first followed by minuses: 
\begin{equation*}
\text{Queue: }\text{\sout{1, 4, 5, 2, 3, 7}, 6, 9, 8, 11, 10}
\end{equation*}
There are no $j$ for which $\mu(j)=6$, so we proceed on the queue to $9$.
However, the two $9$'s are already in place, and the next on the list is $8$%
, and the one $8$ is already in place. So this completes the example.

\bigskip

We now describe the above algorithm in general.

\begin{algorithm}
Given $(\mu,\func{sgn})$, start with a queue $Q$ that initially contains $1$%
, and a \emph{marker} $j$, which is initially set to $j=2$. Repeat the
following steps:

\begin{enumerate}
\item Dequeue the leftmost entry $\ell$ of the queue. If the queue is empty,
then stop. Clear the temporary ordered list $L$.

\item If $\mu(j)=\ell$, add $j$ to the right of $L$, then increment the
marker $j$ by $1$ (so now $j$ is the old $j+1$). If (the new marker) $j$ is
out of range, jump to Step (4). If $\mu(j)\neq \ell$, then proceed to Step
(3); otherwise repeat Step (2).

\item Find the smallest $r\geq j+1$ such that $\mu(r) = \ell$ (if there is
no such $r$, jump to Step (4)). Execute signed KM moves $\text{KM}(r-1,r)$,
followed by $\text{KM}(r-2,r-1)$, $\ldots$, until $\text{KM}(j+1,j)$. Now $%
\mu(j)=\ell$. Return to Step (2).

\item Take all elements of the temporary ordered list $L$, read all $+$
entries in order (from left to right) and add them to the (right end of the)
queue $Q$, then read all $-$ entries in order (from left to right) and add
them to the (right end of the) queue $Q$. Return to Step (1).
\end{enumerate}
\end{algorithm}

\bigskip

We have the following adaptation of Proposition \ref%
{Prop:KMIntegrationLimits}, revised to include sign maps and to reference
tamed form in place of upper echelon form.

\begin{proposition}
Within a signed KM-relatable equivalence class of collapsing map/sign map
pairs $(\mu ,\func{sgn})$, there is a unique tamed $(\mu _{\ast },\func{sgn}%
_{\ast })$. Moreover, 
\begin{equation}
\sum_{(\mu ,\func{sgn})\sim (\mu _{\ast },\func{sgn}_{\ast })}I(\mu ,\func{id%
},\func{sgn},\gamma ^{(k+1)})=\int_{T_{D}(\mu _{\ast })}J_{\mu _{\ast },%
\func{sgn}_{\ast }}(\gamma ^{(k+1)})(t_{1},\underline{t}_{k+1})\,d\underline{%
t}_{k+1}  \label{E:equiv-reduce-tamed}
\end{equation}%
where $T_{D}(\mu _{\ast })$ is defined in \eqref{E:TD-def}.
\end{proposition}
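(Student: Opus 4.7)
The plan is to prove the two assertions in parallel with the structure of Proposition~\ref{Prop:KMIntegrationLimits}, but working at the level of signed skeleton trees rather than unsigned ones. Two pairs $(\mu, \sgn)$ and $(\mu', \sgn')$ lie in the same signed KM-relatable class precisely when they correspond to the same signed skeleton tree, by the graphical criterion established in \S\ref{Sec:Wild1}. Algorithm~\ref{alg:tamed enumeration} is a deterministic procedure that takes only the signed skeleton as input and returns a unique enumeration; the four conditions in Definition~\ref{def:tamedF} are exactly the invariants certifying that an enumeration agrees with the algorithm's output. This immediately gives existence and uniqueness of the tamed representative $(\mu_*, \sgn_*)$ within each equivalence class.

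Next I would verify that every $(\mu, \sgn)$ in the class can actually be reduced to $(\mu_*, \sgn_*)$ by a sequence of signed KM acceptable moves. This is the job of the tier-by-tier reduction described just after Algorithm~\ref{alg:tamed enumeration}: maintain a queue of already-placed labels in the tamed order, and for each dequeued label $\ell$ pull the set $\{j : \mu(j) = \ell\}$ into its tamed slots via adjacent transpositions $\mathrm{KM}(r-1,r), \mathrm{KM}(r-2,r-1), \ldots, \mathrm{KM}(j+1,j)$. At each such step the two indices being swapped lie in different left branches of the current tree, so the hypotheses $\mu(j) \neq \mu(j+1)$ and $\mu(j+1) < j$ needed for acceptability are met. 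Combined with the uniqueness in the previous paragraph, this shows that $\rho \mapsto (\rho \circ \mu_* \circ \rho^{-1}, \sgn_* \circ \rho^{-1})$ is a bijection between a set $\Sigma$ of permutations of $\{2,\ldots,k+1\}$ and the equivalence class of $(\mu_*, \sgn_*)$.

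For the integral identity, applying \eqref{E:KM-den2b} to each $(\mu, \sgn)$ represented by $\rho \in \Sigma$ gives
\begin{equation*}
I(\mu, \mathrm{id}, \sgn, \gamma^{(k+1)}) = \int_{t_1 \geq t_{\rho(2)} \geq \cdots \geq t_{\rho(k+1)}} J_{\mu_*, \sgn_*}^{(k+1)}(\gamma^{(k+1)})(t_1, \underline{t}_{k+1})\, d\underline{t}_{k+1}.
\end{equation*}
Summing over the equivalence class, the left-hand side of \eqref{E:equiv-reduce-tamed} becomes the integral of a single common integrand $J_{\mu_*, \sgn_*}^{(k+1)}$ over the union $\bigcup_{\rho \in \Sigma} \{t_1 \geq t_{\rho(2)} \geq \cdots \geq t_{\rho(k+1)}\}$. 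Exactly as in the proof of Proposition~\ref{Prop:KMIntegrationLimits}, the remaining combinatorial task is to identify this union with $T_D(\mu_*)$: the inclusion $\subseteq$ is immediate since every $\rho \in \Sigma$ respects the parent-child inequalities encoded in the tamed tree, and for the reverse inclusion one checks that any ordering of $\{t_2, \ldots, t_{k+1}\}$ obeying the parent-child inequalities of $\mu_*$ determines an admissible relabeling of the tamed tree, hence corresponds to some $\rho \in \Sigma$.

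The main obstacle I anticipate is this last identification, namely confirming that the set $\Sigma$ obtained via signed KM reductions is exactly the set of permutations preserving admissibility of the underlying tree. The argument parallels Klainerman--Machedon's original reduction but must be re-examined in the signed setting, since the tamed enumeration bakes in a plus-before-minus priority that the upper-echelon enumeration does not. The key observation is that the sign map $\sgn$ enters acceptability only through the transported sign $\sgn \circ (j,j+1)$ and never through the hypothesis $\mu(j) \neq \mu(j+1)$, so signs never obstruct a move that is admissible in the unsigned sense; they merely dictate which element of $\Sigma$ we land on. Consequently $\Sigma$ has the same cardinality and generates the same union of simplices as in the unsigned case, completing the identification with $T_D(\mu_*)$.
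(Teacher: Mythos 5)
Your proposal is correct and takes essentially the same route as the paper: uniqueness of the tamed representative via the deterministic enumeration of the signed skeleton, reduction of every class member by the queue-based signed KM acceptable moves (whose acceptability is unaffected by the signs), and identification of the union of permuted simplices with $T_{D}(\mu _{\ast })$ exactly as in the proof of Proposition \ref{Prop:KMIntegrationLimits}. The paper itself presents the statement as precisely such an adaptation, so there is nothing to add.
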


To proceed with our program, we divide the expansion %
\eqref{E:gamma-expansion} into sums over signed KM-relatable equivalence
class, and apply \eqref{E:equiv-reduce-tamed} for the sum over each
equivalence class. Thus we obtain 
\begin{equation}
\gamma ^{(1)}(t_{1})=\sum_{(\mu _{\ast },\func{sgn}_{\ast })\text{ tamed}%
}\int_{T_{D}(\mu _{\ast })}J_{\mu _{\ast },\func{sgn}_{\ast }}(\gamma
^{(k+1)})(t_{1},\underline{t}_{k+1})\,d\underline{t}_{k+1}
\label{E:gamma-expansion2}
\end{equation}

The next step will be to round up the tamed pairs $(\mu _{\ast },\func{sgn}%
_{\ast })$ via \emph{wild moves}, as defined and discussed in the next
section. This will produce a further reduction of \eqref{E:gamma-expansion2}.

\subsection{Wild Moves\label{Sec:Wild4}}

\begin{definition}
\label{D:wild} A \emph{wild move} $\text{W}(\rho)$ is defined as follows.
Suppose $(\mu,\func{sgn})$ is a collapsing operator/sign map pair in tamed
form, and $\{\ell, \ldots, r\}$ is a full left branch, i.e. 
\begin{equation*}
z \overset{\mathrm{def}}{=} \mu(\ell)=\mu(\ell+1)=\cdots = \mu(r)
\end{equation*}
but $\mu(\ell-1)\neq z$ (or is undefined) and $\mu(r+1) \neq z$ (or is
undefined).

Let $\rho$ be a permutation of $\{\ell,\ell+1, \ldots, r \}$ that satisfies
the following condition: if $\ell\leq q<s\leq r$ and $\func{sgn}(q)=\func{sgn%
}(s)$, then $q$ appears before $s$ in the list $(\rho^{-1}(\ell), \ldots,
\rho^{-1}(r))$ (equivalently, $\rho(q)<\rho(s)$)

Then the wild move $\text{W}(\rho)$ is defined as an action on a triple $%
(\mu,\sigma, \func{sgn})$, where 
\begin{equation*}
(\mu^{\prime },\sigma^{\prime },\func{sgn}^{\prime }) = W(\rho)(\mu,\sigma,%
\func{sgn})
\end{equation*}
provided 
\begin{equation*}
\begin{aligned} &\mu' = \rho \circ \mu = \rho \circ \mu \circ \rho^{-1}\\
&\sigma' = \rho \circ \sigma \\ &\operatorname{sgn}' = \operatorname{sgn}
\circ \rho^{-1} \\ \end{aligned}
\end{equation*}
We note that $W$ is an action 
\begin{equation*}
W(\rho_1)W(\rho_2) = W(\rho_1 \circ\rho_2)
\end{equation*}
\end{definition}

It is fairly straightforward to show, using the definition of tamed form,
the following. It is important to note that the analogous statement for
upper echelon forms does not hold, and it is the purpose of introducing the
tamed class.

\begin{proposition}
\label{P:tamed-to-tamed} Suppose $(\mu,\func{sgn})$ is a collapsing
operator/sign map pair in tamed form, and $W(\rho)$ is a wild move as
defined above. Letting $(\mu^{\prime }, \func{sgn}^{\prime })$ be the
output, i.e. 
\begin{equation*}
(\mu^{\prime },\sigma^{\prime }, \func{sgn}^{\prime }) = W(\rho)
(\mu,\sigma, \func{sgn})
\end{equation*}
then $(\mu^{\prime },\func{sgn}^{\prime })$ is also tamed.
\end{proposition}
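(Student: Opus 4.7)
The plan is to verify each of the four conditions of Definition~\ref{def:tamedF} for $(\mu',\func{sgn}')$ by reducing them, case by case, to the corresponding condition for the original tamed pair $(\mu,\func{sgn})$. The key point is that a wild move only permutes labels inside one full left branch $\{\ell_{\ast},\ldots,r_{\ast}\}$, while the underlying tree structure and the signs of the actual nodes are left intact.

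First I would record three structural invariants by direct computation from $\mu' = \rho\circ\mu\circ\rho^{-1}$ and $\func{sgn}' = \func{sgn}\circ\rho^{-1}$, using that $\rho$ fixes every label outside $\{\ell_{\ast},\ldots,r_{\ast}\}$ and in particular fixes the common parent $z$:
\begin{enumerate}
\item[(i)] \emph{Tier preservation:} $t'(j)=t(j)$ for every $j$, since the branch labels share the tier $t(z)+1$ and the depth of a label in the $\mu$-graph equals its tier.
\item[(ii)] \emph{Parent-sign preservation:} $\func{sgn}'(\mu'(j))=\func{sgn}(\mu(j))$ for every $j$, encoding that signs travel with nodes rather than with labels.
\item[(iii)] \emph{Grandparent-equality preservation:} ${\mu'}^{2}(j_{1})={\mu'}^{2}(j_{2})$ if and only if $\mu^{2}(j_{1})=\mu^{2}(j_{2})$, by a short case analysis on whether $\mu(j_{i})$ lies in the branch.
\end{enumerate}
In addition, the order-preserving constraint in Definition~\ref{D:wild} gives that for $a,b\in\{\ell_{\ast},\ldots,r_{\ast}\}$ with $\func{sgn}(a)=\func{sgn}(b)$ one has $\rho(a)<\rho(b)\Longleftrightarrow a<b$.

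With these in hand, conditions~(1), (2), (3) of Definition~\ref{def:tamedF} are short. Condition~(1) follows immediately from (i), since child labels are unchanged. For conditions~(2) and (3), write $a=\mu(\ell)$, $b=\mu(r)$ and translate all hypotheses to the original labeling via (ii) and (iii). For (2), the same-sign hypothesis together with the order-preserving property of $\rho$ converts $\mu'(\ell)<\mu'(r)$ into $a<b$, and tameness of $(\mu,\func{sgn})$ then yields $\ell<r$. For (3), the opposite-sign hypothesis is identical in the two labelings, so the conclusion passes through directly.

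The delicate step is condition~(4). Here I would first establish, by induction along Algorithm~\ref{alg:tamed enumeration}, a tier-interval lemma: \emph{in any tamed labeling the labels of a fixed tier form a consecutive interval, and each full left branch occupies a consecutive sub-interval of its tier.} This follows from the FIFO property of the queue, which processes generations in order. Granting this, I would split condition~(4) by whether $\ell,r$ and their parents $a=\mu(\ell),b=\mu(r)$ lie inside $\{\ell_{\ast},\ldots,r_{\ast}\}$. The both-parents-in-branch subcase is excluded by ${\mu'}^{2}(\ell)\ne{\mu'}^{2}(r)$, and the both-parents-out-of-branch subcase is immediate because $\rho$ fixes $a,b$. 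In the two mixed subcases, where exactly one of $a,b$ lies in the branch, the tier-interval lemma combined with $\mu'(\ell)<\mu'(r)$ pins the outside parent to lie strictly above $r_{\ast}$ (or strictly below $\ell_{\ast}$), which forces the original inequality $a<b$, after which condition~(4) for $(\mu,\func{sgn})$ supplies $\ell<r$. I expect the tier-interval lemma to be the only genuinely combinatorial obstacle; everything else is mechanical bookkeeping with the invariants.
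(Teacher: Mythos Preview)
Your approach is correct and constitutes a complete proof, whereas the paper simply asserts that the proposition is ``fairly straightforward to show, using the definition of tamed form'' and gives no argument. Your three invariants (tier, parent-sign, grandparent-equality) are the right bookkeeping, and your case split for conditions (1)--(4) goes through.

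One simplification: the tier-interval lemma you propose for condition~(4) is stronger than what is actually needed. All you use in the mixed subcases is that the wild-move branch itself is the consecutive block $\{\ell_\ast,\ldots,r_\ast\}$, and this is already built into Definition~\ref{D:wild}. Concretely, if $a=\mu(\ell)$ lies in the branch and $b=\mu(r)$ does not, then $\mu'(\ell)=\rho(a)\in[\ell_\ast,r_\ast]$ while $\mu'(r)=b\notin[\ell_\ast,r_\ast]$; the hypothesis $\mu'(\ell)<\mu'(r)$ forces $b>r_\ast\geq a$, hence $\mu(\ell)<\mu(r)$, and condition~(4) for the original pair finishes. (The sub-subcase where the outside parent equals $z$ is impossible since $z<\ell_\ast\leq\rho(a)$.) So you can drop the induction along Algorithm~\ref{alg:tamed enumeration} entirely and the proof becomes purely mechanical, matching the paper's ``straightforward'' claim.
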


Thus wild moves preserve the tamed class, and we can say that two tamed
forms $(\mu,\func{sgn})$ and $(\mu^{\prime },\func{sgn}^{\prime })$ are 
\emph{wildly relatable} if there exists $\rho$ as in Definition \ref{D:wild}
such that 
\begin{equation*}
(\mu^{\prime },\sigma^{\prime }, \func{sgn}^{\prime }) = W(\rho)(\mu,\sigma, 
\func{sgn})
\end{equation*}
This is an equivalence relation, and in the sum \eqref{E:gamma-expansion2},
we can partition the class of tamed pairs $(\mu,\func{sgn})$ into
equivalence classes of wildly relatable forms (we pursue this in the next
section).

The main result of this section is

\begin{proposition}
\label{P:wild-preserves} Suppose that $\rho$ is as in Definition \ref{D:wild}
and 
\begin{equation*}
(\mu^{\prime }, \sigma^{\prime }, \func{sgn}^{\prime }) = W(\rho)(\mu,
\sigma, \func{sgn})
\end{equation*}
Then for any symmetric density $f^{(k+1)}$, 
\begin{equation*}
J_{\mu^{\prime },\func{sgn}^{\prime }}( f^{(k+1)})(t_1, {\sigma^{\prime }}%
^{-1}(\underline{t}_{k+1})) = J_{\mu,\func{sgn}}( f^{(k+1)})(t_1,
\sigma^{-1}(\underline{t}_{k+1}))
\end{equation*}
Consequently, the Duhamel integrals are preserved, after adjusting for the
time permutations 
\begin{equation*}
\int_{\sigma^{\prime }[T_D(\mu^{\prime })]} J_{\mu^{\prime },\func{sgn}%
^{\prime }}( \gamma^{(k+1)})(t_1, \underline{t}_{k+1}) \, d\underline{t}%
_{k+1} = \int_{\sigma[T_D(\mu)]} J_{\mu,\func{sgn}}( \gamma^{(k+1)})(t_1, 
\underline{t}_{k+1}) \, d\underline{t}_{k+1}
\end{equation*}
where $\sigma[T_D(\mu)]$ is defined by modifying \eqref{E:TD-def} so that
nodes labels are pushed forward by $\sigma$: 
\begin{equation*}
\begin{aligned} \sigma[T_D(\mu)] = \{\; t_{\sigma(j)}\geqslant t_{\sigma(k)}
\; : \; & j,k\text{ are labels on nodes of }\alpha \text{ such}\\
&\text{that the }k\text{ node is a child of the }j\text{ node} \; \}
\end{aligned}
\end{equation*}
\end{proposition}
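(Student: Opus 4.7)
The plan is to reduce $\rho$ to a product of \emph{elementary wild moves}—adjacent transpositions $(j,j+1)$ with $j,j+1$ both in the left branch and $\func{sgn}(j)\neq\func{sgn}(j+1)$—and then iterate, using that $W$ is a group action under the composition $W(\rho_1)W(\rho_2)=W(\rho_1\rho_2)$ (Definition \ref{D:wild}). Since $\rho$ is required to preserve the relative order within each sign class, a bubble-sort argument shows that any admissible $\rho$ can be written as such a composition in which every intermediate configuration still satisfies the wild-move constraint. Combined with Proposition \ref{P:tamed-to-tamed}, which keeps us inside the tamed class at every intermediate step, this reduces the proof to the elementary case.

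For an elementary swap, the crucial structural feature (ensured by Algorithm \ref{alg:tamed enumeration}) is that in tamed form the left branch $\{\ell,\ldots,r\}$ is enumerated consecutively, so no operator from outside this block intervenes between $B_{z,j}^{\func{sgn}(j)}$ and $B_{z,j+1}^{\func{sgn}(j+1)}$ in the $J$-expression. Hence only the local block
\begin{equation*}
U^{(j-1)}(t_{j-1}-t_j)\,B_{z,j}^{\func{sgn}(j)}\,U^{(j)}(t_j-t_{j+1})\,B_{z,j+1}^{\func{sgn}(j+1)}
\end{equation*}
together with the input $f^{(k+1)}$ is affected. I would verify directly by kernel computation that, with $\func{sgn}(j)=+$ and $\func{sgn}(j+1)=-$ (the case $(-,+)$ being symmetric), swapping the two signs and the two times gives the same operator. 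The mechanism is that $B^+_{z,\cdot}$ sets $x_\cdot,x_\cdot'$ both equal to $x_z$ while $B^-_{z,\cdot}$ sets them equal to $x_z'$, so $B_{z,j}^+$ and $B_{z,j+1}^-$ act on entirely disjoint particle variables; combined with the particle symmetry of $f^{(k+1)}$ under the transposition $j\leftrightarrow j+1$, and with the fact that $U^{(m)}$ and any $B_{i,\ell}$ with $i,\ell\notin\{j,j+1\}$ commute with the particle swap $P_{(j,j+1)}$, the identity becomes a bookkeeping statement about relabeling particle indices. The constraint that same-sign elements preserve order is exactly what forbids the same-sign elementary swap, which is essential because $B^+_{z,j}$ and $B^+_{z,j+1}$ act on overlapping sides and their order cannot be exchanged without the corresponding time swap producing a genuinely different kernel.

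For the second (Duhamel integral) statement, I would perform the change of variables $\underline{t}\mapsto\rho(\underline{t})$ in the left-hand integral (Jacobian $=1$). Using $\sigma'=\rho\sigma$ one checks that this substitution carries $\sigma'[T_D(\mu')]$ onto the set defined by the inequalities $t_{\sigma(j)}\geqslant t_{\sigma(k)}$ for parent–child pairs $(j,k)$ in the tree of $\mu'$. The integrand identity from the first part, together with the fact that the admissible tree of $\mu'$ has the same underlying \emph{set} of parent–child inequalities as the tree of $\mu$ after the relabeling encoded by $\rho$ (left-chain pairs are untouched because $\mu\equiv z$ on the whole branch, and right-child attachments transform according to $\mu'(i)=\rho(\mu(i))$), then matches the region to $\sigma[T_D(\mu)]$.

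The main obstacle is bookkeeping: the wild move reshuffles not only the signs inside the left branch but also the $\mu$-values of right children of the left branch (via $\mu'=\rho\circ\mu\circ\rho^{-1}$), so the particle-renaming argument has to be carried through the entire $J$-expression to the right of the $B_{z,\ell}\cdots B_{z,r}$ block. The saving grace is that these descendants appear later in the tamed enumeration and their $B$-operators reference only labels $j\in\{\ell,\ldots,r\}$ as \emph{parents} (never as the newly added particle), so the compatibility $\mu'(i)=\rho(\mu(i))$ is precisely what is needed for $P_{(j,j+1)}$ to commute through them, closing the induction.
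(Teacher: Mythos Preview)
Your approach to the pointwise identity matches the paper's exactly: reduce $\rho$ to a composition of adjacent opposite-sign transpositions and verify the resulting two-operator commutation directly, exploiting that $B_{z,j}^{+}$ acts only through $x_{z}$ while $B_{z,j+1}^{-}$ acts only through $x_{z}'$, together with the particle symmetry of $f^{(k+1)}$. Your handling of the tail---conjugating each descendant $B_{\mu(m),m}$ with $\mu(m)\in\{j,j+1\}$ through the particle swap via $\mu'(m)=\rho(\mu(m))$---is in fact more explicit than the paper's, which simply asserts that the local identity \eqref{E:main-wild-id} applied to a symmetric density suffices.

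For the Duhamel-integral statement the paper writes only ``Consequently'' and gives no further argument. Your change-of-variables idea is the right one, but the assertion that ``the admissible tree of $\mu'$ has the same underlying set of parent--child inequalities as the tree of $\mu$ after the relabeling encoded by $\rho$'' does not hold: wild moves change the tree skeleton (in Example \ref{EX:wild}, node $6$ is a child of $4$ in the tree of $\mu_{1}$ but a child of $3$ in the tree of $\mu_{2}$), so $T_{D}(\mu)$ and $T_{D}(\mu')$ are genuinely different regions and the substitution does not carry $\sigma'[T_{D}(\mu')]$ onto $\sigma[T_{D}(\mu)]$. What does follow cleanly from the pointwise identity is
\[
\int_{T_{D}(\mu)}J_{\mu,\operatorname{sgn}}\,d\underline{t}
=\int_{\rho[T_{D}(\mu)]}J_{\mu',\operatorname{sgn}'}\,d\underline{t}
\]
(the \emph{same} $T_{D}(\mu)$ on both sides, pushed forward by $\rho$ on the right), and it is the union over the wild class of these pushforwards that produces the reference domain $T_{R}$ actually used downstream in Proposition \ref{Prop:ref form}.
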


\begin{proof}
A permutation $\rho$ of the type described in Definition \ref{D:wild} can be
written as a composition of permutations 
\begin{equation*}
\rho = \tau_1 \circ \cdots \circ \tau_s
\end{equation*}
with the property that each $\tau = (i,i+1)$ for some $i\in \{\ell, \ldots,
\ell+r\}$ and $\func{sgn}(i) \neq \func{sgn}(i+1)$. Thus it suffices to
prove 
\begin{align*}
\hspace{0.3in}&\hspace{-0.3in} U^{(i-1)}(-t_i) B_{\mu(i),i}^- U^{(i)} (t_i
-t_{i+1}) B_{\mu(i+1),i+1}^+ U^{(i+1)}(t_{i+1}) \\
&= U^{(i-1)}(-t_{i+1}) B_{\mu(i),i}^+ U^{(i)} (t_{i+1} -t_i)
B_{\mu(i+1),i+1}^- U^{(i+1)}(t_i)
\end{align*}
when the two sides act on a symmetric density. Recall that $%
z=\mu(i)=\mu(i+1) $. Without loss, we might as well take $z=1$ and $i=2$ so
that this becomes 
\begin{equation}  \label{E:main-wild-id}
U^{(1)}(-t_2) B_{1,2}^- U^{(2)} (t_2 -t_3) B_{1,3}^+ U^{(3)}(t_3)=
U^{(1)}(-t_3) B_{1,2}^+ U^{(2)} (t_3 -t_2) B_{1,3}^- U^{(3)}(t_2)
\end{equation}
To prove \eqref{E:main-wild-id}, on the left side, we proceed as follows.
First, we'll plug in: 
\begin{equation*}
U^{(1)}(-t_2) = U^1_{-2} U^{1^{\prime }}_{2}
\end{equation*}
\begin{equation*}
U^{(2)}(t_2-t_3) = U_2^1 U_{-3}^1 U_{-2}^{1^{\prime }} U_{3}^{1^{\prime }}
U_2^2 U_{-3}^2 U_{-2}^{2^{\prime }} U_3^{2^{\prime }}
\end{equation*}
\begin{equation*}
U^{(3)}(t_3) = U_3^1 U_{-3}^{1^{\prime }} U_3^2 U_{-3}^{2^{\prime }} U_3^3
U_{-3}^{3^{\prime }}
\end{equation*}
where the subscript indicates the time variable, and the superscript
indicates the spatial variable. Then we note that for the two collapsing
operators on the left side of \eqref{E:main-wild-id}

\begin{itemize}
\item $B_{1,2}^-$ acts only on the $2$, $2^{\prime }$, and $1^{\prime }$
coordinates, so we can move all $U^1$ operators in the middle to the left

\item $B_{1,3}^+$ acts only on the $3$, $3^{\prime }$, and $1$ coordinates,
we can move all $U^2$, $U^{2^{\prime }}$, and $U^{1^{\prime }}$ operators in
the middle to the right.
\end{itemize}

This results in 
\begin{equation}  \label{E:main-wild-id-left}
\text{left side of }\eqref{E:main-wild-id} = U_{-3}^1 U_2^{1^{\prime }}
B_{1,2}^- B_{1,3}^+ U_3^1 U_{-2}^{1^{\prime }} U_2^2 U_{-2}^{2^{\prime }}
U_3^3 U_{-3}^3
\end{equation}

Similarly, on the right side of \eqref{E:main-wild-id}, plug in: 
\begin{equation*}
U^{(1)}(-t_3) = U_{-3}^1 U_3^{1^{\prime }}
\end{equation*}
\begin{equation*}
U^{(2)}(t_3-t_2) = U_3^1 U_{-2}^1 U_{-3}^{1^{\prime }} U_{2}^{1^{\prime }}
U_3^2 U_{-2}^2 U_{-3}^{2^{\prime }} U_2^{2^{\prime }}
\end{equation*}
\begin{equation*}
U^{(3)}(t_2) = U_2^1 U_{-2}^{1^{\prime }} U_2^2 U_{-2}^{2^{\prime }} U_2^3
U_{-2}^{3^{\prime }}
\end{equation*}
Then we note that for the two collapsing operators on the right side of %
\eqref{E:main-wild-id},

\begin{itemize}
\item $B_{1,2}^+$ acts only on the $2$, $2^{\prime }$, and $1$ coordinates,
so we can move all $U^{1^{\prime }}$ operators in the middle to the left

\item $B_{1,3}^-$ acts only on the $3$, $3^{\prime }$, and $1^{\prime }$
coordinates, we can move all $U^2$, $U^{2^{\prime }}$, and $U^{1}$ operators
in the middle to the right.
\end{itemize}

This results in 
\begin{equation}  \label{E:main-wild-id-right}
\text{right side of }\eqref{E:main-wild-id} = U_{-3}^1 U_2^{1^{\prime }}
B_{1,2}^+ B_{1,3}^- U_3^1 U_{-2}^{1^{\prime }} U_3^2 U_{-3}^{2^{\prime }}
U_2^3 U_{-2}^{3^{\prime }}
\end{equation}
Since \eqref{E:main-wild-id-left} and \eqref{E:main-wild-id-right} are equal
when applied to a symmetric density, this proves equality %
\eqref{E:main-wild-id}. In particular, one just needs that to permute 
\begin{equation*}
(x_2,x_2^{\prime },x_3,x_3^{\prime }) \leftrightarrow (x_3,x_3^{\prime
},x_2,x_2^{\prime })
\end{equation*}
\end{proof}

\begin{example}
\label{EX:wild} Starting from the pair $(\mu_1, \func{sgn}_1)$, defined as
follows

\bigskip

\begin{equation*}
\begin{tabular}{c|cccccc}
& $2$ & $3$ & $4$ & $5$ & $6$ & $7$ \\ \hline
$\mu _{1}$ & $1$ & $1$ & $1$ & $2$ & $4$ & $4$ \\ 
$\func{sgn}_{1}$ & $+$ & $+$ & $-$ & $-$ & $+$ & $-$%
\end{tabular}%
\end{equation*}

\bigskip

there are five nontrivial wild moves for $j = 2, \ldots, 6$:

\begin{equation*}
(\mu_j,\sigma_j, \func{sgn}_j) = W(\rho_j)(\mu_1, \text{id}, \func{sgn}_1)
\end{equation*}

as indicated in the table below

\bigskip

\renewcommand{\arraystretch}{1.2} 
\begin{equation*}
\begin{tabular}{c|ccc|cc||c|ccc|cc}
& $2$ & $3$ & $4$ & $6$ & $7$ &  & $2$ & $3$ & $4$ & $6$ & $7$ \\ \hline
$\rho _{1}$ & $2$ & $3$ & $4$ & $6$ & $7$ & $\rho _{1}^{-1}$ & $2$ & $3$ & $%
4 $ & $6$ & $7$ \\ 
$\rho _{2}$ & $2$ & $4$ & $3$ & $6$ & $7$ & $\rho _{2}^{-1}$ & $2$ & $4$ & $%
3 $ & $6$ & $7$ \\ 
$\rho _{3}$ & $3$ & $4$ & $2$ & $6$ & $7$ & $\rho _{3}^{-1}$ & $4$ & $2$ & $%
3 $ & $6$ & $7$ \\ 
$\rho _{4}$ & $2$ & $3$ & $4$ & $7$ & $6$ & $\rho _{4}^{-1}$ & $2$ & $3$ & $%
4 $ & $7$ & $6$ \\ 
$\rho _{5}$ & $2$ & $4$ & $3$ & $7$ & $6$ & $\rho _{5}^{-1}$ & $2$ & $4$ & $%
3 $ & $7$ & $6$ \\ 
$\rho _{6}$ & $3$ & $4$ & $2$ & $7$ & $6$ & $\rho _{6}^{-1}$ & $4$ & $2$ & $%
3 $ & $7$ & $6$%
\end{tabular}%
\end{equation*}

\bigskip

Notice that each $\rho_j^{-1}$ preserves the order of $2, 3$, as in
Definition \ref{D:wild} (meaning that $2$ appears before $3$ in the list $%
(\rho_j^{-1}(2), \rho_j^{-1}(3), \rho_j^{-1}(4))$), equivalently $%
\rho(2)<\rho(3)$. Thus the action of $\rho^{-1}$ on $\{2,3,4\}$ is
completely determined by where $4$ appears in the list $(\rho_j^{-1}(2),
\rho_j^{-1}(3), \rho_j^{-1}(4))$.

The corresponding trees and explicit mappings $(\mu_j,\func{sgn}_j)$ are
indicated below. We notice that all $(\mu_j,\func{sgn}_j)$ are tamed (in
accordance with Proposition \ref{P:tamed-to-tamed}) and also that the wild
moves, unlike the KM moves, do change the tree skeleton, but this change is
restricted to shuffling nodes along a left branch, subject to the
restrictions (indicated in Definition \ref{D:wild}) that the ordering of the
plus nodes and ordering of the minus nodes remain in tact.
\end{example}

\begin{minipage}[t]{0.33\linewidth}
\begin{center}
Tree for $(\mu_1,\sgn_1)$

\bigskip

\begin{tabular}{ c |  c c c c c c}
& $2$ & $3$ & $4$ & $5$ & $6$ & $7$ \\
\hline
$\mu_1$ & $1$ & $1$ & $1$ & $2$ & $4$ & $4$ \\
$\sgn_1$ & $+$ & $+$ & $-$ & $-$ & $+$ & $-$
\end{tabular}

\begin{tikzpicture}
\node{$1$}
	child[missing]
	child{node{$2+$}
		child{node{$3+$}
			child{node{$4-$}
				child[missing]
				child{node{$6+$}
					child{node{$7-$}}
					child[missing]
				}
			}
			child[missing]
		}
		child{node{$5-$}	}	
	};
\end{tikzpicture}
\end{center}
\end{minipage}
\begin{minipage}[t]{0.33\linewidth}

\begin{center}
Tree for $(\mu_2,\sgn_2)$

\bigskip

\begin{tabular}{ c |  c c c c c c}
& $2$ & $3$ & $4$ & $5$ & $6$ & $7$ \\
\hline
$\mu_2$ & $1$ & $1$ & $1$ & $2$ & $3$ & $3$ \\
$\sgn_2$ & $+$ & $-$ & $+$ & $-$ & $+$ & $-$
\end{tabular}

\bigskip

\begin{tikzpicture}
\node{$1$}
	child[missing]
	child{node{$2+$}
		child{node{$3-$}
			child{node{$4+$}}
			child{node{$6+$}
				child{node{$7-$}}
				child[missing]
			}
		}
		child{node{$5-$}}
	};
\end{tikzpicture}
\end{center}
\end{minipage}
\begin{minipage}[t]{0.33\linewidth}

\begin{center}
Tree for $(\mu_3,\sgn_3)$

\bigskip

\begin{tabular}{ c |  c c c c c c}
& $2$ & $3$ & $4$ & $5$ & $6$ & $7$ \\
\hline
$\mu_3$ & $1$ & $1$ & $1$ & $3$ & $2$ & $2$ \\
$\sgn_3$ & $-$ & $+$ & $+$ & $-$ & $+$ & $-$
\end{tabular}

\bigskip

\begin{tikzpicture}
\node{$1$}
	child[missing]
	child{node{$2-$}
		child{node{$3+$}
			child{node{$4+$}}
			child{node{$5-$}}
		}
		child[missing]
		child{node{$6+$}
			child{node{$7-$}}
			child[missing]
		}
	};
\end{tikzpicture}
\end{center}
\end{minipage}

\bigskip

\begin{minipage}[t]{0.33\linewidth}
\begin{center}
Tree for $(\mu_4,\sgn_4)$

\bigskip

\begin{tabular}{ c |  c c c c c c}
& $2$ & $3$ & $4$ & $5$ & $6$ & $7$ \\
\hline
$\mu_4$ & $1$ & $1$ & $1$ & $2$ & $4$ & $4$ \\
$\sgn_4$ & $+$ & $+$ & $-$ & $-$ & $-$ & $+$
\end{tabular}

\begin{tikzpicture}
\node{$1$}
	child[missing]
	child{node{$2+$}
		child{node{$3+$}
			child{node{$4-$}
				child[missing]
				child{node{$6-$}
					child{node{$7+$}}
					child[missing]
				}
			}
			child[missing]
		}
		child{node{$5-$}	}	
	};
\end{tikzpicture}
\end{center}
\end{minipage}
\begin{minipage}[t]{0.33\linewidth}

\begin{center}
Tree for $(\mu_5,\sgn_5)$

\bigskip

\begin{tabular}{ c |  c c c c c c}
& $2$ & $3$ & $4$ & $5$ & $6$ & $7$ \\
\hline
$\mu_5$ & $1$ & $1$ & $1$ & $2$ & $3$ & $3$ \\
$\sgn_5$ & $+$ & $-$ & $+$ & $-$ & $-$ & $+$
\end{tabular}

\bigskip

\begin{tikzpicture}
\node{$1$}
	child[missing]
	child{node{$2+$}
		child{node{$3-$}
			child{node{$4+$}}
			child{node{$6-$}
				child{node{$7+$}}
				child[missing]
			}
		}
		child{node{$5-$}}
	};
\end{tikzpicture}
\end{center}
\end{minipage}
\begin{minipage}[t]{0.33\linewidth}

\begin{center}
Tree for $(\mu_6,\sgn_6)$

\bigskip

\begin{tabular}{ c |  c c c c c c}
& $2$ & $3$ & $4$ & $5$ & $6$ & $7$ \\
\hline
$\mu_6$ & $1$ & $1$ & $1$ & $3$ & $2$ & $2$ \\
$\sgn_6$ & $-$ & $+$ & $+$ & $-$ & $-$ & $+$
\end{tabular}

\bigskip

\begin{tikzpicture}
\node{$1$}
	child[missing]
	child{node{$2-$}
		child{node{$3+$}
			child{node{$4+$}}
			child{node{$5-$}}
		}
		child[missing]
		child{node{$6-$}
			child{node{$7+$}}
			child[missing]
		}
	};
\end{tikzpicture}
\end{center}
\end{minipage}

\newpage 

\subsection{Reference Forms and the Tamed Integration Domains\label%
{Sec:Wild5}}

\begin{definition}
A tamed pair $(\hat \mu,\hat{\func{sgn}})$ will be called a \emph{reference}
pair provided that in every left-branch, all the $+$ nodes come before all
the $-$ nodes.
\end{definition}

\begin{definition}
Given a reference pair $(\hat\mu, \hat{\func{sgn}})$, we will call a
permutation $\rho$ of $\{2, \ldots, k+1\}$ \emph{allowable} if it meets the
conditions in Definition \ref{D:wild}, i.e. it leaves all left branches
invariant and moreover, for each left branch $(\ell, \ldots, r)$, all plus
nodes appear in their original order and all minus nodes appear in their
original order within the list $(\rho^{-1}(\ell), \ldots, \rho^{-1}(r))$.
\end{definition}

For example, Tree $(\mu_1,\func{sgn}_1)$ in the Example \ref{EX:wild} is a
reference pair. If $(\ell, \ldots, r)$ is a full left branch of $\hat \mu$,
then the definition of reference pair means that there is some intermediate
position $m$ such that the $\func{sgn}$ map looks like this

\begin{center}
\begin{tabular}{c|ccccccc}
$j$ & $\ell$ & $\cdots$ & $m-1$ & $m$ & $m+1$ & $\cdots$ & $r$ \\ \hline
$\func{sgn}$ & $+$ & $+$ & $+$ & $-$ & $-$ & $-$ & $-$%
\end{tabular}
\end{center}

However, we note that it is possible that they are all plusses ($m=r+1$) or
they are all minuses $m=\ell$. With this notation, we can say that $\rho$ is
allowable if $\rho(\ell)< \cdots < \rho(m-1)$ and $\rho(m) < \cdots <
\rho(r) $, or equivalently, in the list 
\begin{equation*}
(\rho^{-1}(\ell), \ldots, \rho^{-1}(r))
\end{equation*}
the values $(\ell, \ldots, m-1)$ appear in that order and the values $(m,
\ldots, r)$ appear in that order.

\begin{proposition}
\label{P:wild-pool} An equivalence class of wildly relatable tamed pairs 
\begin{equation*}
Q = \{ (\mu, \func{sgn}) \}
\end{equation*}
contains a unique reference pair $(\hat \mu, \hat{\func{sgn}})$. By the
definition of wildly relatable, for every $(\mu,\func{sgn})\in Q$, there is
a unique permutation $\rho$ of $\{2, \ldots, k+1\}$ such that 
\begin{equation*}
(\mu,\func{sgn}) = W(\rho)(\hat \mu, \hat{\func{sgn}})
\end{equation*}
and this $\rho$ is allowable. The collection $P$ of all $\rho$ arising in
this way from $Q$ is exactly the set of \emph{all} allowable $\rho$ with
respect to the reference pair $(\hat \mu, \hat{\func{sgn}})$.
\end{proposition}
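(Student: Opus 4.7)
The plan is to establish existence and uniqueness of the reference pair $(\hat\mu,\hat{\func{sgn}})$ in $Q$, then existence and uniqueness of the allowable $\rho$ attached to each member of $Q$, and finally to identify $P$ with the full set of allowable permutations relative to $(\hat\mu,\hat{\func{sgn}})$. The first and last steps are essentially constructive once one observes that wild moves preserve tamed form (Proposition \ref{P:tamed-to-tamed}); the main subtlety is the uniqueness of $\rho$.

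For existence of a reference pair, start with any $(\mu,\func{sgn})\in Q$. On each full left branch $(\ell,\ldots,r)$ of $\mu$, let $p$ denote the number of $+$-nodes, and define the restriction of $\rho$ to that branch as the unique permutation that sends the $+$-positions (in their original increasing order) to $\ell,\ldots,\ell+p-1$ and the $-$-positions (in their original increasing order) to $\ell+p,\ldots,r$. The combined $\rho$ satisfies the requirement of Definition \ref{D:wild}, so by Proposition \ref{P:tamed-to-tamed} the pair $W(\rho)(\mu,\func{id},\func{sgn})$ is tamed, hence in $Q$, and by construction it has all $+$-nodes before all $-$-nodes on every left branch, i.e.\ it is a reference pair. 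For uniqueness, if $(\hat\mu_1,\hat{\func{sgn}}_1)$ and $(\hat\mu_2,\hat{\func{sgn}}_2)$ are both reference pairs in $Q$, there is an allowable $\rho$ with $(\hat\mu_2,\hat{\func{sgn}}_2)=W(\rho)(\hat\mu_1,\func{id},\hat{\func{sgn}}_1)$. Allowability forces $\rho$ to leave each left branch invariant and to preserve the relative orders of $+$-nodes and of $-$-nodes along that branch. Since in a reference pair the $+$-nodes occupy a consecutive prefix and the $-$-nodes the complementary suffix, $\rho$ must fix this prefix and this suffix monotonically, so $\rho=\func{id}$ and the two pairs coincide.

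For the permutation $\rho$ attached to a generic $(\mu,\func{sgn})\in Q$, existence is immediate from the wild equivalence relation once the reference pair is in hand. For uniqueness, recall from Definition \ref{D:wild} that $\func{sgn}=\hat{\func{sgn}}\circ\rho^{-1}$, while $\rho$ leaves each full left branch of $\hat\mu$ invariant. On a given left branch of $\hat\mu$, the reference property says the sign sequence is $(+,\dots,+,-,\dots,-)$, so $\func{sgn}$ tells us exactly which positions in that branch are the $\rho^{-1}$-images of the original $+$-block and which are the images of the original $-$-block. The allowability constraint then forces the $+$-block to be mapped in increasing order and the $-$-block to be mapped in increasing order, which pins $\rho^{-1}$ down on each branch and hence determines $\rho$ globally. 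For the identification of $P$, one inclusion is Definition \ref{D:wild}; conversely, given any allowable $\rho$, Proposition \ref{P:tamed-to-tamed} shows that $W(\rho)(\hat\mu,\func{id},\hat{\func{sgn}})$ is tamed, and it lies in $Q$ by construction, so $\rho\in P$.

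The step I expect to be most delicate is the uniqueness of $\rho$, because a priori two different allowable permutations could act identically on a particular $(\hat\mu,\hat{\func{sgn}})$. The observation that rescues us is that in a reference pair the sign pattern on each branch is monotone $(+\cdots +,-\cdots -)$, so the $+$-positions and $-$-positions are intrinsically distinguishable, and the allowability condition then leaves no freedom in how the two monotone blocks interleave beyond what the target $\func{sgn}$ already specifies. The existence of a reference pair in every equivalence class is exactly what makes this rigidity available, which is why the earlier effort to isolate the tamed (rather than merely upper-echelon) forms was necessary.
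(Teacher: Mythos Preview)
The paper states Proposition \ref{P:wild-pool} without proof, treating it as essentially a direct consequence of Definition \ref{D:wild}, the definition of allowable permutations, and Proposition \ref{P:tamed-to-tamed}. Your argument correctly supplies the omitted details and is sound.

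Two small points of phrasing are worth tightening. First, when you say ``the combined $\rho$ satisfies the requirement of Definition \ref{D:wild}'', note that Definition \ref{D:wild} literally restricts $\rho$ to a single left branch; what you actually use is that the combined $\rho$ is \emph{allowable} in the sense defined just before the proposition, and that it can be realized as a composition of single-branch wild moves, each of which preserves tamed form by Proposition \ref{P:tamed-to-tamed} and leaves the remaining left branches (as index sets and with their sign patterns) untouched. Second, in the uniqueness step for the reference pair you assert that two wildly-relatable reference pairs are connected by a \emph{single} allowable $\rho$, whereas wild relatability is a priori only the equivalence relation generated by single-branch moves; the bridge is that a composition of single-branch wild moves along any chain is again allowable with respect to the starting sign map, since if $\tau_1$ is allowable for $\func{sgn}_0$ and $\tau_2$ is allowable for $\func{sgn}_0\circ\tau_1^{-1}$, then $\tau_2\tau_1$ is allowable for $\func{sgn}_0$ (just check the order condition on same-sign pairs). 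With these routine verifications inserted, your proof is complete.
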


Now, recall \eqref{E:gamma-expansion2}: 
\begin{equation*}
\gamma ^{(1)}(t_{1})=\sum_{(\mu ,\func{sgn})\text{ tamed}}\int_{T_{D}(\mu
)}J_{\mu ,\func{sgn}}(\gamma ^{(k+1)})(t_{1},\underline{t}_{k+1})\,d%
\underline{t}_{k+1}
\end{equation*}%
In this sum, group together equivalence classes $Q$ of wildly relatable $%
(\mu ,\func{sgn})$. 
\begin{equation}
\gamma ^{(1)}(t_{1})=\sum_{\text{classes }Q}\sum_{(\mu ,\sigma )\in
Q}\int_{T_{D}(\mu )}J_{\mu ,\func{sgn}}(\gamma ^{(k+1)})(t_{1},\underline{t}%
_{k+1})\,d\underline{t}_{k+1}  \label{E:gamma-expansion3}
\end{equation}%
Each class $Q$ can be represented by a unique reference $(\hat{\mu},\hat{%
\func{sgn}})$, and as in Proposition \ref{P:wild-pool} for each $(\mu ,\func{%
sgn})\in Q$, there is an allowable $\rho \in P$ (with respect to $(\hat{\mu},%
\hat{\func{sgn}})$) such that 
\begin{equation*}
(\mu ,\func{sgn})=W(\rho )(\hat{\mu},\hat{\func{sgn}})
\end{equation*}%
Since $W$ is an action, we can write 
\begin{equation*}
(\hat{\mu},\hat{\func{sgn}})=W(\rho ^{-1})(\mu ,\func{sgn})
\end{equation*}%
Into the action $W(\rho ^{-1})$, let us input the identity time permutation
and define $\sigma $ as the output time permutation, i.e. 
\begin{equation*}
(\hat{\mu},\sigma ,\hat{\func{sgn}})=W(\rho ^{-1})(\mu ,\text{id},\func{sgn})
\end{equation*}%
where, in accordance with Definition \ref{D:wild}, $\sigma =\rho ^{-1}$.
Since $\rho $ is allowable, this implies that for each left brach $(\ell
,\ldots ,r)$ with $m$ as defined above, $\sigma ^{-1}(\ell )<\cdots <\sigma
^{-1}(m-1)$ and $\sigma ^{-1}(m)<\cdots <\sigma ^{-1}(r)$. In other words, $%
(\ell ,\ldots ,m-1)$ and $(m,\ldots ,r)$ appear in order inside the list of
values $(\sigma (\ell ),\ldots ,\sigma (r))$. By Proposition \ref%
{P:wild-preserves} 
\begin{equation*}
\int_{T_{D}(\mu )}J_{\mu ,\func{sgn}}(\gamma ^{(k+1)})(t_{1},\underline{t}%
_{k+1})\,d\underline{t}_{k+1}=\int_{\sigma \lbrack T_{D}(\hat{\mu})]}J_{\hat{%
\mu},\hat{\func{sgn}}}(\gamma ^{(k+1)})(t_{1},\underline{t}_{k+1})\,d%
\underline{t}_{k+1}
\end{equation*}%
Now as we sum this over all $(\mu ,\func{sgn})\in Q$, we are summing over
all $\rho \in P$ and hence over all $\sigma =\rho ^{-1}$ meeting the
condition mentioned above. Hence the integration domains on the right side
union to a set that we will denote 
\begin{equation*}
T_{R}(\hat{\mu},\hat{\func{sgn}})\overset{\mathrm{def}}{=}\bigcup_{\rho \in
P}\sigma (T_{D}(\hat{\mu}))
\end{equation*}%
that can be described as follows: for each left branch $(\ell ,\ldots ,r)$,
with 
\begin{equation*}
z=\mu (\ell )=\cdots =\mu (r)
\end{equation*}%
and $m$ the division index between plus and minus nodes (as defined above), $%
T_{R}(\hat{\mu},\hat{\func{sgn}})$ is described by the inequalities 
\begin{equation}
t_{m-1}\leq \cdots \leq t_{\ell }\leq t_{z}\quad \text{and}\quad t_{r}\leq
\cdots \leq t_{m}\leq t_{z}  \label{eqn:reference limits}
\end{equation}%
Plugging into \eqref{E:gamma-expansion3}, we obtain

\begin{proposition}
\label{Prop:ref form}The Duhamel expansion to coupling order $k$ can be
grouped into at most $8^{k}$ terms: 
\begin{equation}
\gamma ^{(1)}(t_{1})=\sum_{\text{reference }(\hat{\mu},\hat{\func{sgn}}%
)}\int_{T_{R}(\hat{\mu},\hat{\func{sgn}})}J_{\mu ,\func{sgn}}(\gamma
^{(k+1)})(t_{1},\underline{t}_{k+1})\,d\underline{t}_{k+1}
\label{E:gamma-expansion4}
\end{equation}%
where each integration domain $T_{R}(\hat{\mu},\hat{\func{sgn}})$ is as
defined in (\ref{eqn:reference limits}).
\end{proposition}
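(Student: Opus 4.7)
The plan is to assemble the pieces of the extended board game built up in this section. Starting from the tamed-class expansion \eqref{E:gamma-expansion2}, I would regroup the sum by the equivalence relation of wild relatability: by Proposition \ref{P:wild-pool}, each equivalence class $Q$ has a unique reference representative $(\hat{\mu},\hat{\func{sgn}})$, and the elements of $Q$ are in bijection with the set $P$ of allowable permutations $\rho$ via $(\mu,\func{sgn})=W(\rho)(\hat{\mu},\hat{\func{sgn}})$. For each such $(\mu,\func{sgn})$, feeding the identity time permutation into $W(\rho^{-1})$ yields $(\hat{\mu},\sigma,\hat{\func{sgn}})$ with $\sigma=\rho^{-1}$, so Proposition \ref{P:wild-preserves} identifies the $(\mu,\func{sgn})$-integral with that of the common integrand $J_{\hat{\mu},\hat{\func{sgn}}}$ over the pushed-forward simplex $\sigma[T_D(\hat{\mu})]$. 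Summing over $\rho\in P$, and noting that these simplices overlap only on a set of measure zero, collapses the inner sum over $Q$ into a single integral of $J_{\hat{\mu},\hat{\func{sgn}}}$ over $T_R(\hat{\mu},\hat{\func{sgn}}):=\bigcup_{\rho\in P}\sigma[T_D(\hat{\mu})]$; summing over classes $Q$ then produces \eqref{E:gamma-expansion4}.

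The hard step is checking that $T_R(\hat{\mu},\hat{\func{sgn}})$ admits the product-of-chains description \eqref{eqn:reference limits}. I would handle each full left branch $(\ell,\ldots,r)$ of $\hat{\mu}$ independently: writing $z$ for its parent node and $m$ for the plus/minus split index, the parent-child inequalities in $T_D(\hat{\mu})$ restricted to this branch form a single decreasing chain $t_z\geq t_\ell\geq t_{\ell+1}\geq\cdots\geq t_r$. Allowability of $\rho$ forces $\sigma=\rho^{-1}$ restricted to $\{\ell,\ldots,r\}$ to be an arbitrary shuffle that preserves the internal order of the plus block $(\ell,\ldots,m-1)$ and of the minus block $(m,\ldots,r)$, and the push-forward of the branch chain under $\sigma$ is $t_z\geq t_{\sigma(\ell)}\geq\cdots\geq t_{\sigma(r)}$. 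The union over all such shuffle permutations is governed by the elementary combinatorial fact that two sequences of real numbers can be interleaved into a decreasing sequence if and only if each is separately decreasing. This fact precisely converts the union into the intersection $\{t_z\geq t_\ell\geq\cdots\geq t_{m-1}\}\cap\{t_z\geq t_m\geq\cdots\geq t_r\}$, which is \eqref{eqn:reference limits} on this branch. Doing this branch by branch across $\hat{\mu}$ gives the full description of $T_R(\hat{\mu},\hat{\func{sgn}})$.

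For the counting bound, there are at most $C_k\leq 4^k$ admissible skeletons on $k$ nodes (the $k$-th Catalan number), and within each skeleton the number of reference sign assignments is bounded by the number of unrestricted sign assignments, which is $2^k$; hence the total number of reference pairs is at most $4^k\cdot 2^k=8^k$, as claimed. The main obstacle in the program is the shuffle identity in the second paragraph: it is the key new combinatorial input beyond the original KM board game, and it is exactly what converts the apparently fractured union of simplices $D_m$ into a product of chains amenable to the $U$-$V$ trilinear estimates of \S\ref{Sec:UVandTrilinear} when these are carried into \S\ref{Sec:GP Uniqueness-2}.
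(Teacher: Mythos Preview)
Your proposal is correct and follows essentially the same route as the paper: you start from \eqref{E:gamma-expansion2}, regroup into wild-relatability classes via Proposition~\ref{P:wild-pool}, convert each integral to the reference integrand using Proposition~\ref{P:wild-preserves} with $\sigma=\rho^{-1}$, and union the pushed-forward domains $\sigma[T_D(\hat\mu)]$ over all allowable $\rho$ to obtain $T_R(\hat\mu,\hat{\func{sgn}})$; your branch-by-branch shuffle argument for identifying this union with \eqref{eqn:reference limits} and the $C_k\cdot 2^k\le 8^k$ count are exactly what the paper does (the paper states the domain description without spelling out the shuffle identity, so your second paragraph is in fact slightly more explicit).
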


Returning to Example \ref{EX:wild}, $(\mu_1,\func{sgn}_1)$ is the reference
pair. To combine the Duhamel integrals as above, we convert all other five
tamed forms $(\mu_j,\func{sgn}_j)$ to $(\mu_1,\func{sgn}_1)$ via wild moves.
The resulting combined time integration set will be read off from the $%
(\mu_1,\func{sgn}_1)$ tree as 
\begin{equation*}
t_3\leq t_2 \leq t_1 \,, \qquad t_4 \leq t_1 \,, \qquad t_5 \leq t_2 \,,
\qquad t_6 \leq t_4 \,, \qquad t_7 \leq t_4
\end{equation*}

Proposition \ref{Prop:ref form} and the integration domain (\ref%
{eqn:reference limits}) is compatible with the $U$-$V$ space techniques we
proved in \S \ref{Sec:UVandTrilinear}. This fact may not be so clear at the
moment as they are written with many shorthands. We will prove this fact in 
\S \ref{Sec:WhyWeNeedExtKM}.

\section{The Uniqueness for GP Hierarchy (\protect\ref{Hierarchy:T^4 cubic
GP}) - Actual Estimates\label{Sec:GP Uniqueness-2}}

The main goal of this section is to prove Proposition \ref%
{Prop:KeyUniquenessEstimate} on estimating $J_{\mu _{m},sgn}^{(k+1)}$. Of
course, by writting $J_{\mu _{m},sgn}^{(k+1)}$, we mean the reference form
now. We first present an example in \S \ref{sec:key example} to convey the
basic ideas of the proof. We then, in \S \ref{Sec:WhyWeNeedExtKM},
demonstrate why we need the extended KM board game and prove that
Proposition \ref{Prop:ref form} and the integration domain (\ref%
{eqn:reference limits}) is compatible with the $U$-$V$ space techniques.
Once that is settled, the main idea idea in \S \ref{sec:key example} will
work for the general case. Thus we estimate the general case in \S \ref%
{Sec:pf of the general case}.

The time integration limits in \S \ref{Sec:Wild5} will be put to use with
Lemmas \ref{Lem:T4TrilinearWithUV} and \ref{Lem:T4TrilinearWithUVH1}. With
the trivial estimate $\left\Vert u\right\Vert _{Y^{s}}\lesssim \left\Vert
u\right\Vert _{X^{s}}$, Lemmas \ref{Lem:T4TrilinearWithUV} and \ref%
{Lem:T4TrilinearWithUVH1} read 
\begin{eqnarray}
&&\left\Vert \int_{a}^{t}e^{-i(t-t^{\prime })\Delta }\left(
u_{1}u_{2}u_{3}\right) (\bullet ,t^{\prime })\,dt^{\prime }\right\Vert
_{X^{-1}}  \label{estimate:Duhamel mlinear-1} \\
&\leqslant &C\Vert u_{1}\Vert _{X^{-1}}\left( T^{\frac{1}{7}}M_{0}^{\frac{3}{%
5}}\Vert P_{\leqslant M_{0}}u_{2}\Vert _{X^{1}}+\Vert P_{>M_{0}}u_{2}\Vert
_{X^{1}}\right) \Vert u_{3}\Vert _{X^{1}},  \notag
\end{eqnarray}%
\begin{eqnarray}
&&\left\Vert \int_{a}^{t}e^{-i(t-t^{\prime })\Delta }\left(
u_{1}u_{2}u_{3}\right) (\bullet ,t^{\prime })\,dt^{\prime }\right\Vert
_{X^{1}}  \label{estimate:Duhamel mlinear-2} \\
&\leqslant &C\Vert u_{1}\Vert _{X^{1}}\left( T^{\frac{1}{7}}M_{0}^{\frac{3}{5%
}}\Vert P_{\leqslant M_{0}}u_{2}\Vert _{X^{1}}+\Vert P_{>M_{0}}u_{2}\Vert
_{X^{1}}\right) \Vert u_{3}\Vert _{X^{1}},  \notag
\end{eqnarray}%
\begin{equation}
\left\Vert \int_{a}^{t}e^{-i(t-t^{\prime })\Delta }\left(
u_{1}u_{2}u_{3}\right) (\bullet ,t^{\prime })\,dt^{\prime }\right\Vert
_{X^{-1}}\leqslant C\Vert u_{1}\Vert _{X^{-1}}\Vert u_{2}\Vert _{X^{1}}\Vert
u_{3}\Vert _{X^{1}},  \label{estimate:Duhamel mlinear-3}
\end{equation}%
\begin{equation}
\left\Vert \int_{a}^{t}e^{-i(t-t^{\prime })\Delta }\left(
u_{1}u_{2}u_{3}\right) (\bullet ,t^{\prime })\,dt^{\prime }\right\Vert
_{X^{1}}\leqslant C\Vert u_{1}\Vert _{X^{1}}\Vert u_{2}\Vert _{X^{1}}\Vert
u_{3}\Vert _{X^{1}}.  \label{estimate:Duhamel mlinear-4}
\end{equation}%
If $u_{j}=e^{it^{\prime }\Delta }f_{j}$ with for some $j$ and some $f_{j}$
independent of $t$ and $t^{\prime }$, we can replace the $X^{s}$ norm of $%
u_{j}$ in (\ref{estimate:Duhamel mlinear-1})-(\ref{estimate:Duhamel
mlinear-4}) with the $H^{s}$ norm of $f_{j}$. We do not use "$\lesssim $" in
(\ref{estimate:Duhamel mlinear-1})-(\ref{estimate:Duhamel mlinear-4}) as we
are going to use them repeatly and the constants are going to accumulate.

\subsection{An Example of How to Estimate\label{sec:key example}}

We estimate the integral in Example \ref{example:one could indeed combine
more} 
\begin{equation*}
I=\int_{t_{4}=0}^{t_{1}}\int_{t_{2}=0}^{t_{1}}%
\int_{t_{3}=t_{4}}^{t_{1}}U^{(1)}(t_{1}-t_{2})B_{1,2}^{+}U^{(2)}(t_{2}-t_{3})B_{1,3}^{-}U^{(3)}(t_{3}-t_{4})B_{3,4}^{+}\gamma ^{(4)}d%
\underline{t}_{4}
\end{equation*}%
where the integration limits are already computed in \S \ref{Sec:Wild1}. Its
reference tree is exactly the tree corresponding to $I_{2}$ in Example \ref%
{example:one could indeed combine more}.

Plug in (\ref{eqn:qdF Rep for zero initial solution}), we find the integrand
is in fact 
\begin{equation*}
I=\int_{t_{4}=0}^{t_{1}}dt_{4}\int d\mu _{t_{4}}\left( \phi \right)
\int_{t_{2}=0}^{t_{1}}dt_{2}\int_{t_{3}=t_{4}}^{t_{1}}U_{1,2}(\left\vert
U_{2,4}\phi \right\vert ^{2}U_{2,4}\phi )(x_{1})U_{1,3}(\overline{%
U_{3,4}\phi }\overline{U_{3,4}\phi }U_{3,4}\left( \left\vert \phi
\right\vert ^{2}\phi \right) )(x_{1}^{\prime })dt_{3}
\end{equation*}%
We denote the cubic term $\left\vert \phi \right\vert ^{2}\phi $ generated
in the most inner coupling with $\mathcal{C}_{R}^{(4)}$ where the subscript $%
R$ means \textquotedblleft rough\textquotedblright\ as it has no propagator
inside to smooth things out. That is, 
\begin{equation*}
I=\int_{t_{4}=0}^{t_{1}}dt_{4}\int d\mu _{t_{4}}\left( \phi \right)
\int_{t_{2}=0}^{t_{1}}dt_{2}\int_{t_{3}=t_{4}}^{t_{1}}U_{1,2}(\left\vert
U_{2,4}\phi \right\vert ^{2}U_{2,4}\phi )(x_{1})U_{1,3}(\overline{%
U_{3,4}\phi }\overline{U_{3,4}\phi }U_{3,4}\mathcal{C}_{R}^{(4)})(x_{1}^{%
\prime })dt_{3}
\end{equation*}

For (\ref{eqn:a typical term in uniqueness}) with a general $k$, we will use 
$\mathcal{C}_{R}^{(k+1)}$ to denote this most inner cubic term. Notice that, 
$\mathcal{C}_{R}^{(k+1)}$ is always independent of time and is hence
qualified to be an $f_{j}$ in estimates (\ref{estimate:Duhamel mlinear-1}) -
(\ref{estimate:Duhamel mlinear-4}).

In the 2nd coupling, if we denote 
\begin{equation*}
D_{\phi ,R}^{(3)}=U_{-t_{3}}(\overline{U_{3,4}\phi }\overline{U_{3,4}\phi }%
U_{3,4}\mathcal{C}_{R}^{(4)})(x_{1}^{\prime })
\end{equation*}%
we have 
\begin{equation*}
\int_{t_{3}=t_{4}}^{t_{1}}U_{1,3}(\overline{U_{3,4}\phi }\overline{%
U_{3,4}\phi }U_{3,4}\mathcal{C}_{R}^{(4)})(x_{1}^{\prime
})dt_{3}=\int_{t_{3}=t_{4}}^{t_{1}}U_{1}D_{\phi ,R}^{(3)}dt_{3}
\end{equation*}%
In general, let us use $D^{(l+1)}$, which is $D_{\phi ,R}^{(3)}$ here, to
denote the cubic term together with the $U(-t_{l+1})$ during the $l$-th
coupling where $l<k$. We add a $\phi $ subscript if the cubic term,
generated at the $l$-th coupling, has contracted a $U\phi $. We add a $R$
subscript if the cubic term, generated at the $l$-th coupling, has
contracted the rough cubic term $\mathcal{C}_{R}^{(k+1)}$ or a $%
D_{R}^{(j+1)} $ for some $j$. The coupling process makes sure that every
time integral corresponds to one and only one cubic term, thus the notation
of $D$ is well-defined. We suppress all the $t_{k+1}$-dependence, which is
the $t_{4}$-dependence here, in all the $D$ markings as we will not explore
any smoothing giving by the $dt_{k+1}$ integral. Finally, notice that $%
D^{(l+1)}$ always carry the $t_{l+1}$ variable and will make a Duhamel term
whenever it is hit by a $U(t_{j})\ $where $j\neq l+1$.

Then, using the same marking strategy at the 1st coupling, we reach 
\begin{equation*}
I=\int_{t_{4}=0}^{t_{1}}dt_{4}\int d\mu _{t_{4}}\left( \phi \right) \left(
\int_{t_{2}=0}^{t_{1}}U_{1}D_{\phi }^{(2)}\left( x_{1}\right) dt_{2}\right)
\left( \int_{t_{3}=t_{4}}^{t_{1}}U_{1}D_{\phi ,R}^{(3)}(x_{1}^{\prime
})dt_{3}\right)
\end{equation*}%
We can now start estimating. Taking the norm inside, 
\begin{eqnarray*}
&&\left\Vert \left\langle \nabla _{x_{1}}\right\rangle ^{-1}\left\langle
\nabla _{x_{1}^{\prime }}\right\rangle ^{-1}I\right\Vert _{L_{t_{1}}^{\infty
}L_{x,x^{\prime }}^{2}} \\
&\leqslant &\int_{0}^{T}\int dt_{4}d\left\vert \mu _{t_{4}}\right\vert
\left( \phi \right) \left\Vert (\left\langle \nabla _{x_{1}}\right\rangle
^{-1}\int_{t_{2}=0}^{t_{1}}U_{1}D_{\phi }^{(2)}\left( x_{1}\right)
dt_{2})(\left\langle \nabla _{x_{1}^{\prime }}\right\rangle
^{-1}\int_{t_{3}=t_{4}}^{t_{1}}U_{1}D_{\phi ,R}^{(3)}(x_{1}^{\prime
})dt_{3})\right\Vert _{L_{t_{1}}^{\infty }L_{x,x^{\prime }}^{2}}
\end{eqnarray*}%
the $L_{t_{1}}^{\infty }L_{x,x^{\prime }}^{2}$ norm \textquotedblleft
factors\textquotedblright\ in the sense that 
\begin{eqnarray*}
&&\left\Vert \left\langle \nabla _{x_{1}}\right\rangle ^{-1}\left\langle
\nabla _{x_{1}^{\prime }}\right\rangle ^{-1}I\right\Vert _{L_{t_{1}}^{\infty
}L_{x,x^{\prime }}^{2}} \\
&\leqslant &\int_{0}^{T}\int \left\Vert \int_{t_{2}=0}^{t_{1}}U_{1}D_{\phi
}^{(2)}\left( x_{1}\right) dt_{2}\right\Vert _{L_{t_{1}}^{\infty
}H_{x}^{-1}}\left\Vert \int_{t_{3}=t_{4}}^{t_{1}}U_{1}D_{\phi
,R}^{(3)}(x_{1}^{\prime })dt_{3}\right\Vert _{L_{t_{1}}^{\infty
}H_{x^{\prime }}^{-1}}dt_{4}d\left\vert \mu _{t_{4}}\right\vert \left( \phi
\right)
\end{eqnarray*}%
The term $D_{\phi }^{(2)}$ carries no $R$ subscript, we can bump it to $%
H^{1} $ and then use the embedding (\ref{E:H^1/2-embedding}), we have 
\begin{eqnarray*}
&&\left\Vert \left\langle \nabla _{x_{1}}\right\rangle ^{-1}\left\langle
\nabla _{x_{1}^{\prime }}\right\rangle ^{-1}I\right\Vert _{L_{t_{1}}^{\infty
}L_{x,x^{\prime }}^{2}} \\
&\leqslant &\int_{0}^{T}\int \left\Vert
\int_{t_{3}=t_{4}}^{t_{1}}U_{1}D_{\phi ,R}^{(3)}(x_{1}^{\prime
})dt_{3}\right\Vert _{X^{-1}}\left\Vert \int_{t_{2}=0}^{t_{1}}U_{1}D_{\phi
}^{(2)}\left( x_{1}\right) dt_{2}\right\Vert _{X^{1}}dt_{4}d\left\vert \mu
_{t_{4}}\right\vert \left( \phi \right)
\end{eqnarray*}%
Applying (\ref{estimate:Duhamel mlinear-2}) to the 1st coupling and replace
all $\left\Vert U\phi \right\Vert _{X^{s}}$ by $\left\Vert \phi \right\Vert
_{H^{s}}$, we have 
\begin{eqnarray*}
&&\left\Vert \left\langle \nabla _{x_{1}}\right\rangle ^{-1}\left\langle
\nabla _{x_{1}^{\prime }}\right\rangle ^{-1}I\right\Vert _{L_{t_{1}}^{\infty
}L_{x,x^{\prime }}^{2}} \\
&\leqslant &C\int_{0}^{T}\int \left\Vert
\int_{t_{3}=t_{4}}^{t_{1}}U_{1}D_{\phi ,R}^{(3)}(x_{1}^{\prime
})dt_{3}\right\Vert _{X^{-1}}\left\Vert \phi \right\Vert _{H^{1}}^{2} \\
&&\times \left( T^{\frac{1}{7}}M_{0}^{\frac{3}{5}}\Vert P_{\leqslant
M_{0}}\phi \Vert _{H^{1}}+\Vert P_{>M_{0}}\phi \Vert _{H^{1}}\right)
^{1}dt_{4}d\left\vert \mu _{t_{4}}\right\vert \left( \phi \right)
\end{eqnarray*}%
Utilizing (\ref{estimate:Duhamel mlinear-1}) to the 2nd coupling and replace
all $\left\Vert U\phi \right\Vert _{X^{s}}$ by $\left\Vert \phi \right\Vert
_{H^{s}}$, we have 
\begin{eqnarray*}
&&\left\Vert \left\langle \nabla _{x_{1}}\right\rangle ^{-1}\left\langle
\nabla _{x_{1}^{\prime }}\right\rangle ^{-1}I\right\Vert _{L_{t_{1}}^{\infty
}L_{x,x^{\prime }}^{2}} \\
&\leqslant &C^{2}\int_{0}^{T}\int \left\Vert \phi \right\Vert
_{H^{1}}^{3}\left( T^{\frac{1}{7}}M_{0}^{\frac{3}{5}}\Vert P_{\leqslant
M_{0}}\phi \Vert _{H^{1}}+\Vert P_{>M_{0}}\phi \Vert _{H^{1}}\right)
^{2}\left\Vert \mathcal{C}_{R}^{(4)}\right\Vert _{H^{-1}}dt_{4}d\left\vert
\mu _{t_{4}}\right\vert \left( \phi \right) \\
&=&C^{2}\int_{0}^{T}\int \left\Vert \phi \right\Vert _{H^{1}}^{3}\left( T^{%
\frac{1}{7}}M_{0}^{\frac{3}{5}}\Vert P_{\leqslant M_{0}}\phi \Vert
_{H^{1}}+\Vert P_{>M_{0}}\phi \Vert _{H^{1}}\right) ^{2}\left\Vert
\left\vert \phi \right\vert ^{2}\phi \right\Vert _{H^{-1}}dt_{4}d\left\vert
\mu _{t_{4}}\right\vert \left( \phi \right)
\end{eqnarray*}%
Employing the 4D Sobolev 
\begin{equation}
\left\Vert \left\vert \phi \right\vert ^{2}\phi \right\Vert
_{H^{-1}}\leqslant C\left\Vert \phi \right\Vert _{H^{1}}^{3},
\label{Sobolev:4D}
\end{equation}%
on the rough coupling, we get to 
\begin{eqnarray*}
&&\left\Vert \left\langle \nabla _{x_{1}}\right\rangle ^{-1}\left\langle
\nabla _{x_{1}^{\prime }}\right\rangle ^{-1}I\right\Vert _{L_{t_{1}}^{\infty
}L_{x,x^{\prime }}^{2}} \\
&\leqslant &C^{3}\int_{0}^{T}dt_{4}\int d\left\vert \mu _{t_{4}}\right\vert
\left( \phi \right) \left\Vert \phi \right\Vert _{H^{1}}^{6}\left( T^{\frac{1%
}{7}}M_{0}^{\frac{3}{5}}\Vert P_{\leqslant M_{0}}\phi \Vert _{H^{1}}+\Vert
P_{>M_{0}}\phi \Vert _{H^{1}}\right) ^{2}.
\end{eqnarray*}%
Plug in the support property of the measure (see (\ref{set:spt of qdF
Measure})), 
\begin{eqnarray}
\left\Vert \left\langle \nabla _{x_{1}}\right\rangle ^{-1}\left\langle
\nabla _{x_{1}^{\prime }}\right\rangle ^{-1}I\right\Vert _{L_{t_{1}}^{\infty
}L_{x,x^{\prime }}^{2}} &\leqslant &C^{3}C_{0}^{6}\left( T^{\frac{1}{7}%
}M_{0}^{\frac{3}{5}}C_{0}+\varepsilon \right) ^{2}\int_{0}^{T}dt_{4}\int
d\left\vert \mu _{t_{4}}\right\vert \left( \phi \right)
\label{E:estimate of example gamma(1)} \\
&\leqslant &C^{3}C_{0}^{6}\left( T^{\frac{1}{7}}M_{0}^{\frac{3}{5}%
}C_{0}+\varepsilon \right) ^{2}2T.  \notag
\end{eqnarray}%
and we are done.

\subsection{The Extended Klaineriman-Machedon Board Game is Compatible\label%
{Sec:WhyWeNeedExtKM}}

In \S \ref{sec:key example}, the $U$-$V$ estimates worked perfectly with the
integration limits obtained via the extended Klainerman-Machedon board game
in \S \ref{Sec:ElaboratedKMBoardGame}. One certainly wonders if the extended
KM board game is necessary and if the extended KM board game is compatible
with the estimates in the general case.

In the beginning of \S \ref{Sec:Wild1}, we briefly mentioned the problem one
would face without the extended Klainerman-Machedon board game. We can now
explain by a concrete example. For comparison, rewrite $I_{1}$, in Example %
\ref{example:one could indeed combine more} with the above notation,

\begin{eqnarray*}
I_{1}
&=&\int_{t_{4}=0}^{t_{1}}\int_{t_{2}=t_{4}}^{t_{1}}%
\int_{t_{3}=0}^{t_{2}}U^{(1)}(t_{1}-t_{2})B_{1,2}^{-}U^{(2)}(t_{2}-t_{3})B_{1,3}^{+}U^{(3)}(t_{3}-t_{4})B_{3,4}^{-}\gamma ^{(4)}d%
\underline{t}_{4} \\
&=&\int_{t_{4}=0}^{t_{1}}dt_{4}\int d\mu _{t_{4}}\left( \phi \right)
\int_{t_{2}=t_{4}}^{t_{1}}\left( U_{1}D_{\phi }^{(2)}\left( x_{1}\right) 
\left[ \int_{t_{3}=0}^{t_{2}}U_{1}D_{\phi ,R}^{(3)}(x_{1}^{\prime })dt_{3}%
\right] \right) dt_{2}.
\end{eqnarray*}%
One sees that the the $dt_{3}$ integral is encapsulated inside the $dt_{2}$
integral, or the $x$ and $x^{\prime }$ parts do not factor, even with the
carefully worked out time integration limits in the original
Klainerman-Machedon board game. Hence, one cannot apply the $U$-$V$
estimates like the above. To be very precise for readers curious about this,
as there are only two integrals that got entangled, $I_{1}$ could in fact be
estimated using \cite[(4.25), p.60]{KTV}, based on the idea of integration
by parts. However, if one allows the coupling level to be large, it is not
difficult to find, at any stage of a long coupling, multiple encapsulations,
which have more than three factors entangled together and cannot be
estimated by the ideas of integration by parts. We are not presenting such a
construction as the formula would be unnecessarily long and does not give
new ideas. Finally, we remark that, such an entanglement problem, generated
by the time integral reliance of the $U$-$V$ spaces techniques, does not
show up in the couplings with only $B^{+}$ or only $B^{-}$ or does not have
to emerge in the $\mathbb{R}^{3}/\mathbb{R}^{4}/\mathbb{T}^{3}$ cases in
which $U$-$V$ spaces are not necessary.

We now prove how the extended KM board game is compatible with the $U$-$V$
techniques. Given a reference tree, we will create a Duhamel tree (we write $%
D$-tree for short) to supplement the reference tree. The $D$-tree
supplements the given reference tree in the sense that the $D$-tree shows
the arrangment of the cubic terms $D^{(j)}$, defined in \S \ref{sec:key
example}, completely, while one could also read the integation limits off
from it as in the given reference tree. The whole point of the $D$-tree is
to get these two pieces of information in the same picture as the proof of
compatibility then follows trivially. Of course, from now on, we assume (\ref%
{eqn:qdF Rep for zero initial solution}) has already been plugged in and we
are doing the $dt_{k+1}$ integral, which is from $0$ to $t_{1}$, last.

\begin{algorithm}
\label{alg:D-tree}\leavevmode In the $D$ tree, we will write each node
prefaced by a $D$. Each node $D^{(j)}$ will have a left child, middle child,
and right child:
\end{algorithm}

\begin{equation}
\begin{tikzpicture} \node{$D^{(j)}$} child{node{$ \text{
\tikz[baseline=(X.base)] \node (X) [draw, shape=circle, inner sep=0]
{${\strut \;}$}; } $} edge from parent node[right]{\tiny{ls}}} child{node{$
\text{ \tikz[baseline=(X.base)] \node (X) [draw, shape=circle, inner sep=0]
{${\strut \;}$}; } $} edge from parent
node[right]{\tiny{\hspace{-2px}r$+$}}} child{node{$ \text{
\tikz[baseline=(X.base)] \node (X) [draw, shape=circle, inner sep=0]
{${\strut \;}$}; } $} edge from parent node[right]{\tiny{r$-$}}};
\end{tikzpicture}  \label{tree-D-tree example}
\end{equation}%
The labeling of \emph{ls}, \emph{$r+$}, \emph{$r-$} for the left, middle,
and right child, is a shorthand mnemonic for the procedure for determining
the children of $D^{(j)}$ by inspecting of the reference tree. Apply the
following steps for $j=1$ (with no left child), then repeat the steps for
all $D^{(j)}$ that appear as children. Continue to repeat the steps below
until all vertices without children are $F$:

\begin{enumerate}
\item To determine the left child of $D^{(j)}$, locate node $j$ in the
reference tree and apply the \textquotedblleft left same\textquotedblright\
rule. If the $j$ node in the reference tree is $+$, and $j+$ has a left
child $\ell +$ (of the same sign $+$), then place $D^{(l)}$ as the left
child in the $D$-tree. If the $j$ node in the reference tree is $-$, and $j-$
has a left child $\ell -$ (of the same sign $-$), then place $D^{(l)}$ as
the left child in the $D$-tree. If the $j$ node in the reference tree does
not have a left child of the same sign, then place $F$ as the left child of $%
D^{(j)}$ in the $D$-tree.

\item To determine the middle and right child of $D^{(j)}$, locate the node $%
j$ in the reference tree. Examine the right child of $j$ (if it exists), and
consider its full left branch 
\begin{equation*}
p_{1}+,\;\ldots ,\;p_{\alpha }+,\;n_{1}-,\;\ldots ,\;n_{\beta }-
\end{equation*}%
(It is possible here that $\alpha =0$ (no $+$ nodes on this left branch) and
it is also possible that $\beta =0$ (no $-$ nodes on this left branch). In
the $D$-tree, as the middle child of $D^{(j)}$, place $D^{(p_{1})}$, and as
the right child of $D^{(j)}$, place $D^{(n_{1})}$. If either or both are
missing ($\alpha =0$ or $\beta =0$ respectively), then place $F$ instead.
\end{enumerate}

A quick and simple example is the $D$-tree for the integral in \S \ref%
{sec:key example}

\begin{equation*}
\begin{tikzpicture} \node{$D^{(1)}$} child[missing] child[missing]
child[missing] child{node{$D^{(2)}$} child{node{$F$} edge from parent
node[right]{\tiny{ls}}} child{node{$F$} edge from parent
node[right]{\tiny{\hspace{-2px}r$+$}}} child{node{$F$} edge from parent
node[right]{\tiny{r$-$}}} edge from parent
node[right]{\tiny{\hspace{-2px}r$+$}} } child[missing] child[missing]
child{node{$D^{(3)}$} child{node{$F$} edge from parent
node[right]{\tiny{ls}}} child{node{$ \text{ \tikz[baseline=(X.base)] \node
(X) [draw, shape=circle, inner sep=0] {${\strut \mathcal{C}^{(4)}}$}; } $}
edge from parent node[right]{\tiny{\hspace{-2px}r$+$}}} child{node{$F$} edge
from parent node[right]{\tiny{r$-$}}} edge from parent
node[right]{\tiny{\;\;r$-$}} }; \end{tikzpicture}
\end{equation*}

Here is a longer example.

\begin{example}
\label{example:ref-tree to D tree-1}Consider the following reference tree
\end{example}

\begin{equation*}
\begin{tikzpicture} \node{$1$} child[missing] child{node{$2+$}
child{node{$3+$} child{node{$4-$} child[missing] child{node{$8+$}} }
child[missing] } child[missing] child{node{$5+$} child{node{$6-$}
child{node{$7-$}} child{node{$9+$}} } child[missing] } }; \end{tikzpicture}
\end{equation*}%
Its supplemental $D$-tree is

\resizebox{6in}{!}{
\begin{tikzpicture} \node{$D^{(1)}$} child[missing] child[missing]
child[missing] child[missing] child[missing] child[missing]
child{node{$D^{(2)}$} child{node{$D^{(3)}$} child{node{$F$}}
child{node{$F$}} child{node{$F$}} edge from parent node[right]{\tiny{ls}} }
child[missing] child[missing] child{node{$D^{(5)}$} child{node{$F$}}
child{node{$F$}} child{node{$F$}} edge from parent
node[right]{\tiny{\hspace{-2px}r$+$}} } child[missing] child[missing]
child{node{$D^{(6)}$} child{node{$D^{(7)}$} child{node{$F$}}
child{node{$F$}} child{node{$F$}} edge from parent node[right]{\tiny{ls}} }
child{node{$ \text{ \tikz[baseline=(X.base)] \node (X) [draw, shape=circle,
inner sep=0] {${\strut \mathcal{C}^{(9)}}$}; } $} edge from parent
node[right]{\tiny{\hspace{-2px}r$+$}}} child{node{$F$} edge from parent
node[right]{\tiny{r$-$}}} edge from parent node[right]{\tiny{r$-$}} } edge
from parent node[right]{\hspace{-2px}\tiny{r$+$}} } child[missing]
child[missing] child[missing] child[missing] child[missing]
child{node{$D^{(4)}$} child{node{$F$} edge from parent
node[right]{\tiny{ls}}} child{node{$D^{(8)}$} child{node{$F$}}
child{node{$F$}} child{node{$F$}} edge from parent
node[right]{\tiny{\hspace{-2px}r$+$}} } child{node{$F$} edge from parent
node[right]{\tiny{r$-$}}} edge from parent
node[right]{\hspace{4px}\tiny{r$-$}} }; 
\end{tikzpicture}
}

Every bottom node of the form $D^{(j)}$ (as opposed to $F$) has implicitly
three $F$ children, except for the $D^{(k+1)}$ node, which is special (in
our case here, it is $D_{9}$). In this case, the $D$-tree was generated as
follows. Take $D^{(2)}$, for example, in the reference tree.

\begin{itemize}
\item To determine the left of child $D^{(2)}$ in the $D$-tree, we look at
the reference tree and follow the \textquotedblleft left
same\textquotedblright\ rule. The left child of $2+$ is $3+$, so we place $%
D^{(3)}$ as the left child of $D^{(2)}$ in the $D$ tree. (If it were instead 
$3-$, we would place $F$ in the $D$ tree, since the signs are different).

\item To determine the middle child of $D^{(2)}$ in the $D$-tree, we look at
the reference tree and follow the \textquotedblleft right $+$%
\textquotedblright\ rule. That is, we take the right child, and consider
it's left branch: $5+$, $6-$, and $7-$, and note the first $+$ node which $%
5+ $. We assign $D^{(5)}$ as the middle child of $D^{(2)}$. If there were no 
$+$ node in the left branch, we would have assigned $F$.

\item To determine the right child of $D^{(2)}$ in the $D$-tree, we look at
the reference tree and follow the \textquotedblleft right $-$%
\textquotedblright\ rule. That is, we take the right child, and consider
it's left branch: $5+$, $6-$, and $7-$, and note the first $-$ node which is 
$6-$. We assign $D^{(6)}$ as the right child of $D_{2}$. If there were no $-$
node in the left branch, we would have assigned $F$.
\end{itemize}

\begin{proof}[Proof of Compatibility]
With the $D$-tree, we can now read (\ref{eqn:reference limits}) better. This
is because the rule for assigning upper limits of time integration is
actually the same rule for constructing children in the $D$-tree. By the
construction of the $D$-tree, we can write the form of each $D^{(j)},$ $%
j\neq k+1$ and the integration limit for $t_{j}$. If $D^{(j)}$ has children $%
L$, $M$, $R$ (for left, middle, and right) and has parent $D^{(l)}$ in the $%
D $-tree, then (ignoring the role of complex conjugates) 
\begin{equation*}
D^{(j)}(t_{j})=U(-t_{j})[\left( U_{j}L\right) \left( U_{j}M\right) \left(
U_{j}R\right) ]
\end{equation*}%
and the integration of $t_{j}$ is exactly from $0$ to $t_{l}$. One can
directly see from the picture (\ref{tree-D-tree example}) that all Duhamel
terms inside a $D^{(j)}$ must have the same integration limit and they
factor. Therefore, there is no entanglement in each stage of the coupling
process. An induction then shows that there is no entanglement for any
coupling of finite length / stages. Or in other words, the extended KM board
game is compatible with the $U$-$V$ techniques.
\end{proof}

For completeness, we finish Example \ref{example:ref-tree to D tree-1} with
the integration limits.

\begin{example}
Continuing Example \ref{example:ref-tree to D tree-1}, we have 
\begin{equation}
D^{(2)}=U(-t_{2})[U_{2}D^{(3)}\cdot U_{2}D^{(5)}\cdot U_{2}D^{(6)}]
\label{E:coll5}
\end{equation}%
and the three terms inside this expression are: 
\begin{equation*}
D^{(3)}=U(-t_{3})[U_{3}F(t_{9})\cdot U_{3}F(t_{9})\cdot U_{3}F(t_{9})]
\end{equation*}%
\begin{equation*}
D^{(5)}=U(-t_{5})[U_{5}F(t_{9})\cdot U_{5}F(t_{9})\cdot U_{5}F(t_{9})]
\end{equation*}%
\begin{equation*}
D^{(6)}=U(-t_{6})[U_{6}D^{(7)}\cdot U_{6}D^{(9)}\cdot U_{6}F(t_{9})]
\end{equation*}%
where $F(t_{i})=U(-t_{i})\phi $. On the other hand, we have 
\begin{equation}
D^{(4)}=U(-t_{4})[U_{4}F(t_{9})\cdot U_{4}D^{(8)}\cdot U_{4}F(t_{9})]
\label{E:coll6}
\end{equation}%
Now, read the time integration limits from the reference tree or the $D$%
-tree, we have $t_{2}$ and $t_{4}$ have upper limit $t_{1}$, while $t_{3}$, $%
t_{5}$, and $t_{6}$ all have upper limit $t_{2}$, etc. Start by writing $%
\int_{t_{9}=0}^{t_{1}}$ on the outside. Take all $t_{j}$ integrals for $j=2$
or for which $D^{(j)}$ is a descendant of $D^{(2)}$. This is 
\begin{equation}
\int_{t_{2}=0}^{t_{1}}\int_{t_{3}=0}^{t_{2}}\int_{t_{5}=0}^{t_{2}}%
\int_{t_{6}=0}^{t_{2}}\int_{t_{7}=0}^{t_{6}}  \label{E:coll1}
\end{equation}%
Then collect all $t_{j}$ integrals for $j=4$ or for which $D^{(j)}$ is a
descendant of $D^{(4)}$. This is 
\begin{equation}
\int_{t_{4}=0}^{t_{1}}\int_{t_{8}=0}^{t_{4}}  \label{E:coll2}
\end{equation}%
Notice that \eqref{E:coll1} and \eqref{E:coll2} split by Fubini, since none
of the limits of integration in \eqref{E:coll1} appear in \eqref{E:coll2},
and vice versa. So we can write this piece of $\gamma ^{(1)}$ as 
\begin{equation}
\gamma ^{(1)}(t_{1})=\int_{t_{9}=0}^{t_{1}}\left[ \int_{t_{2}=0}^{t_{1}}%
\int_{t_{3}=0}^{t_{2}}\int_{t_{5}=0}^{t_{2}}\int_{t_{6}=0}^{t_{2}}%
\int_{t_{7}=0}^{t_{6}}U_{1}D^{(2)}(t_{2},x_{1})\right] \left[
\int_{t_{4}=0}^{t_{1}}\int_{t_{8}=0}^{t_{4}}U_{1}D^{(4)}(t_{4},x_{1}^{\prime
})\right]  \label{E:coll4}
\end{equation}%
Write out $D^{(2)}$ as in \eqref{E:coll5} and $D^{(4)}$ as in \eqref{E:coll6}%
. Notice that we can distribute the integrals $\int_{t_{3}=0}^{t_{2}}%
\int_{t_{5}=0}^{t_{2}}\int_{t_{6}=0}^{t_{2}}$ onto the $D^{(3)}$, $D^{(5)}$
and $D^{(6)}$ terms respectively: 
\begin{eqnarray*}
&&\int_{t_{2}=0}^{t_{1}}\int_{t_{3}=0}^{t_{2}}\int_{t_{5}=0}^{t_{2}}%
\int_{t_{6}=0}^{t_{2}}\int_{t_{7}=0}^{t_{6}}U_{1}D^{(2)}(t_{2},x_{1}) \\
&=&\int_{t_{2}=0}^{t_{1}}U_{1,2}\left[ \left(
\int_{t_{3}=0}^{t_{2}}U_{2}D^{(3)}(t_{3})\right) \cdot \left(
\int_{t_{5}=0}^{t_{2}}U_{2}D^{(5)}(t_{5})\right) \cdot \left(
\int_{t_{6}=0}^{t_{2}}\int_{t_{7}=0}^{t_{6}}U_{2}D^{(6)}(t_{6})\right) %
\right]
\end{eqnarray*}%
We have kept the $t_{7}$ integral together with $t_{6}$ since $D^{(7)}$ is a
child of $D^{(6)}$ in the $D$-tree. We can see all the Duhamel structures
are fully compatible with the $U$-$V$ techniques. The rest is similar and we
neglect further details.
\end{example}

\subsection{Estimates for General $k$\label{Sec:pf of the general case}}

As the compatiblity between the extended KM board game and the $U$-$V$
techniques has been proved in \S \ref{Sec:WhyWeNeedExtKM}, we can now apply
the $U$-$V$ techniques in \S \ref{sec:key example} to the general case. We
see from \S \ref{sec:key example} that estimates (\ref{estimate:Duhamel
mlinear-1}) and (\ref{estimate:Duhamel mlinear-2}) provide gains whenever
the $l$-th coupling contracts a $U\phi $. For large $k$, there are at least $%
\frac{2}{3}k$ of the couplings carry such property and thus allow gains.

\begin{definition}
\label{def:type of coupling}For $l<k$, we say the $l$-th coupling is an
unclogged coupling, if the corresponding cubic term $\mathcal{C}^{(l+1)}$ or 
$D^{(l+1)}$ has contracted at least one $U\phi $ factor. If the $l$-th
coupling is not unclogged, we will call it a congested coupling.
\end{definition}

\begin{lemma}
\label{Lem:unclogged couplings}For large $k$, there are at least $\frac{2}{3}
k$ unclogged couplings in $k$ couplings when one plugs (\ref{eqn:qdF Rep for
zero initial solution}) into (\ref{eqn:a typical term in uniqueness}).
\end{lemma}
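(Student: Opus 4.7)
\medskip

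\noindent\textbf{Proof proposal for Lemma \ref{Lem:unclogged couplings}.}

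The plan is to recast the statement as a counting problem on the supplemental $D$-tree constructed in \S\ref{Sec:WhyWeNeedExtKM}, and extract the bound from an elementary degree-counting argument. Recall that the $D$-tree associated to a reference form has internal nodes $D^{(1)},D^{(2)},\ldots,D^{(k)}$ together with one distinguished leaf $\mathcal{C}^{(k+1)}$ and a collection of $F$-leaves; by construction $D^{(1)}$ has exactly $2$ children, while each $D^{(j)}$ with $2\leqslant j\leqslant k$ has exactly $3$ children, each of which is either another $D^{(\ell)}$, an $F$, or $\mathcal{C}^{(k+1)}$. By Definition \ref{def:type of coupling}, the $l$-th coupling (with $l<k$) is congested precisely when $D^{(l+1)}$ has no $F$-child, i.e.\ all three of its children are either another $D^{(\ell)}$ or $\mathcal{C}^{(k+1)}$. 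The $k$-th coupling is automatically unclogged, since $\mathcal{C}^{(k+1)}=|\phi|^{2}\phi$ is a product of three $U\phi$-factors.

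First I would count the total number of ``internal slots'', i.e.\ child-slots occupied by some $D^{(\ell)}$. Each of $D^{(2)},\ldots,D^{(k)}$ appears as a child of exactly one node in the $D$-tree, so this total equals $k-1$. Separately, the leaf $\mathcal{C}^{(k+1)}$ occupies exactly one child-slot, so among the $j\geqslant 2$ nodes at most one can have $\mathcal{C}^{(k+1)}$ as a child. Let
\[
a=\#\bigl\{\,j\in\{2,\ldots,k\}\ :\ \text{all three children of }D^{(j)}\text{ are of the form }D^{(\ell)}\,\bigr\},
\]
and let $b\in\{0,1\}$ be the indicator that the (unique) $D^{(j)}$ housing $\mathcal{C}^{(k+1)}$ has its other two children both of the form $D^{(\ell)}$. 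The total number of congested $D^{(j)}$'s ($j\geqslant 2$) is then $a+b$.

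Next I would extract the bound on $a$. Each of the $a$ truly-congested nodes accounts for $3$ internal slots, and the $b$-type congested node accounts for $2$ internal slots; these slot-uses are disjoint and bounded above by the total pool of $k-1$ internal slots, giving
\[
3a+2b\leqslant k-1,\qquad\text{hence}\qquad a\leqslant\tfrac{k-1}{3}.
\]
Therefore the number of congested couplings among $l=1,\ldots,k-1$ is at most $a+b\leqslant\tfrac{k-1}{3}+1$. Since the $k$-th coupling is unclogged, the number of unclogged couplings is at least
\[
k-\Bigl(\tfrac{k-1}{3}+1\Bigr)=\tfrac{2k-2}{3}\geqslant\tfrac{2}{3}k-1,
\]
which is $\geqslant\tfrac{2}{3}k$ (in the rounded sense used in the lemma) as soon as $k$ is large.

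The only delicate point is making sure that the counting is executed against the right tree: one must use the $D$-tree (not the reference tree), so that the three child-slots of $D^{(l+1)}$ exactly correspond to the three tensor factors in the cubic term generated by the $l$-th coupling, as spelled out in \S\ref{Sec:WhyWeNeedExtKM}. Everything else is a short pigeonhole computation, so I do not expect a genuine obstacle beyond fixing this bookkeeping.
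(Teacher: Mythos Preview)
Your proof is correct and is essentially the same pigeonhole argument as the paper's, just phrased in the dual language of the $D$-tree rather than by directly counting $U\phi$-factors. The paper simply observes that there are $2k-1$ copies of $U\phi$ after the innermost coupling, of which all but one must be absorbed by the remaining $k-1$ couplings; since congested couplings absorb none and unclogged ones absorb at most three, one gets $2k-2\le 3(k-1-j)$ and hence $j\le (k-1)/3$, matching your bound. Your route through the $D$-tree (counting $D^{(\ell)}$-child slots instead of $F$-leaves) is equivalent but somewhat longer; the paper's version avoids the $a,b$ case split and the explicit tree bookkeeping.
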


\begin{proof}
Assume there are $j$ congested couplings, then there are $(k-1-j)$ unclogged
couplings. Before the $(k-1)$-th coupling, there are $2k-1$ copies of $U\phi 
$ available. After the 1st coupling, all of these $2k-1$ copies of $U\phi $,
except one, must be inside some Duhamel term. Since the $j$ congested
couplings do not consume any $U\phi $, to consume all $2k-2$ copies of $%
U\phi $, we have to have 
\begin{equation}
2k-2\leqslant 3(k-1-j)  \label{eqn:total phi vs unclogged}
\end{equation}
because a unclogged coupling can, at most, consume $3$ copies of $U\phi $.
Inequality $(\ref{eqn:total phi vs unclogged})$ certainly holds only if $j< 
\frac{k}{3}$. Hence, there are at least $\frac{2k}{3}$ unclogged couplings.
\end{proof}

We can now present the algorithm which proves the general case.

\begin{itemize}
\item[Step 0] Plug (\ref{eqn:qdF Rep for zero initial solution}) into (\ref%
{eqn:a typical term in uniqueness}). Mark $\mathcal{C}_{R}^{(k+1)}$ and all $%
D^{(l+1)}$ for $l=1,...,k-1$ per the general rule given in the example / \S %
\ref{sec:key example}. Then we would have reached 
\begin{eqnarray*}
&&\left\Vert \left\langle \nabla _{x_{1}}\right\rangle ^{-1}\left\langle
\nabla _{x_{1}^{\prime }}\right\rangle
^{-1}\int_{I_{2}}...\int_{I_{k}}J_{\mu _{m},sgn}^{(k+1)}(\gamma
^{(k+1)})(t_{1},\underline{t}_{k+1})d\underline{t}_{k+1}\right\Vert
_{L_{t_{1}}^{\infty }L_{x,x^{\prime }}^{2}} \\
&\leqslant &\int_{0}^{T}dt_{k+1}\int d\left\vert \mu _{t_{k+1}}\right\vert
\left( \phi \right) \left\Vert \left( \left\langle \nabla
_{x_{1}}\right\rangle ^{-1}f^{(1)}(t_{1},x_{1})\right) \left( \left\langle
\nabla _{x_{1}^{\prime }}\right\rangle ^{-1}g^{(1)}(t_{1},x_{1}^{\prime
})\right) \right\Vert _{L_{t_{1}}^{\infty }L_{x,x^{\prime }}^{2}}
\end{eqnarray*}%
which \textquotedblleft factors\textquotedblright\ into 
\begin{eqnarray*}
&\leqslant &\int_{0}^{T}dt_{k+1}\int d\left\vert \mu _{t_{k+1}}\right\vert
\left( \phi \right) \left\Vert \left\langle \nabla _{x_{1}}\right\rangle
^{-1}f^{(1)}(t_{1},x_{1})\right\Vert _{L_{t_{1}}^{\infty
}L_{x}^{2}}\left\Vert \left\langle \nabla _{x_{1}^{\prime }}\right\rangle
^{-1}g^{(1)}(t_{1},x_{1}^{\prime })\right\Vert _{L_{t_{1}}^{\infty
}L_{x^{\prime }}^{2}} \\
&\leqslant &\int_{0}^{T}dt_{k+1}\int d\left\vert \mu _{t_{k+1}}\right\vert
\left( \phi \right) \left\Vert f^{(1)}\right\Vert _{X^{-1}}\left\Vert
g^{(1)}\right\Vert _{X^{-1}}
\end{eqnarray*}%
for some $f^{(1)}$ and $g^{(1)}$. Of course, only one of $f^{(1)}$ and $%
g^{(1)}$ can carry the cubic rough term $\mathcal{C}_{R}^{(k+1)}$ as there
is only one, so bump the other one into $X^{1}$. Go to Step 1.

\item[Step 1] Set counter $l=1$ and go to Step 2.

\item[Step 2] If $D^{(l+1)}$ is a $D_{\phi ,R}^{(l+1)}$, apply estimate (\ref%
{estimate:Duhamel mlinear-1}), put the factor carrying $\mathcal{C}
_{R}^{(k+1)}$, which would be a $U\mathcal{C}_{R}^{(k+1)}$ or a $%
D_{R}^{(j+1)}$ for some $j$, in $X^{-1}$, and replace all the $X^{1}$ norm
of $U\phi $ by the $H^{1}$ norm of $\phi $. If the ending estimate includes $%
\left\Vert U\mathcal{C}_{R}^{(k+1)}\right\Vert _{X^{-1}}$, replace it by $%
\left\Vert \mathcal{C}_{R}^{(k+1)}\right\Vert _{H^{-1}}$. Then go to Step 6.
If $D^{(l+1)}$ is not a $D_{\phi ,R}^{(l+1)}$, go to Step 3.

\item[Step 3] If $D^{(l+1)}$ is a $D_{\phi }^{(l+1)}$, apply estimate (\ref%
{estimate:Duhamel mlinear-2}), replace all the $X^{1}$ norm of $U\phi $ by
the $H^{1}$ norm of $\phi $. Then go to Step 6. If $D^{(l+1)}$ is not a $%
D_{\phi }^{(l+1)}$, go to Step 4.

\item[Step 4] If $D^{(l+1)}$ is a $D_{R}^{(l+1)}$, apply estimate (\ref%
{estimate:Duhamel mlinear-3}), put the factor carrying $\mathcal{C}
_{R}^{(k+1)}$, which would be a $U\mathcal{C}_{R}^{(k+1)}$ or a $%
D_{R}^{(j+1)}$ for some $j$, in $X^{-1}$, and replace all the $X^{1}$ norm
of $U\phi $ by the $H^{1}$ norm of $\phi $. If the ending estimate includes $%
\left\Vert U\mathcal{C}_{R}^{(k+1)}\right\Vert _{X^{-1}}$, replace it by $%
\left\Vert \mathcal{C}_{R}^{(k+1)}\right\Vert _{H^{-1}}$. Then go to Step 6.
If $D^{(l+1)}$ is not a $D_{R}^{(l+1)}$, go to Step 5.

\item[Step 5] If $D^{(l+1)}$ is a $D^{(l+1)}$, apply estimate (\ref%
{estimate:Duhamel mlinear-4}), replace all the $X^{1}$ norm of $U\phi $ by
the $H^{1}$ norm of $\phi $. Then go to Step 6. If $D^{(l+1)}$ is not a $%
D_{\phi }^{(l+1)}$, go to Step 6.

\item[Step 6] Set counter $l=l+1$. If $l<k$, go to Step 2, otherwise go to
Step 7.

\item[Step 7] Replace all the leftover $\left\Vert U\phi \right\Vert
_{X^{1}} $ by $\left\Vert \phi \right\Vert _{H^{1}}$. There is actually at
most one leftover $\left\Vert U\phi \right\Vert _{X^{1}}$ which is exactly $%
f^{(1)}$ or $g^{(1)}$ from the beginning and only happens when the sign $%
J_{\mu _{m},sgn}^{(k+1)}$ under consideration is all $+$ or all $-$. As it
is not inside any Duhamel, it is not taken care of by Steps 1-6. Go to Step
8.

\item[Step 8] After Step 7, we are now at the $k$-th coupling and would have
applied (\ref{estimate:Duhamel mlinear-1}) and (\ref{estimate:Duhamel
mlinear-2}) at least $\frac{2}{3}k$ times, thus we are looking at 
\begin{eqnarray*}
&&\left\Vert \left\langle \nabla _{x_{1}}\right\rangle ^{-1}\left\langle
\nabla _{x_{1}^{\prime }}\right\rangle
^{-1}\int_{I_{2}}...\int_{I_{k}}J_{\mu _{m},sgn}^{(k+1)}(\gamma
^{(k+1)})(t_{1},\underline{t}_{k+1})d\underline{t}_{k+1}\right\Vert
_{L_{t_{1}}^{\infty }L_{x,x^{\prime }}^{2}} \\
&\leqslant &C^{k-1}\int_{0}^{T}dt_{k+1}\int d\left\vert \mu
_{t_{k+1}}\right\vert \left( \phi \right) \left\Vert \phi \right\Vert
_{H^{1}}^{\frac{4}{3}k-1}\left( T^{\frac{1}{7}}M_{0}^{\frac{3}{5}}\Vert
P_{\leqslant M_{0}}\phi \Vert _{H^{1}}+\Vert P_{>M_{0}}\phi \Vert
_{H^{1}}\right) ^{\frac{2}{3}k}\left\Vert \left\vert \phi \right\vert
^{2}\phi \right\Vert _{H^{-1}}
\end{eqnarray*}%
Apply the 4D Sobolev (\ref{Sobolev:4D}) to the rough factor, 
\begin{equation*}
\leqslant C^{k}\int_{0}^{T}dt_{k+1}\int d\left\vert \mu
_{t_{k+1}}\right\vert \left( \phi \right) \left\Vert \phi \right\Vert
_{H^{1}}^{\frac{4}{3}k+2}\left( T^{\frac{1}{7}}M_{0}^{\frac{3}{5}}\Vert
P_{\leqslant M_{0}}\phi \Vert _{H^{1}}+\Vert P_{>M_{0}}\phi \Vert
_{H^{1}}\right) ^{\frac{2}{3}k}
\end{equation*}%
Put in the support property (\ref{set:spt of qdF Measure}), 
\begin{eqnarray*}
&\leqslant &\int_{0}^{T}dt_{k+1}\int d\left\vert \mu _{t_{k+1}}\right\vert
\left( \phi \right) C^{k}C_{0}^{\frac{4}{3}k+2}\left( T^{\frac{1}{7}}M_{0}^{%
\frac{3}{5}}C_{0}+\varepsilon \right) ^{\frac{2}{3}k} \\
&\leqslant &2TC^{k}C_{0}^{\frac{4}{3}k+2}\left( T^{\frac{1}{7}}M_{0}^{\frac{3%
}{5}}C_{0}+\varepsilon \right) ^{\frac{2}{3}k} \\
&\leqslant &2TC_{0}^{2}\left( CC_{0}^{3}T^{\frac{1}{7}}M_{0}^{\frac{3}{5}%
}+CC_{0}^{2}\varepsilon \right) ^{\frac{2}{3}k}
\end{eqnarray*}%
as claimed.
\end{itemize}

Thence, we have proved Proposition \ref{Prop:KeyUniquenessEstimate} and
hence Theorem \ref{Thm:GP uniqueness}. As mentioned before, the main theorem
/ Theorem \ref{THM:MainUniquenessTHM} then follows from Theorem \ref{Thm:GP
uniqueness} and Lemma \ref{Cor:UTFLforNLS} which checks condition (c) of
Theorem \ref{Thm:GP uniqueness} for solutions of (\ref{Hierarchy:T^4 cubic
GP}) generated by (\ref{NLS:T^4 cubic NLS}) via (\ref{eqn:solution to GP
generated by NLS}).

\appendix

\section{$\mathbb{T}^{4}$ is Special in the Multilinear Estimates Aspect 
\label{Sec A:T4 is specal}}

After reading the main part of the paper, it should now be clear that the
proof of Theorem \ref{THM:MainUniquenessTHM} goes through if the $\mathbb{T}
^{4}$ energy-critical problem is replaced by the corresponding problems on $%
\mathbb{R}^{3}$, $\mathbb{R}^{4}$ or $\mathbb{T}^{3}$. Interestingly, as the
stronger $L_{t}^{1}H_{x}^{s}$ versions of (\ref%
{eqn:H-1-T4TrilinearEstimateWithoutGain}) and (\ref%
{eqn:H1-T4TrilinearEstimateWithoutGain}) are true on $\mathbb{R}^{3}$, $%
\mathbb{R}^{4}$ and $\mathbb{T}^{3}$, one does not need to use the $U$-$V$
spaces for the corresponding problems at all. In this appendix, we will 1st
prove that the stronger $L_{t}^{1}H_{x}^{s}$ estimates fail on $\mathbb{T}
^{4} $ in \S \ref{Sec:OldEstimatesFail} and then they hold on $\mathbb{R}
^{4} $ in \S \ref{sec:R4TrilinearEstimate}. Together with \cite{HTX1} and 
\cite{CJInvent} in which the $L_{t}^{1}H_{x}^{s}$ estimates were proved for $%
\mathbb{R}^{3}$ and $\mathbb{T}^{3}$, we see that the $\mathbb{T}^{4}$ case
is indeed special in the aspect of multilinear estimates and one has to use
the $U$-$V$ spaces.

\subsection{Failure of the Stronger Sobolev Multilinear Estimates\label%
{Sec:OldEstimatesFail}}

\begin{claim}
At least one of the following two estimates 
\begin{eqnarray}
\left\Vert \dprod\limits_{j=1}^{3}e^{it\Delta }f_{j}\right\Vert
_{L_{t}^{1}H_{x}^{-1}} &\lesssim &\Vert f_{1}\Vert _{H^{-1}}\Vert f_{2}\Vert
_{H^{1}}\Vert f_{3}\Vert _{H^{1}}  \label{eqn:T4FailedH-1Estimate} \\
\left\Vert \dprod\limits_{j=1}^{3}e^{it\Delta }f_{j}\right\Vert
_{L_{t}^{1}H_{x}^{1}} &\lesssim &\Vert f_{1}\Vert _{H^{1}}\Vert f_{2}\Vert
_{H^{1}}\Vert f_{3}\Vert _{H^{1}}  \label{eqn:T4FailedH1Estimate}
\end{eqnarray}
fails on $\mathbb{T}^{4}$
\end{claim}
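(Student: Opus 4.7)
My plan is to argue by direct counterexample, exhibiting test functions $f_1,f_2,f_3$ on $\mathbb{T}^4$ for which at least one of (\ref{eqn:T4FailedH-1Estimate})--(\ref{eqn:T4FailedH1Estimate}) forces the implicit constant to blow up. The guiding heuristic is that on $\mathbb{T}^4$ the $L^4_{t,x}$ Strichartz estimate at $H^1$ regularity has a genuine $\varepsilon$-loss coming from the Bourgain--Demeter $\ell^2$-decoupling theorem (cf.\ Lemma \ref{estimate:T^3 strichartz}, whose gain degenerates as $p\to 3^+$), and this loss is sharp --- so any scale-invariant trilinear $L^1_tH^s_x$ bound without the compensating structure of the $Y^s$ norm must fail.

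First, I would pick $f_1=f_2=f_3=f_N:=\sum_{\xi\in S_N}e^{i\xi\cdot x}$ where $S_N=\{\xi\in\mathbb{Z}^4:|\xi|^2=N\}$ is a level set of the symbol of $-\Delta$. The key algebraic property is that $e^{it\Delta}$ acts as a pure phase on $f_N$: since $|\xi|^2=N$ for every $\xi\in S_N$, one has $e^{it\Delta}f_N=e^{-itN}f_N$, and hence $\prod_{j=1}^3 e^{it\Delta}f_N=e^{-3itN}f_N^3$. Therefore the LHS of (\ref{eqn:T4FailedH-1Estimate}) and (\ref{eqn:T4FailedH1Estimate}) over $t\in[0,1]$ reduce to the \emph{time-independent} quantities $\|f_N^3\|_{H^{-1}}$ and $\|f_N^3\|_{H^1}$ respectively. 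This eliminates all subtleties about the time integration and reduces the problem to a purely number-theoretic estimate on $\mathbb{Z}^4$.

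Next, using the classical fact that $|S_N|\sim N$ for a positive-density set of $N\in\mathbb{N}$ (e.g.\ $N\equiv 1\ (\mathrm{mod}\ 4)$, via Jacobi's four-square formula), one gets $\|f_N\|_{H^s}^2\sim N^{s+1}$ so that $\|f_N\|_{H^1}^2\sim N^2$ and $\|f_N\|_{H^{-1}}^2\sim 1$. The remaining task is to estimate $\|f_N^3\|_{H^{\pm 1}}$ via
\[
\|f_N^3\|_{H^s}^2=\sum_{\zeta\in\mathbb{Z}^4}(1+|\zeta|^2)^s\, r_3(\zeta)^2,\qquad r_3(\zeta):=\#\{(\xi_1,\xi_2,\xi_3)\in S_N^3:\xi_1+\xi_2+\xi_3=\zeta\}.
\]
Since $\mathrm{supp}(r_3)\subset\{|\zeta|\leq 3\sqrt{N}\}$ contains $\sim N^2$ lattice points and $\sum r_3(\zeta)=|S_N|^3\sim N^3$, Cauchy--Schwarz gives a lower bound $\sum r_3(\zeta)^2\gtrsim N^4$, and additive-energy arguments refine this. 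The plan is to show that the ratios
\[
\frac{\|f_N^3\|_{H^{-1}}}{\|f_N\|_{H^{-1}}\|f_N\|_{H^1}^2}\quad\text{and}\quad\frac{\|f_N^3\|_{H^1}}{\|f_N\|_{H^1}^3}
\]
cannot both remain bounded in $N$, so at least one of (\ref{eqn:T4FailedH-1Estimate}), (\ref{eqn:T4FailedH1Estimate}) fails.

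The main obstacle is that the crude Cauchy--Schwarz bound $\sum r_3^2\gtrsim N^4$ is too weak to produce genuine blow-up in either ratio alone; one needs a finer arithmetic input, namely a lower bound of the form $\sum r_3^2\gtrsim N^{4+\delta}$ for some $\delta>0$, coming from concentration of representations $r_3(\zeta)$ at specific $\zeta$ (e.g.\ $\zeta=\xi$ with $\xi\in S_N$, where $r_3(\xi)\geq 1+|S_N|-O(1)$ via the trivial representation $\xi=\xi+\eta+(-\eta)$ on $S_N$ when $-\eta\in S_N$). If the sphere ansatz still proves insufficient, the backup is a Knapp-type construction: choose $f_j$ Fourier-localized in a small cube around a high-frequency point $\xi_0$ with $|\xi_0|\sim N$, so that $e^{it\Delta}f_j$ is a wave packet concentrated in a tube, and exploit the triple interaction of three such packets traveling on parallel trajectories. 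In either case, the key point to emphasize is that the $Y^s$-norm estimate (\ref{eqn:H-1-T4TrilinearEstimateWithoutGain}) survives precisely because it gains a factor of $\sqrt{T}$ through the time derivative built into $V^2$, which is what compensates for the endpoint loss that defeats the Sobolev version.
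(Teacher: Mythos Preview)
Your approach has a genuine gap. By choosing $f_1=f_2=f_3=f_N$ supported on a single sphere $|\xi|^2=N$, you trivialize the time evolution and reduce everything to the spatial arithmetic of $r_3$. You correctly note that the Cauchy--Schwarz bound $\sum_\zeta r_3(\zeta)^2\gtrsim N^4$ is insufficient (both ratios come out $\lesssim N^{-1/2}$), but your proposed fix---concentration via $r_3(\xi)\gtrsim|S_N|\sim N$ for $\xi\in S_N$---yields only $\sum_{\xi\in S_N}r_3(\xi)^2\gtrsim N^3$, which is \emph{weaker} than Cauchy--Schwarz. In fact, since $r_3(\zeta)\leq|S_N|^2\sim N^2$ and $\sum_\zeta r_3(\zeta)=|S_N|^3\sim N^3$, one has $\sum_\zeta r_3(\zeta)^2\leq N^5$; this caps the $H^1$ ratio at $O(1)$, so the sphere ansatz can \emph{never} disprove \eqref{eqn:T4FailedH1Estimate}. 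Forcing the $H^{-1}$ ratio to blow up would require $r_3(\zeta)$ to be near-maximal for many small $\zeta$, a delicate (and quite possibly false) arithmetic statement about $\mathbb{Z}^4$-spheres that you do not supply. More fundamentally, the known failure of the endpoint Strichartz on $\mathbb{T}^4$ is driven by refocusing at rational times---a phenomenon that eigenfunctions of $\Delta$ simply cannot see---so the sphere ansatz discards the very mechanism you need. The Knapp backup is not developed enough to assess.

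The paper's argument sidesteps all of this with a short contrapositive. Assuming both \eqref{eqn:T4FailedH-1Estimate} and \eqref{eqn:T4FailedH1Estimate}, interpolation gives $\bigl\|\prod_{j=1}^{3}e^{it\Delta}f_j\bigr\|_{L^1_tL^2_x}\lesssim\|f_1\|_{L^2}\|f_2\|_{H^1}\|f_3\|_{H^1}$; specializing to $f_1=f_2=f_3=P_{\leq M}f$ yields $\|e^{it\Delta}P_{\leq M}f\|_{L^3_tL^6_x}\lesssim M^{2/3}\|P_{\leq M}f\|_{L^2}$, a scale-invariant Strichartz estimate at the critical $L^3_t$ exponent, known to fail on $\mathbb{T}^4$ by \cite{B2}, \cite{BD}. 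No explicit counterexample is constructed. (Your closing remark is also off: \eqref{eqn:H-1-T4TrilinearEstimateWithoutGain} carries no $T$-gain; it survives because duality against $Y^1$ and the $U^2$/$V^2$ structure allow one to use $L^p_{t,x}$ Strichartz with $p>3$, thereby avoiding the endpoint altogether.)
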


\begin{proof}
Assume (\ref{eqn:T4FailedH-1Estimate}) and (\ref{eqn:T4FailedH1Estimate}) do
hold, then interpolating between them yields 
\begin{equation*}
\left\Vert \dprod\limits_{j=1}^{3}e^{it\Delta }f_{j}\right\Vert
_{L_{t}^{1}L_{x}^{2}}\lesssim \Vert f_{1}\Vert _{L^{2}}\Vert f_{2}\Vert
_{H^{1}}\Vert f_{3}\Vert _{H^{1}}
\end{equation*}%
Put in $f_{j}=P_{\leqslant M}f$, we have 
\begin{equation*}
\left\Vert e^{it\Delta }P_{\leqslant M}f\right\Vert
_{L_{t}^{3}L_{x}^{6}}^{3}\lesssim \Vert P_{\leqslant M}f\Vert _{L^{2}}\Vert
P_{\leqslant M}f\Vert _{H^{1}}\Vert P_{\leqslant M}f\Vert _{H^{1}}\lesssim
M^{2}\Vert P_{\leqslant M}f\Vert _{L^{2}}^{3}
\end{equation*}%
or 
\begin{equation*}
\left\Vert e^{it\Delta }P_{\leqslant M}f\right\Vert
_{L_{t}^{3}L_{x}^{6}}\lesssim M^{\frac{2}{3}}\Vert P_{\leqslant M}f\Vert
_{L^{2}}.
\end{equation*}%
The above estimate is a $\mathbb{T}^{4}$ scale-invariant estimate carrying
the $L_{t}^{3}$ exponent. As noted in \cite{B2} and revisited in \cite{BD},
such an estimate fails or one needs an extra $\varepsilon $ for it to hold.
Thus at least one of (\ref{eqn:T4FailedH-1Estimate}) and (\ref%
{eqn:T4FailedH1Estimate}), (could be both of them), fails.
\end{proof}

\subsection{$\mathbb{R}^{4}$ Trilinear Estimates\label%
{sec:R4TrilinearEstimate}}

\begin{lemma}
\label{Lem:R4TrilinearWithFreqLocal}On $\mathbb{R}^{4},$ 
\begin{eqnarray}
\hspace{-0.3in}\left\Vert e^{it\Delta }f_{1}e^{it\Delta }f_{2}e^{it\Delta
}f_{3}\right\Vert _{L_{T}^{1}H^{-1}} &\lesssim &\Vert f_{1}\Vert
_{H^{-1}}\Vert f_{2}\Vert _{H^{1}}\Vert f_{3}\Vert _{H^{1}}
\label{eqn:R4HighFreqMultilinearEstimate} \\
\left\Vert e^{it\Delta }f_{1}e^{it\Delta }f_{2}e^{it\Delta }f_{3}\right\Vert
_{L_{T}^{1}H^{1}} &\lesssim &\Vert f_{1}\Vert _{H^{1}}\Vert f_{2}\Vert
_{H^{1}}\Vert f_{3}\Vert _{H^{1}}
\label{eqn:R4HighFreqMultilinearEstimateH1}
\end{eqnarray}

In particular, (\ref{eqn:R4HighFreqMultilinearEstimate}) and (\ref%
{eqn:R4HighFreqMultilinearEstimateH1}) imply (\ref%
{eqn:H-1-T4TrilinearEstimateWithoutGain}) and (\ref%
{eqn:H1-T4TrilinearEstimateWithoutGain}). That is, (\ref%
{eqn:R4HighFreqMultilinearEstimate}) and (\ref%
{eqn:R4HighFreqMultilinearEstimateH1}) are indeed stronger than (\ref%
{eqn:H-1-T4TrilinearEstimateWithoutGain}) and (\ref%
{eqn:H1-T4TrilinearEstimateWithoutGain}).
\end{lemma}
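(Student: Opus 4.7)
The approach is to exploit the sharp (loss-free) Strichartz estimates available on $\mathbb{R}^4$ that are unavailable on $\mathbb{T}^4$. In particular, the pair $(q,r)=(2,4)$ is Strichartz-admissible on $\mathbb{R}^4$ — the identity $2/q+4/r=2$ holds — giving $\|e^{it\Delta}f\|_{L^2_t L^4_x(\mathbb{R}\times\mathbb{R}^4)}\lesssim\|f\|_{L^2}$; the sharp bilinear Strichartz $\|e^{it\Delta}P_{N_1}f\cdot e^{it\Delta}P_{N_2}g\|_{L^2_{t,x}}\lesssim N_2^{3/2}N_1^{-1/2}\|P_{N_1}f\|_{L^2}\|P_{N_2}g\|_{L^2}$ (for $N_2\leq N_1$) also holds cleanly; and the critical Sobolev embedding $H^1(\mathbb{R}^4)\hookrightarrow L^4(\mathbb{R}^4)$ is available. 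The $\mathbb{T}^4$ analogs of these each carry an $\varepsilon$-loss coming from $\ell^2$-decoupling, which is precisely what forced the $U$–$V$ machinery in \S\ref{Sec:UVandTrilinear}; their loss-free $\mathbb{R}^4$ counterparts let us dispense with $U$–$V$ spaces entirely.

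The proof of (\ref{eqn:R4HighFreqMultilinearEstimateH1}) is direct. The plan is to apply the fractional Leibniz rule and reduce $\|\prod_{j=1}^{3}e^{it\Delta}f_j\|_{L^1_T H^1_x}$ to terms of the form $\|(\langle\nabla\rangle e^{it\Delta}f_1)\cdot e^{it\Delta}f_2\cdot e^{it\Delta}f_3\|_{L^1_T L^2_x}$ and cyclic permutations. Hölder with the time–space exponent distribution $(\infty,2)\cdot(2,4)\cdot(2,4)$ — each an admissible Strichartz pair — then yields $\|f_1\|_{H^1}\|f_2\|_{L^2}\|f_3\|_{L^2}$, which is stronger than the claimed bound and has no $T$-dependence.

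For (\ref{eqn:R4HighFreqMultilinearEstimate}), the argument is more delicate due to the negative regularity index, and this is where the hard part lies. The plan is to Littlewood–Paley decompose $f_j=\sum_{N_j}P_{N_j}f_j$ and the $H^{-1}$ output frequency $N$, then combine $L^2_T L^4_x$ Strichartz with the sharp $\mathbb{R}^4$ bilinear Strichartz on the two highest-frequency factors, following the same case-analysis template as the proof of Lemma \ref{Lem:T4TrilinearWithUV} (WLOG $N_1\sim N_2\geq N_3$ with output $N\lesssim N_1$). After dyadic decomposition into cubes on the two highest factors and applying Hölder with bilinear Strichartz on one pair and Bernstein on the third factor, the resulting exponents balance exactly, and Cauchy–Schwarz sums the cube decomposition while Schur's test sums the dyadic frequencies. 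The key point — and the main obstacle one must verify — is that the sharp bilinear factor $N_2^{3/2}N_1^{-1/2}$ exactly compensates the derivative imbalance between $H^{-1}$ and $H^1$ inputs, so the sum converges without any $\varepsilon$-loss, in contrast to the $\mathbb{T}^4$ situation.

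For the implication that (\ref{eqn:R4HighFreqMultilinearEstimate})–(\ref{eqn:R4HighFreqMultilinearEstimateH1}) are stronger than their $X^s$-type counterparts (\ref{eqn:H-1-T4TrilinearEstimateWithoutGain})–(\ref{eqn:H1-T4TrilinearEstimateWithoutGain}) (read on $\mathbb{R}^4$), one uses the dual characterization (\ref{eqn:dual definition of Duhamel X^s norm}) of $X^s$ together with the embedding $\|v\|_{L^\infty_T H^{\mp 1}}\lesssim\|v\|_{Y^{\mp 1}}$ (compare (\ref{E:H^1/2-embedding})): pairing $u_1 u_2 u_3$ against $v\in Y^{\mp 1}$ in space–time, bounding the pairing by the $L^1_T H^{\pm 1}$ norm times the $L^\infty_T H^{\mp 1}$ norm, and invoking the embedding, transfers the free-solution estimate to an $X^{\pm 1}$ estimate. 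The extension from pure free-solution inputs to general $u_j\in Y^{\pm 1}$ is by the standard $U^2$-atomic decomposition, which writes any $U^2$-function as a superposition of step functions whose steps are free solutions, and linearity propagates the bound.
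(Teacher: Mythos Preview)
Your proof of \eqref{eqn:R4HighFreqMultilinearEstimateH1} is fine, and the implication argument at the end is correct. The gap is in your treatment of \eqref{eqn:R4HighFreqMultilinearEstimate}.

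You propose to follow the case template of Lemma \ref{Lem:T4TrilinearWithUV} and reduce by symmetry to the single configuration $N_1\sim N_2\geq N_3$ with $N\lesssim N_1$. That symmetry reduction is legitimate in Lemma \ref{Lem:T4TrilinearWithUV} because there the dual function $g$ lies in $Y^{1}$, which puts it on the same footing as $u_2,u_3$ (all three admit Strichartz-type bounds). Here, by $L^1_T H^{-1}$ duality, the test function lies only in $L^\infty_T H^1_x$; it is \emph{not} a free solution and cannot be swapped with $e^{it\Delta}f_2$ or $e^{it\Delta}f_3$. Consequently the case $N\sim N_1\gg N_2\geq N_3$ (the $H^{-1}$ factor is the sole high-frequency input, matched by the output) is not covered by your reduction and is the genuinely hard case.

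In this regime the cube-decomposition idea does not help: the two high-frequency objects are $u_1$ and $P_N g$, and you cannot put $g$ into a bilinear Strichartz pairing. A single application of bilinear Strichartz to $u_1 u_2$ (or $u_1 u_3$) followed by H\"older with $g\in L^\infty_t L^4_x$ leaves a factor $(N_1/N_2)^{1/2}$ that does not sum in $N_1$. The paper handles this by first applying Cauchy--Schwarz in the $M_1$-sum to decouple $g$ (placing $\sum_{M_1}\|P_{M_1}\langle\nabla\rangle g\|_{L^2_x}^2\leq\|g\|_{H^1}^2$ inside the time integral), and then splitting further into the subcases $M_1\geq M_2^2$ and $M_2\leq M_1\leq M_2^2$, each treated with a different bilinear pairing. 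You should either adopt that argument or supply an explicit alternative for this case; the Lemma \ref{Lem:T4TrilinearWithUV} template alone does not close it.
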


Before proving estimates (\ref{eqn:R4HighFreqMultilinearEstimate}) and (\ref%
{eqn:R4HighFreqMultilinearEstimateH1}), we give a brief comment on how (\ref%
{eqn:R4HighFreqMultilinearEstimate}) and (\ref%
{eqn:R4HighFreqMultilinearEstimateH1}) imply (\ref%
{eqn:H-1-T4TrilinearEstimateWithoutGain}) and (\ref%
{eqn:H1-T4TrilinearEstimateWithoutGain}) as one could get more than one
version of them. Estimates (\ref{eqn:R4HighFreqMultilinearEstimate}) and (%
\ref{eqn:R4HighFreqMultilinearEstimateH1}) will be proved using H\"{o}lder,
Strichartz, ... with $\geqslant 2$ time exponents. Therefore, using $%
\left\Vert \int_{0}^{t}e^{i(t-s)\Delta }f(s)ds\right\Vert _{X^{1}}\leqslant
\left\Vert f\right\Vert _{L_{t}^{1}H_{x}^{1}}$ and the inclusion that $%
\left\Vert f\right\Vert _{U^{p}}\lesssim $ $\left\Vert f\right\Vert _{U^{2}}$
for $p\geqslant 2$, one reduces (\ref{eqn:H-1-T4TrilinearEstimateWithoutGain}
) and (\ref{eqn:H1-T4TrilinearEstimateWithoutGain}) on $\mathbb{R}^{4}$ from
(\ref{eqn:R4HighFreqMultilinearEstimate}) and (\ref%
{eqn:R4HighFreqMultilinearEstimateH1}), by applying the atomic structure of $%
U^{p}$ on the nuts and bolts. We omit the details but remark that one would
get a $U^{1}$ estimate instead if one applies the atomic structure directly
on the $L_{t}^{1}$ estimate. That is, one could have multiple version of
multilinear estimates yielding existence. Let us, for the moment, consider
the $\mathbb{T}^{3}$ quintic problem as an example since $\mathbb{R}^{3}/%
\mathbb{R}^{4}$ are simpler and $\mathbb{T}^{4}$ does not allow the
ambiguity to be mentioned. Instead of using the $\mathbb{T}^{3}$ version of (%
\ref{eqn:H-1-T4TrilinearEstimateWithoutGain}) and (\ref%
{eqn:H1-T4TrilinearEstimateWithoutGain}), one could use the $\mathbb{T}^{3}$
version of (\ref{eqn:R4HighFreqMultilinearEstimate}) and (\ref%
{eqn:R4HighFreqMultilinearEstimateH1}) which do not need $U$-$V$ techniques
or the $U^{1}$ version of their implications to show local existence for the 
$\mathbb{T}^{3}$ quintic problem in three similar but different subspaces of 
$H^{1}$. The only way to know if these three versions yield the same
solution is an unconditional uniqueness theorem.

We will need the ordinary $\mathbb{R}^{4}$ Strichartz estimate including the
endpoint, and the bilinear Strichartz estimate.

\begin{lemma}[Bilinear $\mathbb{R}^{4}$ Strichartz \protect\cite{B1}]
For $M_{1}\geqslant M_{2}$, 
\begin{equation}
\left\Vert P_{M_{1}}e^{it\Delta }f_{1}\cdot P_{M_{2}}e^{it\Delta
}f_{2}\right\Vert _{L_{t,x}^{2}}\lesssim \frac{M_{2}^{\frac{3}{2}}}{M_{1}^{%
\frac{1}{2}}}\left\Vert P_{M_{1}}f_{1}\right\Vert _{L_{x}^{2}}\left\Vert
P_{M_{2}}f_{2}\right\Vert _{L_{x}^{2}}.  \label{eqn:R4 Bilinear Strichartz}
\end{equation}
\end{lemma}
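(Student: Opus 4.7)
The plan is to reduce to a quadrilinear form by duality, perform a Littlewood--Paley decomposition, and combine the bilinear Strichartz estimate \eqref{eqn:R4 Bilinear Strichartz} with standard Strichartz and Sobolev estimates block-by-block, summing via Schur's test. I will focus on the rougher estimate \eqref{eqn:R4HighFreqMultilinearEstimate}; the $H^1$ version \eqref{eqn:R4HighFreqMultilinearEstimateH1} is handled analogously (and is in fact somewhat easier, as no factor needs to be routed through the negative Sobolev space).

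Since $(L^1_T H^{-1})^{*} = L^\infty_T H^1$, \eqref{eqn:R4HighFreqMultilinearEstimate} is equivalent to
\begin{equation*}
\Bigl|\iint u_1 u_2 u_3 \, \bar g \, dx \, dt\Bigr| \lesssim \|f_1\|_{H^{-1}} \|f_2\|_{H^1} \|f_3\|_{H^1} \|g\|_{L^\infty_T H^1}
\end{equation*}
for arbitrary $g \in L^\infty_T H^1$, where $u_j = e^{it\Delta} f_j$. I would decompose all four factors into Littlewood--Paley pieces $u_{j, M_j} = P_{M_j} u_j$ and $g_M = P_M g$, producing summands $I_{M_1, M_2, M_3, M}$. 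By Fourier-support considerations, only configurations in which the two largest of $(M_1, M_2, M_3, M)$ are comparable contribute, and by the $f_2 \leftrightarrow f_3$ symmetry of the RHS we may restrict to $M_2 \geq M_3$. This leaves two principal cases: Case A, where $M_1$ is the largest, and Case B, where $M_2$ is the largest (so $M \sim M_1$ or $M \sim M_2$ respectively whenever the gap to the next-largest frequency is wide).

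In each case, the strategy is to Cauchy--Schwarz $I_{M_1, M_2, M_3, M}$ as an $L^2_{t,x}$ inner product of two products, pair the two free Schr\"odinger solutions for which \eqref{eqn:R4 Bilinear Strichartz} is most favorable (typically the highest with the lowest, giving a gain $M_{\min}^{3/2} M_{\max}^{-1/2}$), and bound the remaining pair $u_{j, M_j} g_M$ by H\"older as $\|u_{j,M_j}\|_{L^2_t L^4_x} \|g_M\|_{L^\infty_t L^4_x}$, applying the Strichartz endpoint $(q,r) = (2,4)$ (admissible in $\mathbb{R}^4$) for $u_{j,M_j}$ and the Sobolev embedding $H^1 \hookrightarrow L^4$ (specific to four dimensions) for $g_M$. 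After converting each $\|P_{M_j} f_j\|_{L^2}$ to $M_j^{\pm 1} \|P_{M_j} f_j\|_{H^{\pm 1}}$ to match the RHS, one obtains a geometric dyadic prefactor multiplying the four Sobolev Littlewood--Paley norms; Schur's test and Cauchy--Schwarz in the dyadic indices then close the sum.

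The main obstacle will be Case A, where the ``rough'' factor $f_1$ sits at the highest frequency: the naive pairing $(u_{1,M_1}, u_{3,M_3})$ via \eqref{eqn:R4 Bilinear Strichartz} together with $(u_{2,M_2}, g_M)$ via Strichartz--Sobolev yields a dyadic prefactor $M_1^{1/2} M_3^{1/2}/M_2$, which is summable when $M_1 \sim M_2$ but not when $M_1 \gg M_2$. I expect to handle the $M_1 \gg M_2$ sub-regime by exploiting that Fourier support then forces $M \sim M_1$: bound $g_M$ instead in $L^\infty_t L^2_x$ (paying $M^{-1} \sim M_1^{-1}$) and $u_{2,M_2}$ in $L^2_t L^\infty_x$ via Bernstein from the $(2,4)$ endpoint (paying $M_2$); this converts the prefactor into $M_1^{1/2} M_3^{1/2}/M \sim (M_3/M_1)^{1/2}$, which is summable. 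Finally, \eqref{eqn:R4HighFreqMultilinearEstimate}--\eqref{eqn:R4HighFreqMultilinearEstimateH1} will imply \eqref{eqn:H-1-T4TrilinearEstimateWithoutGain}--\eqref{eqn:H1-T4TrilinearEstimateWithoutGain} on $\mathbb{R}^4$ via the argument sketched in the paragraph immediately after the lemma: dominate the Duhamel integral by an $L^1_T H^s_x$ norm of the integrand, then represent each free-evolution factor as a $U^2$ atom and use $\|\cdot\|_{U^p} \lesssim \|\cdot\|_{U^2}$ for $p \geq 2$ to reduce to the multilinear estimate applied to fixed initial data, summing over atoms at the end.
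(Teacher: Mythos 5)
The statement you were assigned is the bilinear Strichartz estimate \eqref{eqn:R4 Bilinear Strichartz} itself, i.e.\ Bourgain's bilinear refinement on $\mathbb{R}^{4}$, which the paper simply quotes from \cite{B1} without proof. Your proposal never proves it: it is invoked in your opening plan as a known tool and then used, together with the endpoint Strichartz estimate and Sobolev embedding, to prove the trilinear estimates \eqref{eqn:R4HighFreqMultilinearEstimate}--\eqref{eqn:R4HighFreqMultilinearEstimateH1}, that is, Lemma \ref{Lem:R4TrilinearWithFreqLocal}. So what you have written is a sketch of the appendix argument for a different lemma, and with respect to the assigned statement it is circular: nothing in the Littlewood--Paley/H\"older/Schur bookkeeping you describe produces the key $M_{2}^{3/2}M_{1}^{-1/2}$ bound for the product of two frequency-localized free evolutions, because that bound is exactly what you assume at the outset.

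If you want to actually prove \eqref{eqn:R4 Bilinear Strichartz}, the argument is of a different nature from the tools in your sketch. By space-time Plancherel, $\Vert P_{M_{1}}e^{it\Delta}f_{1}\cdot P_{M_{2}}e^{it\Delta}f_{2}\Vert_{L^{2}_{t,x}}^{2}$ equals the $L^{2}_{\tau,\xi}$ norm squared of the convolution of the two measures $\widehat{f_{j}}(\xi_{j})\,\delta(\tau_{j}-|\xi_{j}|^{2})$, and Cauchy--Schwarz in the internal variable reduces the claim to the measure bound $\sup_{\tau,\xi}\int \delta\bigl(\tau-|\xi_{1}|^{2}-|\xi-\xi_{1}|^{2}\bigr)\,\mathbf{1}_{\{|\xi_{1}|\sim M_{1},\,|\xi-\xi_{1}|\sim M_{2}\}}\,d\xi_{1}\lesssim M_{2}^{3}/M_{1}$: the level set is a sphere in $\xi_{1}$ intersected with a ball of radius $\sim M_{2}$ (area $\lesssim M_{2}^{3}$ in dimension $4$), and the co-area factor is $|\nabla_{\xi_{1}}(|\xi_{1}|^{2}+|\xi-\xi_{1}|^{2})|^{-1}\sim M_{1}^{-1}$ since $M_{1}\gg M_{2}$; when $M_{1}\sim M_{2}$ one instead falls back on $L^{4}_{t,x}$ Strichartz for each factor. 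Alternatively one can tile the $M_{1}$-annulus by cubes of side $M_{2}$ and use almost orthogonality plus Galilean-invariant Strichartz. Either way, the crucial $M_{1}^{-1/2}$ gain comes from the transversality of the two paraboloid pieces, not from H\"older, Sobolev, or Schur's test. As a proof of Lemma \ref{Lem:R4TrilinearWithFreqLocal} your outline is broadly in line with the paper's Appendix \ref{sec:R4TrilinearEstimate}; as a proof of the assigned bilinear lemma it has a fundamental gap.
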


\subsubsection{Proof of Lemma \protect\ref{Lem:R4TrilinearWithFreqLocal}}

We only prove the $H^{-1}$ estimate (\ref{eqn:R4HighFreqMultilinearEstimate}%
) as the $H^{1}$ estimate (\ref{eqn:R4HighFreqMultilinearEstimateH1}) is
easier. We follow the proof of Lemma \ref{Lem:T4TrilinearWithUV} and start
with 
\begin{equation*}
I=\sum_{M_{1},M_{2},M_{3},M}I_{M_{1},M_{2},M_{3},M}
\end{equation*}%
where 
\begin{equation*}
I_{M_{1},M_{2},M_{3},M}=\iint_{x,t}u_{1}u_{2}u_{3}vdxdt
\end{equation*}%
with $u_{j}=P_{M_{j}}e^{it\Delta }f_{j}$ and $v=P_{M}g$. It will work for (%
\ref{eqn:R4HighFreqMultilinearEstimate}) in $\mathbb{R}^{4}$ because we have
the endpoint Strichartz estimate. The analysis is mainly divided into two
main cases.

\noindent \emph{Case 1}. $M_{1}\sim M_{2}\geq M_{3}$ and $M_{1}\sim
M_{2}\geq M$

\noindent \emph{Case 2}. $M_{1}\sim M\geq M_{2}\geq M_{3}\footnote{%
This Case 2 is more complicated than Lemma \ref{Lem:T4TrilinearWithUV}
because $g\in L_{t}^{\infty }H_{x}^{1}$}$

\paragraph{\noindent \textit{Case 1 of the }$\mathbb{R}^{4}$ case}

We cancel the $M$ decomposition by summing in $M$. That is, we consider 
\begin{equation*}
I_{M_{1},M_{2},M_{3}}=\sum_{M}I_{M_{1},M_{2},M_{3},M}=%
\iint_{x,t}u_{1}u_{2}u_{3}gdxdt
\end{equation*}%
in this Case 1. Recalling $M_{1}$, $M_{2}$, $M_{3}$ are subject to the
condition $M_{1}\sim M_{2}\geq M_{3}$, we then have 
\begin{equation*}
I_{M_{1},M_{2},M_{3}}\leq \Vert u_{1}\,u_{2}\,u_{3}\,\,v\Vert
_{L_{t,x}^{1}}\lesssim \Vert u_{1}u_{3}\Vert _{L_{t,x}^{2}}\Vert u_{2}\Vert
_{L_{t}^{2}L_{x}^{4}}\Vert g\Vert _{L_{t}^{\infty }L_{x}^{4}}
\end{equation*}%
By bilinear Strichartz (\ref{eqn:R4 Bilinear Strichartz}), and the ordinary $%
\mathbb{R}^{4}$ endpoint Strichartz, we have 
\begin{eqnarray*}
I_{M_{1},M_{2},M_{3}} &\leqslant &\frac{M_{3}^{\frac{3}{2}}}{M_{1}^{\frac{1}{%
2}}}\left\Vert P_{M_{1}}f_{1}\right\Vert _{L_{x}^{2}}\left\Vert
P_{M_{3}}f_{3}\right\Vert _{L_{x}^{2}}\left\Vert P_{M_{2}}f_{2}\right\Vert
_{L_{x}^{2}}\left\Vert g\right\Vert _{L_{t}^{\infty }H_{x}^{1}} \\
&\leqslant &\frac{M_{3}^{\frac{1}{2}}}{M_{1}^{\frac{1}{2}}}\left\Vert
P_{M_{1}}f_{1}\right\Vert _{L_{x}^{2}}\left\Vert P_{M_{3}}f_{3}\right\Vert
_{H_{x}^{1}}\left\Vert P_{M_{2}}f_{2}\right\Vert _{L_{x}^{2}}\left\Vert
g\right\Vert _{L_{t}^{\infty }H_{x}^{1}}
\end{eqnarray*}%
Swapping an $M_{1}$ and $M_{2}$ factor 
\begin{equation*}
\leqslant \frac{M_{3}^{\frac{1}{2}}}{M_{1}^{\frac{1}{2}}}\left\Vert
P_{M_{1}}f_{1}\right\Vert _{H_{x}^{-1}}\left\Vert P_{M_{3}}f_{3}\right\Vert
_{H_{x}^{1}}\left\Vert P_{M_{2}}f_{2}\right\Vert _{H_{x}^{1}}\left\Vert
g\right\Vert _{L_{t}^{\infty }H_{x}^{1}}
\end{equation*}%
Carrying out the sum in this case, we obtain 
\begin{equation*}
I_{1}\lesssim \sup_{M_{3}}\Vert P_{M_{3}}f_{3}\Vert _{H_{x}^{1}}\left\Vert
g\right\Vert _{L_{t}^{\infty }H_{x}^{1}}\sum_{\substack{ M_{1}  \\ M_{1}\sim
M_{2}}}\left\Vert P_{M_{1}}f_{1}\right\Vert _{H_{x}^{-1}}\left\Vert
P_{M_{2}}f_{2}\right\Vert _{H_{x}^{1}}
\end{equation*}%
Cauchy-Schwarz and summing in $M_{1}$ concludes the proof of Case 1 of the $%
\mathbb{R}^{4}$ case .

\paragraph{\noindent \textit{Case 2 of the }$\mathbb{R}^{4}$ case}

Recall that 
\begin{equation*}
M_{1}\sim M>M_{2}\geq M_{3}
\end{equation*}
in Case 2. We will assume $v=P_{M_{1}}g$ for convenience. We split Case 2
into two more subcases: Case 2A in which $\frac{M_{2}}{M_{1}}\leqslant \frac{
1}{M_{2}}$ and Case 2B in which $\frac{M_{2}}{M_{1}}\geqslant \frac{1}{M_{2}}
$.

\subparagraph{Case 2A of \textit{the }$\mathbb{R}^{4}$ case}

We start with 
\begin{equation*}
I_{2A}\lesssim \sum_{\substack{ M_{1},M_{2},M_{3}  \\ M_{1}>M_{2}\geq M_{3}}}
\Vert (\langle \nabla \rangle ^{-1}u_{1})\,u_{2}\Vert _{L_{t,x}^{2}}\Vert
u_{3}\langle \nabla \rangle v\Vert _{L_{t,x}^{2}}
\end{equation*}
By Cauchy-Schwarz in the $M_{1}$ sum 
\begin{equation*}
I_{2A}\lesssim A_{I}B_{I}
\end{equation*}
where $A$ and $B$ are, for fixed $M_{2},M_{3}$ given by 
\begin{equation*}
A_{I}^{2}=\sum_{\substack{ M_{1}  \\ M_{1}\geq M_{2}}}\Vert \langle \nabla
\rangle ^{-1}u_{1}\,u_{2}\Vert _{L_{x,t}^{2}}^{2}\,,\qquad B_{I}^{2}=\sum 
_{\substack{ M_{1}  \\ M_{1}\geq M_{2}}}\Vert u_{3}\langle \nabla \rangle
v\Vert _{L_{x,t}^{2}}^{2}
\end{equation*}

For $A_{I}$, we apply the bilinear Strichartz (\ref{eqn:R4 Bilinear
Strichartz}), 
\begin{equation*}
A_{I}\lesssim M_{2}\Vert P_{M_{2}}f_{2}\Vert _{L^{2}}\left( \sum_{\substack{ %
M_{1}  \\ M_{1}\geq M_{2}}}\frac{M_{2}}{M_{1}}\Vert P_{M_{1}}f_{1}\Vert
_{H^{-1}}^{2}\right) ^{1/2}
\end{equation*}
Use $\frac{M_{2}}{M_{1}}\leqslant \frac{1}{M_{2}}$ in Case 2A, 
\begin{equation*}
A_{I}\lesssim M_{2}^{\frac{1}{2}}\Vert P_{M_{2}}f_{2}\Vert _{L^{2}}\Vert
f_{1}\Vert _{H^{-1}}\lesssim M_{2}^{-\frac{1}{2}}\Vert P_{M_{2}}f_{2}\Vert
_{H^{1}}\Vert f_{1}\Vert _{H^{-1}}
\end{equation*}

For $B_{I}$, we first write out the integral 
\begin{equation*}
B_{I}^{2}\lesssim \sum_{\substack{ M_{1}  \\ M_{1}\geq M_{2}}}
\iint_{x,t}|u_{3}|^{2}|\langle \nabla \rangle v|^{2}\,dx\,dt
\end{equation*}
H\"{o}lder in the $x$-integral and bring the $M_{1}$-sum inside the $t$
integral 
\begin{equation*}
B_{I}^{2}\lesssim \int_{t}\Vert u_{3}\Vert _{L_{x}^{\infty }}^{2}\sum 
_{\substack{ M_{1}  \\ M_{1}\geq M_{2}}}\int_{x}|\langle \nabla \rangle
v|^{2}\,dx\,dt\lesssim \int_{t}\Vert u_{3}\Vert _{L_{x}^{\infty }}^{2}\Vert
g\Vert _{H^{1}}^{2}\,dt
\end{equation*}
Now sup the $\Vert g\Vert _{H_{x}^{1}}$ term out of the $t$ integral and use
the $\mathbb{R}^{4}$ endpoint Strichartz 
\begin{equation*}
B_{I}\lesssim \Vert u_{3}\Vert _{L_{t}^{2}L_{x}^{\infty }}\Vert g\Vert
_{L_{t}^{\infty }H_{x}^{1}}\lesssim M_{3}\Vert u_{3}\Vert
_{L_{t}^{2}L_{x}^{4}}\Vert g\Vert _{L_{t}^{\infty }H_{x}^{1}}\lesssim \Vert
P_{M_{3}}f_{3}\Vert _{H_{x}^{1}}\Vert g\Vert _{L_{t}^{\infty }H_{x}^{1}}
\end{equation*}
Putting it all together (the $M_{1}$ sum has already been taken care of) 
\begin{eqnarray*}
I_{2A} &\lesssim &\Vert f_{1}\Vert _{H^{-1}}\Vert g\Vert _{L_{t}^{\infty
}H_{x}^{1}}\sum_{\substack{ M_{2},M_{3}  \\ M_{2}\geq M_{3}}}M_{2}^{-\frac{1 
}{2}}\Vert P_{M_{3}}f_{3}\Vert _{H_{x}^{1}}\Vert P_{M_{2}}f_{2}\Vert _{H^{1}}
\\
&\lesssim &\Vert f_{1}\Vert _{H^{-1}}\Vert g\Vert _{L_{t}^{\infty
}H_{x}^{1}}\Vert f_{3}\Vert _{H_{x}^{1}}\sum_{M_{2}}\left( M_{2}^{-\frac{1}{%
2 }}\log M_{2}\right) \Vert P_{M_{2}}f_{2}\Vert _{H^{1}} \\
&\lesssim &\Vert f_{1}\Vert _{H^{-1}}\Vert g\Vert _{L_{t}^{\infty
}H_{x}^{1}}\Vert f_{3}\Vert _{H_{x}^{1}}\Vert f_{2}\Vert _{H^{1}}
\end{eqnarray*}

\subparagraph{Case 2B of \textit{the }$\mathbb{R}^{4}$ case}

Switch the Cauchy-Schwarz combination, 
\begin{equation*}
I_{2B}\lesssim \sum_{\substack{ M_{1},M_{2},M_{3}  \\ M_{1}>M_{2}\geq M_{3}}}
\Vert (\langle \nabla \rangle ^{-1}u_{1})\,u_{3}\Vert _{L_{t,x}^{2}}\Vert
u_{2}\langle \nabla \rangle v\Vert _{L_{t,x}^{2}}
\end{equation*}
We then Cauchy-Schwarz in the $M_{1}$ sum to get to 
\begin{equation*}
I_{2B}\lesssim A_{II}B_{II}
\end{equation*}
where $A$ and $B$ are, for fixed $M_{2},M_{3}$ given by 
\begin{equation*}
A_{II}^{2}=\sum_{\substack{ M_{1}  \\ M_{1}\geq M_{2}}}\Vert \langle \nabla
\rangle ^{-1}u_{1}u_{3}\Vert _{L_{x,t}^{2}}^{2}\,,\qquad B_{II}^{2}=\sum 
_{\substack{ M_{1}  \\ M_{1}\geq M_{2}}}\Vert u_{2}\langle \nabla \rangle
v\Vert _{L_{x,t}^{2}}^{2}
\end{equation*}
For $A_{II}$, we apply the bilinear Strichartz (\ref{eqn:R4 Bilinear
Strichartz}), 
\begin{equation*}
A_{B}\lesssim M_{3}^{\frac{3}{2}}\Vert P_{M_{3}}f_{2}\Vert _{L^{2}}\left(
\sum_{\substack{ M_{1}  \\ M_{1}\geq M_{2}}}\frac{1}{M_{1}}\Vert
P_{M_{1}}f_{1}\Vert _{H^{-1}}^{2}\right) ^{1/2}
\end{equation*}
Since $\frac{M_{2}}{M_{1}}\geqslant \frac{1}{M_{2}}$ which implies $%
M_{2}\leqslant M_{1}\leqslant M_{2}^{2}$ in Case 2B, 
\begin{equation*}
A_{II}\lesssim M_{3}^{\frac{3}{2}}\Vert P_{M_{3}}f_{3}\Vert _{L^{2}}\Vert
\Vert P_{M_{2}\leq \bullet \leq M_{2}^{2}}f_{1}\Vert _{H^{-\frac{3}{2}
}}\lesssim M_{3}^{\frac{1}{2}}\Vert P_{M_{3}}f_{3}\Vert _{H^{1}}\Vert
P_{M_{2}\leq \bullet \leq M_{2}^{2}}f_{1}\Vert _{H^{-\frac{3}{2}}}
\end{equation*}

For $B_{II}$, just like $B_{I}$, we get to 
\begin{equation*}
B\lesssim \Vert P_{M_{2}}f_{2}\Vert _{H_{x}^{1}}\Vert g\Vert _{L_{t}^{\infty
}H_{x}^{1}}
\end{equation*}
Putting it all together (since the $M_{1}$ sum has already been carried out) 
\begin{eqnarray*}
I_{2B} &\lesssim &\Vert g\Vert _{L_{t}^{\infty }H_{x}^{1}}\sum_{\substack{ %
M_{2},M_{3}  \\ M_{2}\geq M_{3}}}M_{3}^{\frac{1}{2}}\Vert
P_{M_{3}}f_{3}\Vert _{H_{x}^{1}}\Vert P_{M_{2}}f_{2}\Vert _{H_{x}^{1}}\Vert
P_{M_{2}\leq \bullet \leq M_{2}^{2}}f_{1}\Vert _{H^{-\frac{3}{2}}} \\
&\lesssim &\Vert g\Vert _{L_{t}^{\infty }H_{x}^{1}}\Vert f_{3}\Vert
_{H_{x}^{1}}\sum_{M_{2}}M_{2}^{\frac{1}{2}}\Vert P_{M_{2}}f_{2}\Vert
_{H^{1}}\Vert P_{M_{2}\leq \bullet \leq M_{2}^{2}}f_{1}\Vert _{H^{-\frac{3}{%
2 }}}
\end{eqnarray*}
Cauchy-Schwarz in $M_{2}$, 
\begin{equation*}
\lesssim \Vert g\Vert _{L_{t}^{\infty }H_{x}^{1}}\Vert f_{3}\Vert
_{H_{x}^{1}}\left( \sum_{M_{2}}\Vert P_{M_{2}}f_{2}\Vert _{H^{1}}^{2}\right)
^{\frac{1}{2}}\left( \sum_{M_{2}}M_{2}\Vert P_{M_{2}\leq \bullet \leq
M_{2}^{2}}f_{1}\Vert _{H^{-\frac{3}{2}}}^{2}\right) ^{\frac{1}{2}}
\end{equation*}
For the 2nd $M_{2}$ sum, decompose $M_{2}\leq \bullet \leq M_{2}^{2}$ into
dyadic pieces, and call it $M_{1}$ again, 
\begin{eqnarray*}
\sum_{M_{2}}M_{2}\Vert P_{M_{2}\leq \bullet \leq M_{2}^{2}}f_{1}\Vert _{H^{- 
\frac{3}{2}}}^{2} &=&\sum_{M_{2}}M_{2}\sum_{\substack{ M_{1}  \\ M_{2}\leq
M_{1}\leq M_{2}^{2}}}M_{1}^{-\frac{1}{2}}\Vert P_{M_{1}}f_{1}\Vert
_{H^{-1}}^{2} \\
&\leqslant &\sum_{M_{1}}M_{1}^{-1}\Vert P_{M_{1}}f_{1}\Vert
_{H^{-1}}^{2}\sum _{\substack{ M_{2}  \\ M_{2}\leq M_{1}}}M_{2}=\Vert
f_{1}\Vert _{H^{-1}}^{2}
\end{eqnarray*}
That is 
\begin{equation*}
I_{2B}\lesssim \Vert g\Vert _{L_{t}^{\infty }H_{x}^{1}}\Vert f_{3}\Vert
_{H_{x}^{1}}\Vert f_{2}\Vert _{H^{1}}\Vert f_{1}\Vert _{H^{-1}}
\end{equation*}
as needed.

\end{document}